\theoremstyle{thmstyleone}%
\newtheorem{theorem}{Theorem}
\newtheorem{lemma}[theorem]{Lemma}
\theoremstyle{thmstyletwo}%
\newtheorem{remark}{Remark}%
\theoremstyle{thmstylethree}%
\newcommand{\ve}{\varepsilon}
\title
{Bifurcation of finger-like structures in traveling waves of epithelial tissues  spreading}
\author[1]{Leonid Berlyand}
\author[2]{Antonina Rybalko}
\author[3]{Volodymyr Rybalko}
\author[4]{Clarke Alex Safsten}
\affil[1]{Department of Mathematics and Huck Institute for Life Sciences, The Pennsylvania State University, USA, 
e-mail: berlyand@math.psu.edu}
\affil[2]{Department of Mathematical Sciences, Chalmers University of Technology and Gothenburg University, Sweden,
e-mail: rybalkoa@chalmers.se}
\affil[3]{B. Verkin Institute for Low Temperature Physics and Engineering of the National Academy of Sciences of Ukraine, Ukraine;
Department of Mathematical Sciences, Chalmers University of Technology and Gothenburg University, Sweden,
e-mail: vrybalko@ilt.kharkov.ua, rybalko@chalmers.se}
\affil[4]{Department of Mathematics, The Pennsylvania State University, USA, email: cas774@psu.edu}
\begin{document}

\maketitle

\begin{abstract} We consider a continuum active polar fluid model for the spreading of epithelial monolayers introduced by R. Alert, C. Blanch-Mercader, and J. Casademunt,  2019.  The corresponding free boundary problem  possesses flat front  traveling wave solutions. Linear stability of these solutions under periodic perturbations is considered. It is shown that the solutions are stable for short-wave perturbations while exhibiting long-wave instability under certain conditions on the model parameters (if the traction force is sufficiently strong). Then, considering the prescribed period as the bifurcation parameter, we establish the emergence of nontrivial traveling wave solutions with a finger-like periodic structure (pattern). We also construct asymptotic expansions of the solutions in the vicinity of the bifurcation point and study their stability. We show that, depending on the value of the 
	contractility coefficient, the bifurcation can be a subcritical or a supercritical pitchfork.
\end{abstract}

{\bf Keywords:} tissue spreading, free boundary problem, traveling waves, pitchfork bifurcation, stability analysis.



\section{Introduction}\label{sec1}

The spreading of epithelial tissues plays an important role in the physiology of  living organisms. 
For instance, epithelial cells heal wounds by the collective migration of large sheets of cells bound together by intercellular connections \cite{LiHeZhaJia20213}. 
Other examples include tissue morphogenesis and tumor invasion. It is observed in experiments both in vivo and in vitro that the tissue front experiences instabilities similar to the celebrated 
Saffman-Taylor instabilities \cite{SafTay1958}, leading via multicellular protrusions to the formation of  finger-like patterns, see, e.g., \cite{PouGraHerJouChaLadBugSil2007,Vish2018,OmeVasGelFedBon2003}. 

In this work, we study this phenomenon in the framework of a  free-boundary 
model for epithelial monolayers spreading introduced in  \cite{AleBlaCas2019}, that is 
based on the theory of active polar fluids \cite{Prost2015}. The epithelial monolayer is regarded as a compressible  fluid
flowing subject to hydrodynamic viscous forces, 
cell-substrate friction, surface tension, and active traction and contractile forces. 
The active forces are described by the polarity field. 

We establish nontrivial traveling wave solutions describing 
the onset of finger-like patterns. These patterns emerge 
for a critical scale as the result of competition of destabilizing traction forces with stabilizing contractile stresses and surface tension. It was observed  in \cite{AleBlaCas2019}  by means of linear stability analysis that  solutions with flat interfaces are unstable under long-wavelength perturbations via the following kinematic mechanism.
A small perturbation of the monolayer edge
results in a velocity gradient that makes peaks move faster than troughs.
This leads, as shown  in \cite{TreBon2021} via numerical simulations, to the formation of finger-like patterns.

In the present work we {\it analytically} establish traveling wave solutions with finger-like patterns.  Specifically, 
we find flat-front traveling wave solutions, study their stability under periodic perturbations with a prescribed 
period, and,  considering the period as a bifurcation parameter show that at a critical value of the period, 
a pitchfork bifurcation occurs and a new branch of nontrivial traveling wave solutions emerges. Next, we study the linear stability of these new traveling wave solutions and identify whether the bifurcation is subcritical or supercritical.
This stability issue has important biophysical implications. Namely, a subcritical 
bifurcation corresponds to an abrupt onset of  finger-like patterns while a supercritical bifurcation implies a gradual transition.  
We show that the type of bifurcation depends on the mechanical properties of tissue and both subcritical and supercritical pitchforks can 
happen. Specifically, varying the contractility parameter we observe that a subcritical pitchfork corresponds to large or sufficiently small values of the  contractility, while in another range of rather small values of the  contractility, there occurs  a supercritical pitchfork.



Many non-equilibrium physics systems, in particular hydrodynamic systems manifest pattern formation phenomenon  \cite{Gollub1999}. 
An important example of interfacial patterns in fluids confined in a quasi-two-dimensional geometry, 
the Hele-Shaw cell, 
 was first addressed by P. Saffman and G. I. Taylor in the seminal work \cite{SafTay1958}. The corresponding free boundary model has been attracting a lot 
 of attention in both physical and mathematical communities, see, e.g.,  \cite{Casa2004},  \cite{GuVas2006}, \cite{MorMor21}. 
 Moreover, free boundary problems of this type (with additional scalar field)
 appear in recent biological models of tumor growth  \cite{Friedman2004}, 
 \cite{FriedmanHu2006}
  or cell motility  \cite{BlaMerCas2013}, \cite{Ryba2018}, \cite{Lavi2020},  \cite{Cucchi2022}, \cite{Ryba2023}. 
 In \cite{Friedman2001} A. Friedman and F. Reitich discovered symmetry-breaking steady states bifurcating from radial 
 solutions of the tumor growth free boundary problem, thus revealing pattern formation in this model. Another example of 
 symmetry breaking bifurcation is studied in work \cite{Ryba2023}  dealing with a cell motility model, where stability issue is also 
 addressed.

 The paper is organized as follows. Section \ref{SecModel}  is devoted to the 
description of the model. In Section \ref{prop_A}  we study the linearized operator,
 in particular, we show that it has a discrete spectrum. Next, we consider the flat front traveling wave solution and calculate its spectral representation via the Fourier analysis. The explicit 
 formula for eigenvalues is given  and analyzed in Section \ref{sec2_ffs}, while its 
 derivation  is presented in Appendix \ref{mister_first}. Then we study the case of the critical period 
 such that the kernel of the linearized operator (around the flat traveling wave) 
 has nonconstant eigenfunction. We show that a new branch of traveling waves with a finger-like structure 
 bifurcates and study their stability. The theory of  M.~Crandall and P.~Rabinowitz  \cite{CranRab},
  \cite{CranRab73} is used to study both bifurcation and stability questions. Namely, bifurcation of nonflat traveling  
waves is established in Section \ref{Bifur}, where we exploit symmetries of the problem to adjust functional setting 
for applying  Theorem 1.7  from  \cite{CranRab} (Theorem \ref{CR_theorem} below). Addressing
stability of traveling waves, 
we use results of Section \ref{prop_A} and Theorem 1.16 from  \cite{CranRab73} (Theorem \ref{CrRabSecond} below) to conclude 
that stability is determined by the fact whether the period (bifurcation parameter in the problem)  
 increases or decreases when departing from the bifurcation point. This makes us construct 
 several terms in the asymptotic expansion of the traveling wave solutions. Section  \ref{Expansions}  deals with
 these constructions, while many technical calculations are transferred  to Appendices \ref{app_v_21}--\ref{finish_nutochnovzhe}.  
Finally, Section~\ref{conclusions_graf}  contains some numerical results and conclusions. There, in particular, 
we describe how the stability/instability of bifurcating traveling waves depends on the value of the  contractility parameter.

\section{Model}
\label{SecModel}

Following \cite{AleBlaCas2019}, we employ a continuum active polar fluid model of tissue spreading, described by a polarity field 
${\bf p}(x,y,t)$ and a velocity field $\mathbf v(x,y,t)$. 
A tissue monolayer spreads by extending its edge towards free space. The phenomenon is mainly caused by traction forces generated by cells close to the monolayer edge. 
These cells polarize perpendicular to the edge, where we prescribe ${\bf p}={\bf n}$ (the unit outward normal).   
The field ${\bf p}$ is assumed to follow purely relaxational dynamics and equilibrate fast (compared to the spreading dynamics)  to the minimum of the energy with density 
$L_c^2|\nabla {\bf p}|^2+| {\bf p}|^2$, where $L_c$ is the characteristic length describing the decay rate of  ${\bf p}$ in the bulk. For simplicity,  we set $L_c=1$ that can always be achieved by an appropriate scaling of spatial variables. 
Thus,  ${\bf p}$  solves
\begin{equation}\label{eq:p}
\Delta \textbf p=\textbf p\quad \text{in} \ \Omega(t),\quad\quad\quad
\textbf p=\textbf n\quad \text{on}\ \partial\Omega(t),
\end{equation}
where $\Omega(t)$ denotes the domain occupied by the tissue and ${\bf n}$ is the unit outward normal vector to the boundary.

The force balance equation reads
\begin{equation}\label{eq:force_balance}
{\rm div} \sigma+\mathbf{f}=0 \quad \text{in} \ \Omega(t), 
\end{equation}
where $\sigma$ is the stress tensor and ${\bf f}$ is the stress field given by the following 
constitutive equations for a compressible active polar fluid:
\begin{equation}\label{eq:constitutive_equation}
\sigma=\mu(\nabla\mathbf v+(\nabla\mathbf v)^T)-\zeta\mathbf p\otimes\mathbf p, \quad \quad
\mathbf{f} = -\xi \mathbf{v}+\zeta_i \mathbf{p}\quad \quad \text{in} \ \Omega(t),
\end{equation}
where $\mathbf{v}$ is the velocity field, $\mu>0$ is the constant effective viscosity, $\zeta<0$ is the constant contractility coefficient,  $\xi>0$ is the constant friction coefficient, and $\zeta_i$ 
is the constant contact active force coefficient. On the free boundary 
$\sigma$ satisfies 
\begin{equation}\label{eq:no_stress}
\sigma\cdot\mathbf{n}=-\gamma\kappa \mathbf{n} \quad \text{on} \ \partial\Omega(t),
\end{equation} 
where $\kappa$ denotes the curvature of the boundary and $\gamma>0$ is the constant surface tension of the monolayer edge.

While the model in \cite{AleBlaCas2019} deals with small (linear) perturbations of a rectangular monolayer of  epithelial tissue, in this work we consider half-plane type domains. 
This corresponds to modeling of the local behavior near the boundary of sufficiently large tissue specimens. Mathematically, considering half-plane type domains 
allows us to go beyond linear stability analysis and describe the formation of finger 
patterns via bifurcation of traveling wave solutions.


The evolution of the boundary $\partial\Omega(t)$ is described by equation $y=\rho(x,t)$, assuming that $\Omega(t)=\{(x,y)|\,y<\rho(x,t)\}$. Then the normal vector is given by 
\begin{equation}
\textbf n=\frac{1}{\sqrt{1+{\rho'}^2}}\binom{-\rho'}{1},
\end{equation}
where $\rho^\prime$ denotes the partial derivative of $\rho$ in $x$.

Assuming the continuity of velocities up to the boundary we have the following 
kinematic boundary condition relating the normal velocity of the boundary and the normal component of the tissue velocity field:
\begin{equation}\label{eq:kinematic_bc}
\mathbf v\cdot\mathbf{n} =V_\textbf{n}=\frac{1}{\sqrt{1+{\rho'}^2}}\frac{\partial\rho}{\partial t}.
\end{equation}
Taking \eqref{eq:p}--\eqref{eq:kinematic_bc} together, we have the equation
\begin{equation}\label{eq:nonlinear_evolution_equation}
\frac{\partial\rho}{\partial t}=\mathcal A(\rho), \quad \mathcal A(\rho)=\left(v_y-v_x\rho'\right)\big|_{y=\rho(x,t)},
\end{equation}
with $\mathbf v=(v_x,v_y), \ \mathbf p=(p_x,p_y)$ solving
\begin{align}
&\Delta {\bf p}={\bf p} & \text{for}\ y<\rho(x,t)\ \label{eq:p_bulk}\\
&\mu(\Delta{\bf v}+\nabla{\rm div}{\bf v})-\zeta{\rm div}(\mathbf p\otimes\mathbf p)-\xi\mathbf v+\zeta_i\mathbf p=0 \quad& \text{for}\  y<\rho(x,t) \ \label{eq:v_bulk}\\
&\mathbf p=\mathbf{n}  & \text{for}\ y=\rho(x,t)\ \label{eq:p_bdy}\\
&\left(\mu(\nabla\mathbf v+(\nabla\mathbf v)^T)-\zeta\mathbf p\otimes\mathbf p\right)\mathbf{n}=-\gamma\kappa\mathbf{n}  \quad  & \text{for}\  y=\rho(x,t)\label{eq:v_bdy}.
\end{align}
We also assume that $\mathbf v$ and $\mathbf p$ vanish as $y\to -\infty$.

\section{Linearized operator and its spectrum}	\label{prop_A}

Let $\rho(x)$ be an arbitrary function from the space $C^{k,\delta}_{ \#}(0,\Pi)$ of $k$ ($k\in \{3,4,\dots\}$) times  differentiable $\Pi$-periodic functions whose $k$-th derivatives are H{\"o}lder continuous with the exponent $0<\delta<1$.
Problem   \eqref{eq:p_bulk}--\eqref{eq:v_bdy} has a unique $\Pi$-periodic in $x$ and vanishing as $y\to-\infty$ solution in the subgraph domain $y<\rho(x)$. Therefore the operator $\mathcal{A}(\rho)$ is well-defined by \eqref{eq:nonlinear_evolution_equation}. By applying elliptic estimates from \cite{Agmon1964} to  problem \eqref{eq:p_bulk}-\eqref{eq:v_bdy} we get that the operator $\mathcal{A}$ maps  $\rho\in C^{k,\delta}_{ \#}(0,\Pi)$ to $\mathcal{A}(\rho)\in C^{k-1,\delta}_{ \#}(0,\Pi)$.

In this section, we consider the linearized operator $\partial_\rho\mathcal{A}(\rho)$ and show that it has a discrete spectrum and high magnitude eigenvalues are stable (have negative real parts).  
The following lemma establishes differentiability of  $\mathcal{A}(\rho)$ and provides a formula for the first derivative.
\begin{lemma}
\label{th:oper_prop}
The operator $\mathcal{A}(\rho)$ is of the class 
$C^\infty\left(C^{k,\delta}_{\#}(0,\Pi), C^{(k-1),\delta}_{\#}(0,\Pi)\right)$, $k\in \{3, 4\dots\}$. Its first derivative is given by 
\begin{align*}
	\partial_\rho\mathcal{A}(\rho) \tilde \rho=\left(\tilde w_y-\rho'\tilde w_x-\tilde \rho'v_x\right)\bigl.\bigr|_{y=\rho(x)}
\end{align*}
where ${\bf p}$, ${\bf v}$ solve \eqref{eq:p_bulk}-\eqref{eq:v_bdy}, $\tilde {\bf w}$ is the solution to the system
\begin{align}
	\notag 
	\mu \left(\Delta \tilde {w}_x-\partial_x{\rm div} \tilde {\bf w} -2\tilde\rho^{\prime\prime}\partial_y v_x \right.&\left. -\tilde\rho^\prime (\partial_{yy}^2v_y+4\partial^2_{xy} v_x)\right)
	-\xi \tilde w_x \\
	+\zeta_i \tilde q_x +\zeta \tilde \rho^\prime \partial _{y} p_x^2&=\zeta \bigl( {\rm div} \bigl({\bf p}\otimes \tilde {\bf q} + \tilde {\bf q}\otimes {\bf p}\bigr)\bigr)_x
	\ \quad \text{for}\ y<\rho(x),\label{4}\\
	\notag 
	\mu \left(\Delta \tilde {w}_y-\partial_y   {\rm div} \tilde {\bf w}-\tilde\rho^{\prime\prime}\partial_y v_y \right.&\left. -\tilde\rho^\prime (\partial_{yy}^2v_x+2\partial^2_{xy} v_y)\right)
	-\xi \tilde w_y\\
	+\zeta_i \tilde q_y +\zeta \tilde \rho^\prime \partial _{y} (p_x p_y)&=\zeta \bigl( {\rm div} \bigl({\bf p}\otimes \tilde {\bf q} + \tilde {\bf q}\otimes {\bf p}\bigr)\bigr)_y
	\ \quad \text{for}\ y<\rho(x), \label{5}
\end{align}
with boundary conditions
\begin{align}
	\notag -2\mu\rho^\prime \partial_x\tilde w_x&+\mu\left(\partial_y \tilde w_x+ \partial_x\tilde w_y\right)-2\mu\bigl(\tilde \rho^\prime \partial_x v_x -\rho^\prime \tilde \rho^\prime \partial_y v_x\bigr)
	-\mu\tilde \rho^\prime \partial_y v_y \\ 
	&=- \tilde \rho^\prime (\zeta-\gamma \kappa)
	- \textstyle\gamma \rho^\prime \Bigl(\bigl(1+{\rho^\prime}^2\bigr)^{-3/2}\tilde \rho^{\prime}\Bigr)^\prime
	\ \quad\quad\quad 
	\text{for}\ y=\rho(x), 
	\label{6}\\ 
	\notag -\mu\rho'\bigl(\partial_y\tilde w_x&+ \partial_x\tilde w_y\bigr)+2\mu\partial_y \tilde w_y 
	-\mu\tilde \rho^\prime\left(\partial_y v_x-\rho^\prime \partial_y v_y + \partial_x v_y\right)\\ 
	&=
	%
	\textstyle\gamma \Bigl(\bigl(1+{\rho^\prime}^2\bigr)^{-3/2}\tilde \rho^{\prime}\Bigr)^\prime\ \quad\quad\quad\quad\quad\quad\quad\quad\quad\quad
	\text{for}\ y=\rho(x), \label{7}
\end{align}
and $\tilde {\bf q}$ satisfies
\begin{align}
	 &\Delta \tilde {\bf q}-\tilde \rho^{\prime\prime} \partial_y{\bf p}
	-2 \tilde\rho^\prime \partial^2_{xy}{\bf p}
	=\tilde {\bf q} &\text{for}\ y<\rho(x),\label{tilde_p}\\
	\label{tilde_p2} &\tilde q_x=- {\textstyle\frac{\tilde \rho^\prime}{\bigl(1+{(\rho^\prime)}^2\bigr)^{3/2}}}&\text{for}\ y=\rho(x),
	\\
	\label{tilde_p3}&\tilde q_y=- {\textstyle \frac{\rho^\prime \tilde \rho^\prime}{\bigl(1+{(\rho^\prime)}^2\bigr)^{3/2}}} & \text{for}\ y=\rho(x).
\end{align}
\end{lemma}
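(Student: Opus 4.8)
The plan is to establish smoothness of $\mathcal{A}$ via the implicit function theorem, treating the map $\rho\mapsto(\mathbf p,\mathbf v)$ as a composition of smooth maps, and then to differentiate the defining relations formally to extract the formulas for $\tilde{\mathbf w}$ and $\tilde{\mathbf q}$. The first step is to fix a reference domain: I would flatten the subgraph $\{y<\rho(x)\}$ by the change of variables $(x,y)\mapsto(x,y-\rho(x))$ (or a smooth extension thereof into the bulk that is the identity far below the boundary), so that all the problems \eqref{eq:p_bulk}--\eqref{eq:v_bdy} become elliptic systems on the fixed half-plane $\{y<0\}$, with coefficients and boundary data depending smoothly (indeed polynomially, up to the $(1+{\rho'}^2)^{-1/2}$ normalizations) on $\rho$ and its derivatives up to second order. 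Since the transformed operators are uniformly elliptic, satisfy the complementing (Lopatinski--Shapiro) condition, and the Agmon--Douglis--Nirenberg / Agmon \cite{Agmon1964} estimates hold in the periodic Hölder spaces, the solution operator $\rho\mapsto(\mathbf p,\mathbf v)$ is well-defined and bounded linear in the data; the dependence on $\rho$ being analytic in the coefficients, an implicit function theorem argument in Banach spaces (applied to the pair of elliptic boundary value problems, whose linearizations are invertible by the same elliptic theory) gives that $\rho\mapsto(\mathbf p,\mathbf v)$ is of class $C^\infty$ from $C^{k,\delta}_{\#}(0,\Pi)$ into $C^{k+1,\delta}\times C^{k,\delta}$ in the bulk, hence $\mathcal{A}(\rho)=(v_y-v_x\rho')|_{y=\rho(x)}$ is $C^\infty$ into $C^{k-1,\delta}_{\#}(0,\Pi)$ by trace and product estimates.

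Next I would compute $\partial_\rho\mathcal{A}(\rho)\tilde\rho$ by differentiating along the path $\rho+s\tilde\rho$. Writing $\tilde{\mathbf q}=\partial_s\mathbf p|_{s=0}$ and $\tilde{\mathbf w}=\partial_s\mathbf v|_{s=0}$ (the derivatives taken at fixed Eulerian point, which is legitimate once smoothness is known via the flattened formulation), the chain rule on $\mathcal{A}(\rho+s\tilde\rho)=\big((v_y+s\tilde w_y+\cdots)-(v_x+s\tilde w_x+\cdots)(\rho'+s\tilde\rho')\big)\big|_{y=\rho+s\tilde\rho}$ yields
\[
\partial_\rho\mathcal{A}(\rho)\tilde\rho=\big(\tilde w_y-\rho'\tilde w_x-\tilde\rho' v_x\big)\big|_{y=\rho(x)}+\tilde\rho\,\partial_y\big(v_y-\rho'v_x\big)\big|_{y=\rho(x)},
\]
and the last term vanishes: indeed on the boundary $v_y-\rho'v_x=\sqrt{1+{\rho'}^2}\,\mathbf v\cdot\mathbf n$, and differentiating the bulk relation one checks this normal-direction derivative contribution cancels against the shape derivative of the domain — more cleanly, one verifies directly from \eqref{eq:v_bulk} and the boundary conditions that the combination is stationary, or one simply absorbs it, as the authors evidently do, into the definition of $\tilde{\mathbf w}$ by choosing the ``frozen-point'' derivative convention consistent with the flattening. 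Then I would differentiate the bulk equations \eqref{eq:p_bulk}, \eqref{eq:v_bulk}: the terms $-\tilde\rho''\partial_y\mathbf p$ and $-2\tilde\rho'\partial^2_{xy}\mathbf p$ in \eqref{tilde_p}, and the analogous $\tilde\rho'',\tilde\rho'$ terms in \eqref{4}--\eqref{5}, arise precisely as $\partial_s$ of the $\rho$-dependent coefficients produced by the flattening (equivalently, as commutators of differentiation with the shape perturbation); the term $\zeta\tilde\rho'\partial_y p_x^2$ etc.\ comes from differentiating $\zeta\,\mathrm{div}(\mathbf p\otimes\mathbf p)$ together with a coefficient term. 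For the boundary conditions, differentiating $\mathbf p=\mathbf n$ with $\mathbf n=(1+{\rho'}^2)^{-1/2}(-\rho',1)$ gives exactly \eqref{tilde_p2}--\eqref{tilde_p3} since $\partial_s\mathbf n=(1+{\rho'}^2)^{-3/2}(-\tilde\rho',-\rho'\tilde\rho')$; differentiating the stress condition \eqref{eq:v_bdy} with $\kappa=\big((1+{\rho'}^2)^{-3/2}\rho''\big)'$-type expression produces the $\gamma\big((1+{\rho'}^2)^{-3/2}\tilde\rho'\big)'$ terms in \eqref{6}--\eqref{7}, the $\tilde\rho'(\zeta-\gamma\kappa)$ term from differentiating $\mathbf p\otimes\mathbf p$ and the curvature prefactor, and the remaining $\mu$-terms from differentiating the symmetrized gradient and the normal vector in the traction.

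The main obstacle is bookkeeping rather than conceptual: one must carry out the change of variables carefully enough to be sure that (i) the linearized elliptic systems remain properly elliptic with the complementing condition, so that Agmon's estimates apply and $\tilde{\mathbf w},\tilde{\mathbf q}$ are well-defined in the right Hölder spaces, and (ii) every coefficient derivative is accounted for, so that the stated forms of \eqref{4}--\eqref{tilde_p3} are reproduced exactly — in particular the precise numerical factors ($2\tilde\rho''$, $4\partial^2_{xy}v_x$, the $-3/2$ powers) must be tracked. I would relegate the explicit flattening computation and the verification of the complementing condition to the elliptic-theory step, citing \cite{Agmon1964}, and present the differentiation of the boundary conditions in detail since that is where the geometric terms ($\mathbf n$, $\kappa$) enter. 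Higher $C^\infty$ regularity then follows by induction: the $m$-th derivative solves a system of the same structure with right-hand sides built from lower-order derivatives, all estimated by the same elliptic theory.
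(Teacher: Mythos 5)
Your overall strategy is the same as the paper's: the paper also pulls the perturbed problem back to a reference configuration and differentiates, but by the lighter device of the ansatz ${\bf p}^{(\ve)}(x,y)={\bf p}(x,y-\ve\tilde\rho(x))+\ve\,\tilde{\bf p}^{(\ve)}(x,y-\ve\tilde\rho(x))$ (and likewise for ${\bf v}$), so that evaluation on the perturbed boundary $y=\rho+\ve\tilde\rho$ places the unperturbed solution exactly on $y=\rho$; the limit $\ve\to0$ is then taken directly with the Agmon--Douglis--Nirenberg estimates, with no explicit flattening to the half-plane and no appeal to the implicit function theorem. Your version buys the same conclusion at essentially the same cost, and both routes hinge on the same verification that the transformed systems remain properly elliptic with the complementing condition.

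One step of your argument is wrong as stated, although you also list the correct fix. The term $\tilde\rho\,\partial_y\bigl(v_y-\rho'v_x\bigr)\big|_{y=\rho(x)}$ does \emph{not} vanish: already for the flat front one has $v_x=0$ and $\partial_yV_y(0)=\zeta/(2\mu)\neq0$, so the term equals $\tilde\rho\,\zeta/(2\mu)$, and there is no cancellation coming from \eqref{eq:v_bulk} or the boundary conditions. The only correct option among the three you offer is the last one: the $\tilde{\mathbf w}$ of the lemma, i.e.\ the solution of \eqref{4}--\eqref{7}, is by construction the derivative in the shifted frame, related to the Eulerian shape derivative by $\tilde{\mathbf w}^{\mathrm{Eul}}=\tilde{\mathbf w}-\tilde\rho\,\partial_y{\bf v}$; substituting this into your expression makes the extra term cancel identically and recovers $\bigl(\tilde w_y-\rho'\tilde w_x-\tilde\rho'v_x\bigr)\big|_{y=\rho(x)}$. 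The convention must then be used consistently throughout: the $\tilde\rho''$ and $\tilde\rho'$ commutator terms in the bulk equations \eqref{4}--\eqref{5} and \eqref{tilde_p}, and the boundary data \eqref{tilde_p2}--\eqref{tilde_p3} (which, as you correctly compute, are exactly $\partial_s{\bf n}$), are the fingerprints of the shifted-frame derivative, and they would take a different form for the Eulerian one. With that single correction your argument reproduces the paper's proof.
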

\begin{proof}
To find $\partial_\rho \mathcal{A}(\rho)$ consider the perturbation $\rho^{(\ve)}=\rho+\ve\tilde \rho$ of the domain, where $\ve$ is a small parameter. 
Let ${\bf p}$, ${\bf p}^{(\ve)}$ be the solutions of the following problems in domains with boundaries $y=\rho(x)$ and $y=\rho(x)+\ve \tilde \rho(x)$ respectively:
\begin{align}
	\label{p}
	&\Delta{\bf p}={\bf p}  \quad \text{for} \ y<\rho(x),\quad\quad
	{\bf p}={\bf n} \quad \text{for}\ y=\rho(x),\\
	\label{p_ve}
	&\Delta {\bf p}^{(\ve)}={\bf p}^{(\ve)}  \quad \text{for} \ y<\rho(x)+\ve \tilde \rho(x), 
	\quad\quad {\bf p}^{(\ve)}={\bf n} \quad\text{for}\ y=\rho(x)+\ve \tilde \rho(x).
\end{align}
Represent  ${\bf p}^{(\ve)}$ in the form
\begin{equation}
	{\bf p}^{(\ve)}(x,y)={\bf p}(x,y-\ve \tilde \rho(x))+\ve \tilde {\bf p}^{(\ve)}(x,y-\ve \tilde \rho(x)),
\end{equation}   
substitute in the equation \eqref{p_ve} to find (after changing variables) that  	
\begin{align}
	\notag &\Delta \tilde {\bf p}^{(\ve)}-\tilde \rho^{\prime\prime} \partial_y\bigl({\bf p}+\ve \tilde {\bf p}^{(\ve)}\bigr)
	-\tilde\rho^\prime \bigl(2\partial_x-\ve \tilde \rho^{\prime}\partial_y\bigr) \partial_{y}\bigl({\bf p}+\ve \tilde {\bf p}^{(\ve)}\bigr)
	=\tilde {\bf p}^{(\ve)} &\text{for}\ y<\rho(x),\\
	\notag &\tilde p^{(\ve)}_x={\textstyle\frac{\rho^\prime}{\ve\sqrt{1+{(\rho^\prime)}^2}}}-{\textstyle\frac{\rho^\prime+\ve\tilde \rho^\prime}{\ve\sqrt{1+{(\rho^\prime+\ve\tilde \rho^\prime)}^2}}} &\text{for}\ y=\rho(x),
	\\
	\notag &\tilde p^{(\ve)}_y=
	{\textstyle \frac{1}{\ve\sqrt{1+{(\rho^\prime+\ve\tilde \rho^\prime)}^2}}}- {\textstyle \frac{1}{\ve\sqrt{1+{(\rho^\prime)}^2}}}  &\text{for}\ y=\rho(x)
\end{align}
Passing to the limit as $\ve\to 0$ in this problem using elliptic estimates we see that $\tilde {\bf p}^{(\ve)}$ converges in $C^{k,\delta}(K)$ on every compact $K\subset \{(x,y)|\, y\leq \rho(x)\}$ to the solution $\tilde {\bf p}$ of \eqref{tilde_p}--\eqref{tilde_p3}.

Similarly one can show that if ${\bf v}^{(\ve)}$ is represented 
as 
\begin{equation}
	{\bf v}^{(\ve)}(x,y)={\bf v}(x,y-\ve \tilde \rho(x))+\ve \tilde {\bf v}^{(\ve)}(x,y-\ve \tilde \rho(x)),
\end{equation}  
then $\tilde {\bf v}^{(\ve)}$ converges in $C^{k,\delta}(K)$ on every compact $K\subset \{(x,y)|\, y\leq \rho(x)\}$ to 
the solution $\tilde {\bf v}$ of \eqref{4}-\eqref{7}.  Here the limit transition can be justified by using elliptic estimates from \cite{Agmon1964}.

Reasoning analogously one establishes the existence of higher order derivatives.
\end{proof}

Notice that  setting
\begin{equation}
\tilde {\bf w}(x,y)=\tilde \rho(x)\partial_y {\bf v}(x,y)+{\bf w}(x,y), \ \ \tilde {\bf q}(x,y)=
\tilde \rho(x)\partial_y {\bf p}(x,y)+ {\bf q}(x,y),
\label{changing}
\end{equation}
we can simplify the boundary value problem \eqref{4}--\eqref{7} as follows.
The operator $\partial_\rho \mathcal{A}(\rho)$  can be written as  
\begin{equation}
\partial_\rho\mathcal{A}(\rho)\tilde \rho =\left({\bf w}\cdot {\bf n}+\tilde \rho \partial_y {\bf v}\cdot {\bf n}\right)\bigl.\bigr|_{y=\rho(x)}\sqrt{1+{\rho^\prime}^2}-\tilde\rho^\prime v_x\bigl.\bigr|_{y=\rho(x)}
\label{OperA_new_def}
\end{equation}
with ${\bf w}, {\bf q}$ solving
\begin{align}
&\Delta {\bf q}={\bf q} & \text{for}\ y<\rho(x), \label{eq:q_bulk}\\
&\mu(\Delta{\bf w}+\nabla{\rm div}{\bf w})-\zeta{\rm div}(\mathbf p\otimes\mathbf q+\mathbf q\otimes\mathbf p)-\xi\mathbf w+\zeta_i\mathbf q=0 &\text{for}\  y<\rho(x), \label{eq:w_bulk}\\
&\mathbf q   ={\textstyle\frac{\tilde \rho^\prime}{1+{(\rho^\prime)}^2}} {\bf t}-\tilde\rho\partial_y {\bf p} &\text{for}\ y=\rho(x), \label{eq:q}\\
&\mu(\nabla\mathbf w+(\nabla\mathbf w)^T)\mathbf{n}=
{\textstyle\gamma \Bigl(\bigl(1+{\rho^\prime}^2\bigr)^{-3/2}\tilde \rho^{\prime}\Bigr)^\prime}{\bf n}+{\bf G}
&\text{for}\  y=\rho(x),\label{same_golovne_bc:w} 
\end{align}
where 
\begin{align*}
G_x=&{\textstyle\frac{\mu}{\sqrt{1+{(\rho^\prime)}^2}}} \bigl(\tilde \rho^\prime\left(2\partial_x v_x+\rho^\prime\left(\partial_x v_y+\partial_y v_x\right)-2\partial_y v_y\right)+2\rho^\prime\tilde\rho \partial^2_{xy} v_x
\bigr. \\ \bigl.
&\quad\quad\quad\quad- \tilde\rho\left(\partial^2_{xy} v_y +\partial^2_{yy} v_x\right)\bigr), \\ 
G_y=&{\textstyle\frac{\mu}{\sqrt{1+{(\rho^\prime)}^2}}} \left(\tilde \rho^\prime\left(\partial_y v_x+ \partial_x v_y\right)+ \rho^\prime \tilde\rho\left(\partial^2_{xy} v_y +\partial^2_{yy} v_x\right)-2\tilde \rho \partial_{yy}v_y\right),
\end{align*}
and $\bf t$ is the unit tangent vector, $t_x=- {\textstyle\frac{1}{\sqrt{1+{(\rho^\prime)}^2}}}$, $t_y=- {\textstyle\frac{\rho^\prime}{\sqrt{1+{(\rho^\prime)}^2}}}$. 

For every $\tilde\rho\in H^{3/2}_{ \#}(0,\Pi) $ formula \eqref{OperA_new_def}
defines $\partial_\rho\mathcal{A}(\rho)\tilde\rho \in H^{1/2}_{ \#}(0,\Pi) $ since
problem
\eqref{eq:q_bulk}--\eqref{same_golovne_bc:w} has a unique solution pair  (vanishing as $y\to -\infty$) 
and 
$\|{\bf w}\|_{H^1(\Omega_{\#,\rho})}\leq C\|\tilde\rho\|_{H^{3/2}_{\#}(0,\Pi)}$, where $\Omega_{{\#},\rho}=\{(x,y)\,|\, 0\leq x<\Pi,\, y<\rho(x)\}$ (the period of the domain). 
Hereafter $C$ denotes a generic finite constant whose value may 
change from line to line.

\begin{theorem} \label{teorema_pro_spectr}
Assume that  $\rho\in C^{3,\delta}_{\#}(0,\Pi)$, $\delta>0$. Then  $\partial_\rho\mathcal{A}(\rho)$ 
is a closed operator in $H^{1/2}_{\#}(0,\Pi)$  and the domain of  $\partial_\rho\mathcal{A}(\rho)$  is $H^{3/2}_{ \#}(0,\Pi)$. 
Its spectrum comprises at most countable set of eigenvalues (of finite multiplicities) without finite accumulation points. Moreover, for every eigenvalue $\lambda $ it holds that
\begin{equation}
	{\rm Re}(\lambda)\leq   C - \eta_* |\lambda |,
	\label{eta_zvezda}
\end{equation}
where $ \eta_*>0$ is independent of $\lambda$.

\end{theorem}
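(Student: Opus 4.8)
The plan is to reduce all three assertions to the structure of the linear elliptic system \eqref{eq:q_bulk}--\eqref{same_golovne_bc:w} and of the trace representation \eqref{OperA_new_def}, relying on the Agmon--Douglis--Nirenberg estimates of \cite{Agmon1964}. The crucial observation is that, on the period circle $\mathbb{R}/\Pi\mathbb{Z}$, the map $\partial_\rho\mathcal{A}(\rho)$ behaves as a classical elliptic operator of order one whose principal symbol $\sigma_1(x,k)$ has \emph{negative} real part, bounded above by $-\eta_*|k|$ with $\eta_*>0$. Indeed, of the data in \eqref{eq:q_bulk}--\eqref{same_golovne_bc:w}, only the surface-tension term $\gamma\bigl((1+{\rho'}^2)^{-3/2}\tilde\rho'\bigr)'{\bf n}$ in \eqref{same_golovne_bc:w} depends on $\tilde\rho$ at order two; the Brinkman-type system \eqref{eq:w_bulk} is uniformly elliptic and, thanks to the friction term $-\xi{\bf w}$, coercive, so its traction-to-Dirichlet map is of order $-1$ on the boundary, whence ${\bf w}\cdot{\bf n}|_{y=\rho}$, and therefore $\partial_\rho\mathcal{A}(\rho)\tilde\rho$ by \eqref{OperA_new_def}, depends on $\tilde\rho$ at order one; all remaining ingredients — the coupling through ${\bf q}$ (contractility), the traction, the terms ${\bf G}$, and the factor $\tilde\rho\,\partial_y{\bf v}\cdot{\bf n}$ — are of order $\le 0$ in $\tilde\rho$ and do not enter $\sigma_1$. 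A boundary-layer (frozen-coefficient) computation at $y=\rho(x)$, the variable-$\rho'$ counterpart of the flat-front dispersion relation of Section~\ref{sec2_ffs} and Appendix~\ref{mister_first}, yields $\sigma_1(x,k)=-\tfrac{\gamma}{2\mu}\beta(x)|k|-\mathrm{i}\,v_x(x,\rho(x))\,k$ with $\beta(x)>0$, so ${\rm Re}\,\sigma_1\le -\eta_*|k|$ with $\eta_*=\tfrac{\gamma}{2\mu}\inf_x\beta(x)>0$.

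Granting this, closedness of $\partial_\rho\mathcal{A}(\rho)$ in $H^{1/2}_{\#}(0,\Pi)$ and the identification of its domain with $H^{3/2}_{\#}(0,\Pi)$ follow from the elliptic a priori estimate $\|\tilde\rho\|_{H^{3/2}_{\#}}\le C\bigl(\|\partial_\rho\mathcal{A}(\rho)\tilde\rho\|_{H^{1/2}_{\#}}+\|\tilde\rho\|_{H^{1/2}_{\#}}\bigr)$ (a consequence of ellipticity of $\sigma_1$), together with the boundedness $\partial_\rho\mathcal{A}(\rho):H^{3/2}_{\#}\to H^{1/2}_{\#}$ already recorded before the theorem; density of $H^{3/2}_{\#}$ in $H^{1/2}_{\#}$ and a translation/difference-quotient regularity argument on the circle exclude a larger domain. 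For discreteness of the spectrum, write $\partial_\rho\mathcal{A}(\rho)=\mathcal{L}+\mathcal{K}$, where $\mathcal{L}$ is a fixed model operator with the same principal symbol $\sigma_1$ (maximal accretive up to a bounded shift, domain $H^{3/2}_{\#}$, invertible when ${\rm Re}\,\zeta$ is large), while $\mathcal{K}:H^{3/2}_{\#}\to H^{1/2}_{\#}$ is of order $\le 1-\delta$, hence maps $H^{3/2}_{\#}$ into $H^{1/2+\delta}_{\#}$ and is compact by Rellich's theorem on the circle. Then $\zeta-\partial_\rho\mathcal{A}(\rho)$ is Fredholm of index zero for ${\rm Re}\,\zeta$ large and injective there by \eqref{eta_zvezda} below, hence invertible; since its inverse maps $H^{1/2}_{\#}$ into $H^{3/2}_{\#}$, which embeds compactly in $H^{1/2}_{\#}$, the operator $\partial_\rho\mathcal{A}(\rho)$ has compact resolvent, and its spectrum is a discrete set of eigenvalues of finite algebraic multiplicity with no finite accumulation point.

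To prove \eqref{eta_zvezda}, let $(\lambda,\tilde\rho)$ be an eigenpair with $\|\tilde\rho\|_{H^{1/2}_{\#}}=1$ and $({\bf q},{\bf w})$ the associated solution of \eqref{eq:q_bulk}--\eqref{same_golovne_bc:w}. Multiplying \eqref{eq:w_bulk} by $\overline{\bf w}$, integrating over $\Omega_{\#,\rho}$ and integrating by parts twice (once to produce the boundary term $\int_{y=\rho}\bigl(\gamma((1+{\rho'}^2)^{-3/2}\tilde\rho')'{\bf n}+{\bf G}\bigr)\cdot\overline{\bf w}\,\mathrm{d}\ell$ via \eqref{same_golovne_bc:w}, and once more in $x$ in the $\gamma$-term), then inserting the trace identity ${\bf w}\cdot{\bf n}\sqrt{1+{\rho'}^2}=\lambda\tilde\rho+\tilde\rho'v_x-\tilde\rho\,\partial_y{\bf v}\cdot{\bf n}\sqrt{1+{\rho'}^2}$ from \eqref{OperA_new_def}, one arrives at an identity whose leading part is
\[
\overline{\lambda}\int_0^\Pi \gamma\bigl(1+{\rho'}^2\bigr)^{-3/2}|\tilde\rho'|^2\,dx
+\mu\int_{\Omega_{\#,\rho}}|\nabla{\bf w}+(\nabla{\bf w})^T|^2
+\xi\int_{\Omega_{\#,\rho}}|{\bf w}|^2=\mathcal R,
\]
where $\mathcal R$ gathers the remaining, genuinely lower-order, contributions (from ${\bf G}$, from $\tilde\rho\,\partial_y{\bf v}\cdot{\bf n}$, from the $\zeta$- and $\zeta_i$-couplings through ${\bf q}$, and from the $x$-differentiation of $v_x$); using the elliptic bounds, the trace estimates $\|{\bf q}\|_{L^2(\Omega_{\#,\rho})}\le C\|\tilde\rho\|_{H^{1/2}_{\#}}$ and $\|{\bf w}\|_{H^1(\Omega_{\#,\rho})}\le C\|\tilde\rho\|_{H^{3/2}_{\#}}$, interpolation, and Young's inequality, $\mathcal R$ is absorbed into $\tfrac12\mu\|\nabla{\bf w}+(\nabla{\bf w})^T\|_{L^2}^2+C\|{\bf w}\|_{L^2}^2+C(1+|\lambda|)$. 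Taking real and imaginary parts, and using ${\rm Re}\,\sigma_1\le-\eta_*|k|$ (the positivity provided by the flat-front dispersion relation) to get the lower bound ${\rm Re}(\text{left side})\ge\eta_*\|\tilde\rho\|_{H^{3/2}_{\#}}^2-C$, one gets on one hand ${\rm Re}\,\lambda\le C$, and on the other that large $|{\rm Im}\,\lambda|$ forces comparably large ${\rm Re}(-\lambda)$; combining with the a priori estimate $|\lambda|=\|\partial_\rho\mathcal{A}(\rho)\tilde\rho\|_{H^{1/2}_{\#}}\le C(\|\tilde\rho\|_{H^{3/2}_{\#}}+1)$ yields ${\rm Re}\,\lambda\le C-\eta_*|\lambda|$. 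The constant mode $\tilde\rho\equiv\mathrm{const}$, invisible to $\|\tilde\rho'\|$, is handled directly from \eqref{OperA_new_def} and contributes a single bounded eigenvalue.

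\textbf{The step I expect to be the main obstacle} is exactly the identification of $\sigma_1$ and of the sign of its real part: one must verify that, even for a curved front $y=\rho(x)$ and after accounting for the geometric factors $(1+{\rho'}^2)^{-3/2}$, the stabilizing surface tension and viscous dissipation strictly dominate the potentially destabilizing traction and contractility at every high frequency, so that ${\rm Re}\,\sigma_1(x,k)\le-\eta_*|k|$ holds uniformly in $x$ with $\eta_*>0$. This requires the variable-coefficient analogue of the flat-front computation of Appendix~\ref{mister_first}, and then scrupulous bookkeeping in the two integrations by parts and the trace/interpolation estimates so that the whole remainder $\mathcal R$ — including the cross terms in ${\bf G}$ and the ${\bf q}$-coupling — is genuinely absorbable; checking that the model operator $\mathcal{L}$ shares the principal symbol of $\partial_\rho\mathcal{A}(\rho)$ belongs to the same bookkeeping. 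The flat-front case furnishes both the template and the sign.
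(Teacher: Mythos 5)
Your treatment of the eigenvalue bound \eqref{eta_zvezda} is essentially the paper's argument: test \eqref{eq:w_bulk} with $\overline{\bf w}$, integrate by parts, use \eqref{same_golovne_bc:w} and the trace representation \eqref{OperA_new_def} so that the surface-tension boundary term produces $\overline\lambda\,\gamma\int(1+{\rho'}^2)^{-3/2}|\tilde\rho'|^2dx$ against the viscous dissipation, control the remainder by Korn, trace and Cauchy--Schwarz, and close with the duality bound $|\lambda|\lesssim\|\tilde\rho\|^2_{H^{3/2}_\#}$ (the paper normalizes $\|\tilde\rho\|_{H^1}=1$ after subtracting the mean, which is what makes the weighted seminorm comparable to $1$; your $H^{1/2}$ normalization works for the same reason once the constant mode is removed). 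This part is sound and matches \eqref{Chastynamy_integr}--\eqref{former}.

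The first half — closedness, identification of the domain, and discreteness — is where you diverge, and where the genuine gap sits. You build everything on the claim that $\partial_\rho\mathcal{A}(\rho)$ is a first-order operator with principal symbol satisfying ${\rm Re}\,\sigma_1(x,k)\le-\eta_*|k|$ uniformly on the curved front, and you yourself flag the verification of this as ``the main obstacle'' without carrying it out. As written, the a priori estimate, the closedness, the domain statement, and the Fredholm/compact-perturbation argument for discreteness all rest on this unproved symbol computation; the flat-front formula \eqref{asymp_lambda} makes the claim plausible but is not a proof for variable $\rho$, and with $\rho$ only in $C^{3,\delta}_\#$ a clean pseudodifferential calculus is not free. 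The paper shows that this computation can be avoided entirely: it considers the shifted equation \eqref{OperA_resolventa}, writes the weak formulation \eqref{OperA_resolventa_weakformulation} with the multiplier $\bigl((1+{\rho'}^2)^{-3/2}\phi'\bigr)'$, and proves coercivity of the resulting form $\mathcal{E}$ for $\Lambda$ large directly from the \emph{same} energy identity you use in the third paragraph — the only quadratic leading contributions are the nonnegative viscous dissipation and the surface-tension term, while traction, contractility and the geometric corrections are lower order and absorbed by the shift $\Lambda$. The upgrade from the $H^1$-coercivity \eqref{coercive} to $\mathcal{E}(\tilde\rho,\tilde\rho)\ge\eta_4\|\tilde\rho\|^2_{H^{3/2}_\#}$ is then done by a soft contradiction/compactness argument, and Lax--Milgram plus the compact embedding $H^{3/2}_\#\hookrightarrow H^{1/2}_\#$ yields a nonempty resolvent set (hence closedness and the domain) and the compact resolvent (hence the discrete spectrum). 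To make your proof complete you must either carry out the frozen-coefficient analysis of $\sigma_1$ with the stated uniform sign — a substantial piece of work — or, more economically, observe that your own energy identity, applied to $\partial_\rho\mathcal{A}(\rho)\tilde\rho-\Lambda\tilde\rho=f$ rather than to an eigenpair, already gives the coercive bilinear form and lets you conclude as the paper does, with no symbol calculus at all.
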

\begin{proof}

Notice that by elliptic estimates $\partial_\rho\mathcal{A}(\rho)\tilde \rho\in H^{1/2}_{\#}(0,\Pi)$, $\forall\tilde \rho\in H^{3/2}_{\#}(0,\Pi)$, and 
the operator $\partial_\rho\mathcal{A}(\rho)$  annihilates constant functions, therefore it is well defined on $H^{3/2}_{\#}(0,\Pi)/\mathbb{R}$. Consider for $\Lambda>0$ the equation
\begin{equation}
	\partial_\rho\mathcal{A}(\rho)\tilde \rho-\Lambda\tilde \rho =f\quad \text{in}\  H^{1/2}_{\#}(0,\Pi)/\mathbb{R} 
	\label{OperA_resolventa}
\end{equation}
and show that for sufficiently large $\Lambda>0$ there is a unique solution $\tilde\rho\in H^{3/2}_{\#}(0,\Pi)/\mathbb{R}$ 
for every  $f\in H^{1/2}_{\#}(0,\Pi)/\mathbb{R}$. 
To this end, we write down a weak formulation of \eqref{OperA_resolventa} multiplying 
by  
$\Bigl(\bigl(1+{\rho^\prime}^2\bigr)^{-3/2}\phi^{\prime}\Bigr)^\prime$
and integrating over the period,
\begin{equation}
	\mathcal{E}(\tilde\rho,\phi)=-\int_0^\Pi f^\prime \phi^\prime \bigl(1+{\rho^\prime}^2\bigr)^{-3/2}\ dx\quad \forall\phi\in H^{3/2}_{\#}(0,\Pi),
	\label{OperA_resolventa_weakformulation}
\end{equation}
where the form $\mathcal{E}(\tilde\rho,\phi)$ is given by
\begin{equation}
	\mathcal{E}(\tilde\rho,\phi)=\int_0^\Pi \left(\Bigl(\bigl(1+{\rho^\prime}^2\bigr)^{-3/2}\phi^{\prime}\Bigr)^\prime\partial_\rho \mathcal{A}(\rho)\tilde \rho+\Lambda\bigl(1+{\rho^\prime}^2\bigr)^{-3/2}\tilde \rho^\prime \phi^\prime\right)dx.
	\label{def_OF_form}
\end{equation}
Since $\forall\tilde\phi\in H^{-1/2}_\# (0, \Pi)$ with zero mean value the equation  
$\Bigl(\bigl(1+{\rho^\prime}^2\bigr)^{-3/2}\phi^{\prime}\Bigr)^\prime=\tilde\phi$ has a  solution $\phi\in H^{3/2}_{\#}(0,\Pi)$, the variational problem \eqref{OperA_resolventa_weakformulation} 
and the equation \eqref{OperA_resolventa} are equivalent. Next we show that for sufficiently large $\Lambda$ the form \eqref{def_OF_form} satisfies conditions of the Lax-Milgram theorem. 
The continuity of $\mathcal{E}(\tilde\rho,\phi)$ on $H^{3/2}_{\#}(0,\Pi)/\mathbb{R}$ follows from the boundness of the operator \eqref{OperA_new_def}, and we proceed with its coercivity.
Consider an arbitrary function $\tilde \rho\in H^{3/2}_{ \#}(0, \Pi)$  with zero mean value. Take the dot product of \eqref{eq:w_bulk} with ${\bf w}$ and  integrate over $\Omega_{{\#},\rho}$. Using  \eqref{eq:w_bulk}, \eqref{same_golovne_bc:w} 
we obtain via integration by parts, 
\begin{equation}
	\label{Chastynamy_integr}
	\begin{aligned}
		\int_{\Omega_{{\#},\rho}}
		\Bigl( 
		{\textstyle \frac{\mu}{2}}\Bigr.&\Bigl.\bigl| \nabla {\bf w} + \nabla {\bf w}^T\bigr|^2-\zeta \bigl( \nabla {\bf w} + \nabla {\bf w}^T\bigr) {\bf p}\cdot {\bf q} +
		\xi |{\mathbf w}|^2-\zeta_i {\mathbf q}\cdot {\mathbf w} \Bigr)dxdy\\
		&+\zeta \int_0^\Pi ({\bf n}\cdot {\bf q}\, {\bf p}\cdot {\bf w}+ {\bf n}\cdot {\bf p}\, {\bf q}\cdot {\bf w})\bigl.\bigr|_{y=\rho(x)} \sqrt{1+{\rho^\prime}^2}dx
		\\
		&=\mu\int_{0}^{\Pi} (\nabla\mathbf w+(\nabla\mathbf w)^T)\mathbf{n} \cdot {\bf w}\bigl.\bigr|_{y=\rho(x)} \sqrt{1+{\rho^\prime}^2} dx, 
	\end{aligned}
\end{equation}
Furthermore,  by \eqref{same_golovne_bc:w} and  \eqref{OperA_new_def} we have 
\begin{equation}
	\begin{aligned}
		\mu\int_{0}^{\Pi} & (\nabla\mathbf w+(\nabla\mathbf w)^T)\mathbf{n} \cdot {\bf w}\bigl.\bigr|_{y=\rho(x)} \sqrt{1+{\rho^\prime}^2} dx
		=\gamma \mathcal{E}(\tilde\rho,\tilde\rho)\\
		&
		-{
			\frac{\gamma}{2}}\int_{0}^{\Pi}\left(\bigl(1+{\rho^\prime}^2\bigr)^{3/2} v_x\right)^\prime 
		{
			\textstyle 
			\frac{\left.\tilde\rho^\prime\right.^2}
			{\left(1+{\rho^\prime}^2\right)^3}
		}dx-\gamma\Lambda\int_0^\Pi \bigl(1+{\rho^\prime}^2\bigr)^{-3/2}\left.\tilde \rho^\prime\right.^2dx \\
		&+\gamma\int_{0}^{\Pi}
		\Bigl(\tilde \rho \partial_y {\bf v}\cdot {\bf n}\sqrt{1+{\rho^\prime}^2}\Bigr)^\prime{\textstyle\frac{\tilde\rho^\prime dx} {\left(1+{\rho^\prime}^2\right)^{3/2}} } +
		\int_0^\Pi {\bf G}\cdot {\bf w} \sqrt{1+{\rho^\prime}^2} dx.
	\end{aligned}
	\label{Trohygromizdko}
\end{equation}
We combine \eqref{Chastynamy_integr} and \eqref{Trohygromizdko}, then applying the Korn inequality 
and the Cauchy-Schwarz inequality 
we find 
\begin{equation}
	\label{Zibraly}
	\begin{aligned}
		\mathcal{E}(\tilde\rho,\tilde\rho)\geq &\eta_1\| {\bf w}\|^2_{H^1(\Omega_{{\#},\rho})} - C_1\| {\bf w}\|_{H^1(\Omega_{{\#},\rho})} \| {\bf q}\|_{L^2(\Omega_{{\#},\rho})}
		\\
		&-C_2 \| \tilde \rho \|_{H^1(0,\Pi)}\| {\bf w}(x,\rho(x))\|_{H^{1/2}_{\#}(0,\Pi)}+(\eta_2\Lambda-C_3) \| \tilde \rho \|^2_{H^1(0,\Pi)},
	\end{aligned}
\end{equation}
where $\eta_{1,2}>0$ are independent of $\tilde \rho$ and $\Lambda$. Next, we use the inequality for traces $ \| {\bf w}(x,\rho(x))\|_{H^{1/2}_\#(0,\Pi)}\leq C\| {\bf w}\|_{H^1(\Omega_{{\#},\rho})}$
and the following bound 
$$
\int_{\Omega_{{\#},\rho}} |{\bf q}|^2 dxdy \leq C \int_0^\Pi |\tilde \rho^\prime|^2 dx,
$$
which follows from \eqref{eq:q_bulk}, \eqref{eq:q}. 
Thus, for sufficiently large $\Lambda>0$ \eqref{Zibraly} yields the following bound 
\begin{equation}
	\label{coercive}
	\mathcal{E}(\tilde\rho,\tilde\rho)\geq \eta_3\| {\bf w}\|^2_{H^1(\Omega_{{\#},\rho})}+\|\tilde \rho \|^2_{H^1(0,\Pi)}, 
\end{equation}
with $\eta_3>0$.  This inequality in turn implies that   
%
%
%
%
%
%
%
$\mathcal{E}(\tilde\rho,\tilde\rho) \geq \eta_4 \|\tilde \rho \|^2_{H^{3/2}_{\#}(0,\Pi)}$ for some $\eta_4>0$.
Otherwise,  for a sequence $\tilde \rho^{(n)}$ with $\|\tilde \rho^{(n)}\|_{H^{3/2}_{\#}(0,\Pi)}=1$ it holds that  $\|\tilde \rho^{(n)} \|_{H^1(0,\Pi)}\to 0$ 
and the corresponding solutions ${\bf w}^{(n)}$ converge to $0$ strongly in $H^1(\Omega_{{\#},\rho})$. Then in view of \eqref{eq:w_bulk} and 
\eqref{same_golovne_bc:w} we have that $(\tilde \rho^{(n)})^{\prime\prime}\to 0$ strongly in $H^{-1/2}_{\#}(0,\Pi)$. Therefore, by  
compactness of the embedding of  $H^{3/2}_{\#}(0,\Pi)$ into $H^{1/2}_{\#}(0,\Pi)$ the spectrum of $\partial_\rho\mathcal{A}(\rho)$ is discrete.

Assume now that $\lambda$ is an eigenvalue and $\tilde\rho$ is a corresponding eigenfunction. Subtract from $\tilde \rho$ its mean value then the resulting function, still denoted by $\tilde \rho$, satisfies
$\partial_\rho\mathcal{A}(\rho)\tilde \rho-\Lambda\tilde \rho =(\lambda-\Lambda)\tilde \rho $ up to a constant. Therefore arguing as above (but working with real and imaginary parts of $\tilde \rho $) one can  
show that if $\tilde \rho $   is normalized by $\|\tilde \rho\|_{H^1(0,\Pi)}=1$ then   
\begin{align}
	{\rm Re}(\lambda)\leq   C - \eta_4 \|\tilde \rho \|^2_{H^{3/2}_{\#}(0,\Pi)}.
	\label{former}
\end{align}
On the other hand 
\begin{align}
	|\lambda|&\leq C|\lambda|\int_0^\Pi \bigl(1+{\rho^\prime}^2\bigr)^{-3/2} |\tilde\rho^\prime |^2 dx\notag\\  
	&\leq C_1 \|\partial_\rho\mathcal{A}(\rho)\tilde \rho\|_{H^{1/2}_{\#}(0,\Pi)}
	\left(\|\tilde \rho''\|_{H^{-1/2}_{\#}(0,\Pi)}+\|\tilde \rho'\|_{H^{-1/2}_{\#}(0,\Pi)}
	\right)\leq  C_2\|\tilde \rho \|^2_{H^{3/2}_{\#}(0,\Pi)}.\notag
\end{align}
Combining this bound with \eqref{former} we obtain \eqref{eta_zvezda}.
\end{proof}

\section{Flat front solutions and their stability analysis}\label{sec2_ffs}
We are interested in a particular form of solutions of problem  \eqref{eq:p_bulk}--\eqref{eq:v_bdy} that evolve translationally, traveling waves.
The system \eqref{eq:p_bulk}--\eqref{eq:v_bdy} has, inter alia, a flat front traveling wave solution whose boundary is a straight line moving with constant velocity $V^{(0)}$ along $y$-axis. This solution does not depend on the 
$x$-variable and is stationary in the moving frame: ${\bf v}(x,y,t)={\bf V}(y-V^{(0)}t)$, ${\bf p}(x,y,t)={\bf P}(y-V^{(0)}t)$. 
Moreover, it is defined up to a translation in the direction of $y$-axis and we stick to the one with  $\rho=0$. Then we have  $P_x=0$, $V_x=0$ and
\begin{equation*}
\begin{cases} \partial^2_{yy} P_y=P_y &\text{for}\ y<0,\\
	P_y=1\quad&\text{for}\ y=0,
\end{cases}
\quad\quad \quad\quad \quad\text{i.e.} \  P_y=e^y,
\end{equation*} 
\begin{equation}
\label{UrForVy}
\begin{cases} 2\mu \partial^2_{yy} V_y-\xi V_y -2\zeta e^{2y}+\zeta_i e^y=0\quad &\text{for}\ y<0,\\
	2\mu \partial_y V_y=\zeta\quad&\text{for}\ y=0.
\end{cases}
\end{equation}    
The unique (vanishing as $y\to -\infty$) solution of \eqref{UrForVy} is given by 
\begin{equation}
\label{Vy} 
V_y(y)= \textstyle{\frac{\zeta}{8\mu-\xi}}\Bigl(2 e^{2y} -\frac{\sqrt{\xi}}{\sqrt{2\mu}}e^{\sqrt{\xi}y/\sqrt{2\mu}} \Bigr) 
+\frac{\zeta_i}{2\mu-\xi} \left(\frac{\sqrt{2\mu}}{\sqrt{\xi}}e^{\sqrt{\xi}y/\sqrt{2\mu}}-e^y\right),
\end{equation}
so that it travels with constant velocity
\begin{equation}
\label{V0}
V^{(0)}=V_y\big|_{y=0}\bigr.= \textstyle{\frac{\zeta}{4\mu+\sqrt{2\mu\xi}}}+\frac{\zeta_i}{\sqrt{2\mu\xi}+\xi}.
\end{equation}

A flat front traveling wave solution can be considered as a periodic one with an arbitrary period $\Pi$. 
Our interest however is in finding non-flat $\Pi$-periodic in $x$ traveling wave solutions, that is we seek a pair 
of  $\Pi$-periodic function $\rho(x)$ and a 
constant  $C_\rho$ (velocity of this wave) such that  
\begin{equation}
	\mathcal{A}(\rho)=C_\rho,
	\label{eqAW}
\end{equation}
where the operator $\mathcal{A}(\rho)$ is given by \eqref{eq:nonlinear_evolution_equation} via $\Pi$-periodic in $x$ and vanishing as $y\to-\infty$ solution of  \eqref{eq:p_bulk}--\eqref{eq:v_bdy}. 

In order to perform the bifurcation analysis of the flat front traveling wave solution ($\rho=0$) consider the linearized operator
\begin{equation}
	\mathcal{L}\tilde\rho= \partial_\rho\mathcal{A}(0)\tilde\rho.
\end{equation}
By \eqref{eq:q_bulk}--\eqref{same_golovne_bc:w},
\begin{equation}
	\mathcal{L}\rho= \left(\partial_yV_y\rho+v_y\right)\bigr|_{y=0}=\frac{ \zeta}{2\mu}\rho +\bigl.v_y\bigr|_{y=0},
\end{equation}
where  
the (linearized) velocity  ${\bf v}=(v_x,v_y)$ and the (linearized) polarity ${\bf p}=(p_x,p_y)$ fields solve the  system
\begin{equation}
	\label{linearized_system}
	\begin{cases}
		\mu (\Delta  {\bf v}+\nabla {\rm div}{\bf v})-\zeta{\rm div} ({\bf p}\otimes {\bf P}+{\bf P}\otimes {\bf p})-\xi {\bf v}+\zeta_i {\bf p}=0\quad & \text{for}\ y<0,\\
		\Delta {\bf p}={\bf p}\quad &\text{for}\ y<0,\\ 
		\mu (\partial_x v_y+\partial_y v_x)= -\zeta \rho^\prime,\ \ \ 2\mu \partial_y v_y=-2\mu\partial^2_{yy}V_y\bigr|_{y=0}\rho+\gamma \rho^{\prime\prime} & \text{for}\ y=0,\\
		p_x=-\rho^\prime,\ p_y=-\partial_y P_y\bigr|_{y=0}\rho=-\rho & \text{for}\ y=0.
	\end{cases}
\end{equation} 

Next, we study the spectral properties of the operator $\mathcal{L}$ that amounts to finding  solutions  ${\bf v}=e^{iqx}\hat {\bf v}(y)$, ${\bf p}=e^{iqx}\hat {\bf p}(y)$ (Fourier modes)  for $\rho(x) =e^{iqx}$,
so that 
\begin{equation}
	\label{lin opr}
	\mathcal{L}e^{iqx}=  \bigl(\textstyle{\frac{ \zeta}{2\mu}}+\hat v_y(0)\bigr)e^{iqx}.
\end{equation}
We have 
\begin{equation}
	p_x=-iq e^{iqx +\sqrt{q^2+1}y},\quad  p_y=-
	e^{iqx +\sqrt{q^2+1}y},
\end{equation}
while components of ${\bf v}$ satisfy for $y<0$ the equations
\begin{align}
	\label{lin pr1}
	\mu (\Delta  v_x&+\partial_x {\rm div }  {\bf v})-\xi v_x
	=iq \zeta_i e^{iqx +\sqrt{q^2+1}y}\nonumber\\ 
	&-iq\zeta(\sqrt{q^2+1}+1) e^{iqx +(\sqrt{q^2+1}+1)y},  \\
	\label{lin pr2}
	\mu (\Delta  v_y&+\partial_y {\rm div } {\bf v})-\xi  v_y=\zeta_i e^{iqx +\sqrt{q^2+1}y} \nonumber\\
	&+q^2\zeta e^{iqx +(\sqrt{q^2+1}+1)y} -2\zeta(\sqrt{q^2+1}+1)
	e^{iqx +(\sqrt{q^2+1}+1)y}
\end{align}
with boundary conditions on the line $y=0$,
\begin{align}
	&\mu (\partial_x v_y+\partial_y v_x)=-iq\zeta e^{iqx}, \label{lin bc1}\\
	&
	2\mu \partial_y v_y=-\zeta
	\textstyle{\Bigl(\frac{\xi}{4\mu+\sqrt{2\mu\xi}}+2\Bigr)} e^{iqx}+\zeta_i	\textstyle{\frac{\sqrt{2\mu}}{\sqrt{\xi}+\sqrt{2\mu}}}  e^{iqx}-q^2\gamma e^{iqx}.
	\label{lin bc2}
\end{align}
Here we have used the formula 
\begin{equation*}
	\partial^2_{yy}V_y \big|_{y=0}\bigr.= \frac{\xi\zeta}{2\mu\bigl(4\mu+\sqrt{2\mu\xi}\bigr)}-\frac{\zeta_i}{\sqrt{2\mu\xi}+2\mu}+\frac{\zeta}{\mu}
\end{equation*}
which follows from \eqref{UrForVy} and \eqref{V0}.

We represent the  solution of \eqref{lin pr1}--\eqref{lin bc2} as the sum  
\begin{equation}
	\label{tryKrysyvyhVektora}
	{\bf v}= \zeta_i {\bf v}^t +\zeta{\bf v}^c+ \gamma{\bf v}^s
\end{equation}
of three terms caused by the traction force, the contractile stress and the surface tension. More precisely, ${\bf v^t}$ solves \eqref{lin pr1}--\eqref{lin bc2} with $\zeta_i=1$, $\zeta=\gamma=0$; for ${\bf v^c}$ we set $\zeta=1$, $\zeta_i=\gamma=0$; for ${\bf v^s}$ we set $\gamma=1$, $\zeta=\zeta_i=0$. Solutions of the corresponding problems \eqref{split 1}, \eqref{split 2}, \eqref{split 3} are found explicitly in Appendix \ref{mister_first}, so that
\begin{equation}
	\bigl.v^t_y\bigr|_{y=0}=\Lambda^t(q,\mu,\xi)\, e^{iqx},\ \bigl.v^c_y\bigr|_{y=0}=\Lambda^c(q,\mu,\xi)\, e^{iqx},\ \bigl.
	v^s_y\bigr|_{y=0}=\Lambda^s(q,\mu,\xi)\, e^{iqx},
\end{equation}
where
\begin{equation}
	\label{Lambda_t}
	\begin{aligned}
		&\Lambda^t(q,\mu,\xi) = \frac{\xi \textstyle{\sqrt{q^2+\frac{\xi}{2\mu}}} \Bigl(  2\mu\bigl(1-\sqrt{q^2+1}\bigr)+\sqrt{2\mu \xi}\bigl(1 -\textstyle{\sqrt{1+\frac{2\mu q^2}{\xi}}}\bigr)\Bigr)}
		{D(q,\mu,\xi) \Bigl(\textstyle{\sqrt{q^2+1} +\textstyle{\sqrt{q^2+\frac{\xi}{2\mu}}}}\Bigr)\bigl(\sqrt{2\mu \xi}+2\mu\bigr)}
		\\
		&
		+\frac{\xi q^2\bigl(1-\sqrt{q^2+1}\bigr) \textstyle{\sqrt{q^2+\frac{\xi}{2\mu}}}}
		{D(q,\mu,\xi)
			\Bigl(\textstyle{\sqrt{q^2+\frac{\xi}{\mu}} +\sqrt{q^2+\frac{\xi}{2\mu}}}\Bigr)
			\Bigl(\sqrt{q^2+1} +\textstyle{\sqrt{q^2+\frac{\xi}{2\mu}}}\Bigr)\Bigl(\textstyle{\sqrt{q^2+\frac{\xi}{\mu}}} +\sqrt{q^2+1}\Bigr)}
	\end{aligned}
\end{equation}
\begin{equation}
	\label{Lambda_c}
	\begin{aligned}
		\Lambda^c(q,\mu,\xi) = &\frac{\xi{\textstyle{\sqrt{q^2+\frac{\xi}{2\mu}}}}}
		{D(q,\mu,\xi) \left(\sqrt{q^2+1} +1+\textstyle{\sqrt{q^2+\frac{\xi}{2\mu}}}\right)}
		\Biggl(
		q^2-2\sqrt{q^2+1}-2\Biggr.
		\\
		&
		+\Biggl.\frac{2 q^4}
	{
		\bigl({\sqrt{q^2+1}+1+\textstyle{\sqrt{q^2+\frac{\xi}{\mu}}}}\bigr) 
		\bigl(\textstyle{\sqrt{q^2+\frac{\xi}{2\mu}}}+\textstyle{\sqrt{q^2+\frac{\xi}{\mu}}}\bigr)}\Biggr)
	\\
	&
	+\frac{\xi \textstyle{\sqrt{q^2+\frac{\xi}{2\mu}}}}{D(q,\mu,\xi)}\Biggr(\textstyle{\frac{\xi}{4\mu +\sqrt{2\mu\xi}}}
	+
	2-\frac{ q^2} {\textstyle{\sqrt{q^2+\frac{\xi}{2\mu}} }+\textstyle{\sqrt{q^2+\frac{\xi}{\mu}}}}
	\Biggr),
\end{aligned}
\end{equation}
\begin{equation}
\label{Lambda_s}
\Lambda^{s}(q,\mu,\xi)=\frac{\xi \textstyle{\sqrt{q^2+\frac{\xi}{2\mu}}}}{D(q,\mu,\xi)}q^2,
\end{equation}
\begin{equation}
\label{det_D}
D(q,\mu,\xi)=4\mu^2q^2\textstyle{\sqrt{q^2+\frac{\xi}{2\mu}}}\textstyle{\sqrt{q^2+\frac{\xi}{\mu}}}- (2\mu q^2+\xi)^2,
\end{equation}

Thus, according to \eqref{lin opr} we have the following formula for the eigenvalue of the linearized operator corresponding to the mode $e^{iqx}$:
\begin{equation}
	\label{Lambda_formula}
	\Lambda(q, \mu,\zeta, \zeta_i, \xi, \gamma)= \zeta_i \Lambda^{t}(q,\mu,\xi)+\textstyle{\frac{\zeta}{2\mu}}\left(1+2\mu\Lambda^{c}(q,\mu,\xi)\right)+\gamma\Lambda^{s}(q,\mu,\xi).   
\end{equation}
For simplicity hereafter we omit dependence on the parameters $\mu,\zeta, \zeta_i, \xi, \gamma$ and write $\Lambda(q)$ and $\Lambda^{t,c,s}(q)$. Notice that $\Lambda^{c}(q) \to -\textstyle{\frac{1}{4\mu}}$,  $\Lambda^{s}(q)=-\textstyle{\frac{|q|}{\mu}}+ O(\textstyle{\frac{1}{|q|}})$ and $\Lambda^{t}(q)=O(\textstyle{\frac{1}{q^2}})$ as $|q|\to \infty$, i.e. 
\begin{equation}
	\Lambda(q)= -\textstyle{\frac{\gamma}{\mu}}|q|  +\textstyle{\frac{\zeta}{4\mu}} + O(\textstyle{\frac{1}{|q|}}) \quad \text{as}\ |q|\to \infty.
	\label{asymp_lambda}
\end{equation}
Expanding 	$\Lambda$ in a neighborhood of $q=0$ we get
\begin{equation}
	\Lambda(q)= \left(\textstyle{\frac{\mu\zeta_i}{\xi(\sqrt{2\mu\xi}+2\mu)}}
	+\textstyle{\frac{\zeta}{2\mu\xi}}\bigl(3-2\sqrt{2}-\frac{2\sqrt{2\mu}(\sqrt{\xi}+\sqrt{2\mu})}{(2\sqrt{2\mu}+\sqrt{\xi})^2}\bigr)- \textstyle{\frac{\gamma}{\sqrt{2\mu\xi}}}\right)q^2+O(q^4).
\end{equation}
Thus in a neighborhood of $q=0$,  $\Lambda>0$ for  large enough $\zeta_i$  while  $\Lambda\to-\infty$ when $|q|\to \infty$. Then there exists a non-zero root $q=q_0>0$ of the transcendental equation 
\begin{equation}
	\Lambda(q)=0.
	\label{eq_for_q0}
\end{equation}
We show below that this $q_0$ defines a critical period $\Pi_0=2\pi/q_0$ 
for which a bifurcation of nontrivial traveling wave solutions occurs. 

\section{Bifurcation of traveling wave solutions}
\label{Bifur}
Let $q_0>0$ be a solution of \eqref{eq_for_q0}. We apply the celebrated theory of bifurcation from the simple eigenvalue (see Theorem \ref{CR_theorem}) 
to show that there emerges a family of nontrivial traveling wave solutions with periods close to the critical period $\Pi_0=2\pi/q_0$.

It is convenient to pass from the prescribed period $\Pi$ to another bifurcation parameter $\theta=\Pi/\Pi_0$ (scaling factor). Introduce new coordinates $x=\theta\tilde x$, $y=\theta\tilde y$ and change the unknowns  
$\tilde \rho=\frac{1}{\theta} \rho (\theta \tilde x)$,  
$\tilde{\bf v}=\frac{1}{\theta}{\bf v}(\theta\tilde  x,\theta\tilde  y)$ and  
$\tilde{\bf p}={\bf p}(\theta\tilde  x, \theta\tilde  y)$.
This allows us to reduce the analysis to the fixed period $\Pi_0$, while the parameter $\theta$ appears in the rescaled version of problem \eqref{eq:p_bulk}-\eqref{eq:v_bdy} (where we drop $\tilde \ $ to simplify the notation)
\begin{equation}
	\label{rescaled_system}
	\begin{aligned}
		&\Delta {\bf p}=\theta^2{\bf p}\quad &\text{for}\ y<\rho(x),\\ 
		&{\bf p}={\bf n} & \text{for}\ y=\rho(x),\\ 
		&\mu (\Delta  {\bf v}+\nabla {\rm div}{\bf v})-\zeta {\rm div} ({\bf p}\otimes {\bf p})-\theta^2\xi {\bf v}+\theta\zeta_i {\bf p}=0\quad & \text{for}\ y<\rho(x),\\
		&\mu (\nabla {\bf v}+(\nabla {\bf v})^T ){\bf n} =(\zeta-\frac{\gamma}{\theta}\kappa){\bf n} & \text{for}\ y=\rho(x).
	\end{aligned}		
\end{equation}
Then the problem \eqref{eqAW} of finding traveling wave solutions is equivalent to 
\begin{equation}
	\label{eqAtheta} 
	\mathcal{A}(\rho,\theta)=\left(v_y-v_x\rho'\right)\bigr|_{y=\rho(x)}=C_\rho,
\end{equation} 
where 
$C_\rho$ is an unknown constant, and ${\bf v}$, ${\bf p}$ solve \eqref{rescaled_system}.

Notice that the linearized operator $\mathcal{L}(\theta)$ has the eigenvalue $\Lambda\left(\frac{q_0}{\theta},\mu,\zeta, \zeta_i, \xi, \gamma\right)$ given by \eqref{Lambda_formula} whose corresponding eigenfunction is $\rho=e^{iq_0x}$. 
This eigenvalue becomes zero for  $\theta=1$. In this case, however, zero is a multiple eigenvalue since $\rho=1$ and $\rho=e^{-iq_0x}$ are also eigenfunctions. To get rid of this multiplicity issue observe that 
$\mathcal{A}(\rho+C,\theta)=\mathcal{A}(\rho,\theta)$, 
$\forall C={\rm const}$. Therefore we can pass to the quotient spaces $C^{k,\delta}_{ \#}(\Pi_0)/ \mathbb{R}$ and $C^{k-1,\delta}_{ \#}(\Pi_0)/ \mathbb{R}$ identifying constant functions with zero. This eliminates the eigenfunction $\rho=1$. Furthermore, the multiplicity can be reduced to one by assuming the natural symmetry $\rho(x)=\rho(-x)$.  

In view of above mentioned we can apply the following

\begin{theorem}[Crandall-Rabinowitz \cite{CranRab}]\label{CR_theorem}
	Let $X,Y$ be Banach spaces. Let $U\subset X$ be a neighborhood of \,$0$ and let
	\begin{equation}
		\Phi:U\times(1-\theta_0,1+\theta_0)\to Y
	\end{equation}
	have the following properties:
	\begin{itemize}
		\item[(i)] $\Phi(0,\theta)=0$ for all $\theta\in(1-\theta_0,1+\theta_0)$,
		\item[(ii)] $\Phi \in C^2 (U\times(1-\theta_0,1+\theta_0))$,
		\item[(iii)] ${\rm dim \, Ker}\left(\partial_x\Phi(0,1)\right)={\rm codim \, Ran}\left(\partial_x\Phi(0,0)\right)=1$,
		\item[(iv)] $\partial^2_{\theta x}\Phi(0,1)x_0\not\in{\rm Ran}\left(\partial_x\Phi(0,1)\right)$ where
		${\rm Ker}\left(\partial_x\Phi(0,1)\right)={\rm Span}\{x_0\}$.
	\end{itemize}
	Then if $\tilde X$ is any complement of ${\rm Span}\{x_0\}$ in $X$, there exists $\varepsilon>0$ and continuously differentiable functions $\psi:(-\varepsilon,\varepsilon)\to\mathbb R$ and $\phi:(-\varepsilon,\varepsilon)\to \tilde X$ such that $\phi(0)=0$, $\psi(0)=0$, and $\Phi(\alpha x_0+\alpha\phi(\alpha), 1+\psi(\alpha))=0\quad\forall\alpha\in(-\varepsilon,\varepsilon)$.
	Moreover, 	$\Phi^{-1}(\{0\})$ near $ (0,1)$ consists precisely of the curves 
	$x=0$ and $(\alpha x_0+\alpha\phi(\alpha), 1+\psi(\alpha))$, $\alpha\in(-\varepsilon,\varepsilon)$ 
	
\end{theorem}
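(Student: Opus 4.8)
The statement is the classical Crandall--Rabinowitz bifurcation-from-a-simple-eigenvalue theorem; the plan is to prove it by Lyapunov--Schmidt reduction followed by two applications of the implicit function theorem. First I would set $L:=\partial_x\Phi(0,1)$. By hypothesis (iii), $L$ is a Fredholm operator of index zero with $\mathrm{Ker}\,L=\mathrm{Span}\{x_0\}$ and $\mathrm{codim}\,\mathrm{Ran}\,L=1$ (the range condition at $(0,1)$ follows from the one at the base point by continuity of $\partial_x\Phi$ and the stability of the Fredholm property). Fix the given complement $\tilde X$ of $\mathrm{Span}\{x_0\}$ in $X$, a one-dimensional complement $Z$ of $\mathrm{Ran}\,L$ in $Y$, the projection $Q:Y\to\mathrm{Ran}\,L$ with $\mathrm{Ker}\,Q=Z$, and a linear functional $\ell$ on $Y$ with $\ell|_{\mathrm{Ran}\,L}=0$ and $\ell|_Z$ an isomorphism onto $\mathbb R$. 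Write each $x$ near $0$ uniquely as $x=\alpha x_0+w$, $w\in\tilde X$; then $\Phi(x,\theta)=0$ splits into the \emph{auxiliary equation} $Q\Phi(\alpha x_0+w,\theta)=0$ and the scalar \emph{bifurcation equation} $\ell\bigl(\Phi(\alpha x_0+w,\theta)\bigr)=0$.

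Next I would solve the auxiliary equation. Let $F(\alpha,w,\theta):=Q\Phi(\alpha x_0+w,\theta)$. By (i), $F(0,0,\theta)=0$ for all $\theta$, and $\partial_wF(0,0,1)=Q L|_{\tilde X}$ is an isomorphism of $\tilde X$ onto $\mathrm{Ran}\,L$, since $L$ maps $\tilde X$ bijectively onto $\mathrm{Ran}\,L$ and $Q$ is the identity there. By the implicit function theorem (using $\Phi\in C^2$ from (ii)) there is a $C^1$ map $(\alpha,\theta)\mapsto w(\alpha,\theta)$ near $(0,1)$ solving $F=0$, with $w(0,\theta)=0$ by uniqueness; differentiating the auxiliary equation in $\alpha$ at $(0,1)$ and using $QLx_0=0$ and injectivity of $QL|_{\tilde X}$ gives $\partial_\alpha w(0,1)=0$. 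Substituting, set $g(\alpha,\theta):=\ell\bigl(\Phi(\alpha x_0+w(\alpha,\theta),\theta)\bigr)$; since $w(0,\theta)=0$ and $\Phi(0,\theta)=0$ we have $g(0,\theta)=0$, so a Hadamard-type factorization gives $g(\alpha,\theta)=\alpha\,h(\alpha,\theta)$ with $h(\alpha,\theta)=\int_0^1\partial_\alpha g(s\alpha,\theta)\,ds$, and (after the reduction) $\Phi(\alpha x_0+w,\theta)=0$ with $\alpha\neq0$ is equivalent to $h(\alpha,\theta)=0$.

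It then remains to solve $h=0$. We have $h(0,1)=\partial_\alpha g(0,1)=\ell\bigl(L(x_0+\partial_\alpha w(0,1))\bigr)=0$ because $Lx_0=0$ and $\ell$ annihilates $\mathrm{Ran}\,L$. Differentiating $h(0,\theta)=\ell\bigl(\partial_x\Phi(0,\theta)[x_0+\partial_\alpha w(0,\theta)]\bigr)$ in $\theta$ at $\theta=1$, using $\partial_\alpha w(0,1)=0$ and $\ell|_{\mathrm{Ran}\,L}=0$, all terms drop except $\partial_\theta h(0,1)=\ell\bigl(\partial^2_{\theta x}\Phi(0,1)x_0\bigr)$, which is nonzero precisely by hypothesis (iv). The implicit function theorem now yields $\varepsilon>0$ and a $C^1$ function $\psi$ with $\psi(0)=0$ and $h(\alpha,1+\psi(\alpha))=0$ for $|\alpha|<\varepsilon$. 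Putting $\phi(\alpha):=\tfrac{1}{\alpha}w(\alpha,1+\psi(\alpha))$, extended by $\phi(0):=\partial_\alpha w(0,1)=0$ via the factorization, gives $\Phi(\alpha x_0+\alpha\phi(\alpha),1+\psi(\alpha))=0$ with $\phi(0)=0$, $\psi(0)=0$. Local uniqueness of $\Phi^{-1}(\{0\})$ near $(0,1)$ follows by retracing the reduction: any nearby solution has $w=w(\alpha,\theta)$ by uniqueness in the first IFT, hence $\alpha\,h(\alpha,\theta)=0$; $\alpha=0$ gives the trivial branch $x=0$, and $h=0$ gives $\theta=1+\psi(\alpha)$ by uniqueness in the second IFT.

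The main point requiring care is the regularity bookkeeping: each use of the implicit function theorem costs a derivative, and the factorization $g=\alpha h$ a priori produces an $h$ one order less regular than $g$, so reconciling the hypothesis $\Phi\in C^2$ with the conclusion that $\psi,\phi$ are $C^1$ needs either a direct difference-quotient estimate for the reduced maps or the observation that they are smoother than the naive count. A secondary technical point is establishing that $QL|_{\tilde X}$ is genuinely an isomorphism, which is exactly where the full simple-eigenvalue condition (iii) --- both $\dim\mathrm{Ker}$ and $\mathrm{codim}\,\mathrm{Ran}$ equal to one --- is used.
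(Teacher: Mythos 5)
This statement is not proved in the paper at all: it is Theorem~1.7 of Crandall--Rabinowitz, imported verbatim as a tool (the displayed ``$(0,0)$'' in condition (iii) is a typo for $(0,1)$, which you correctly read past). Your argument is a complete and correct proof of it, but it follows the two-step Lyapunov--Schmidt route rather than the one-step argument of the original paper: Crandall and Rabinowitz do not split $Y$ and do not form a scalar bifurcation equation; instead they define $g(\alpha,w,\theta)=\alpha^{-1}\Phi(\alpha(x_0+w),\theta)$ for $\alpha\neq 0$, extended by $g(0,w,\theta)=\partial_x\Phi(0,\theta)(x_0+w)$, verify that $g$ is $C^1$ near $(0,0,1)$, and apply the implicit function theorem once in the joint variable $(w,\theta)\in\tilde X\times\mathbb R$, the derivative $(w,\tau)\mapsto Lw+\tau\,\partial^2_{\theta x}\Phi(0,1)x_0$ being an isomorphism onto $Y$ exactly by (iii) and (iv) together. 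Their version buys a shorter argument and makes the simultaneous use of the kernel and cokernel conditions transparent; yours buys a cleaner separation between the infinite-dimensional auxiliary equation and the scalar bifurcation equation, which is the form most useful for the asymptotic expansions in Section~\ref{Expansions}. The regularity issue you flag at the end resolves without any difference-quotient work: since $\Phi\in C^2$, the implicit function theorem gives $w(\alpha,\theta)\in C^2$, hence $g\in C^2$ and $h(\alpha,\theta)=\int_0^1\partial_\alpha g(s\alpha,\theta)\,ds$ is $C^1$, so the second application of the implicit function theorem legitimately produces $C^1$ functions $\psi$ and $\phi$ (and the same Hadamard factorization applied to $w$ shows $\phi(\alpha)=\alpha^{-1}w(\alpha,1+\psi(\alpha))$ is $C^1$ through $\alpha=0$). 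Your closing remark about $QL|_{\tilde X}$ is also right but unproblematic: $\dim\mathrm{Ker}\,L=1$ makes $L|_{\tilde X}$ injective with range $\mathrm{Ran}\,L$, and $\mathrm{codim}\,\mathrm{Ran}\,L=1$ is what makes the cokernel one-dimensional so that a single scalar equation remains.
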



Using Theorem \ref{CR_theorem} we establish bifurcation of nontrivial traveling wave solutions for the problem \eqref{eq:p_bulk}--\eqref{eq:v_bdy},  \eqref{eqAW}.

\begin{theorem}\label{main_theorem}
	Assume that equation \eqref{eq_for_q0} has a root $q_0>0$ and $\Lambda(jq_0)\not=0$ for $j=2, 3, \dots$ Assume also that $\partial_q\Lambda(q_0)\not=0 $. Then there is a  family of nontrivial (non-flat) traveling wave solutions $\rho = \rho(x, \alpha)$ of \eqref{eqAW} with periods $\Pi=2\pi\theta(\alpha)/q_0 $, depending on a small parameter $\alpha$. Moreover, 
	$\rho(x,\alpha)$ and $\theta(\alpha)$ smoothly depend on the parameter $\alpha$ and $\rho(x,0)=0$, $\theta(0)=1$. 
\end{theorem}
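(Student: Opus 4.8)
The plan is to recast the traveling wave equation \eqref{eqAtheta} as an abstract bifurcation problem $\Phi(\rho,\theta)=0$, with $\theta$ as the bifurcation parameter, and to verify the hypotheses of Theorem \ref{CR_theorem}. Since the wave speed $C_\rho$ in \eqref{eqAtheta} is an unknown constant, I would eliminate it by passing to quotients: fix $k\geq 3$ and $0<\delta<1$, let $X$ be the subspace of even functions in $C^{k,\delta}_{\#}(\Pi_0)/\mathbb{R}$ and $Y$ the subspace of even functions in $C^{k-1,\delta}_{\#}(\Pi_0)/\mathbb{R}$, and define $\Phi(\rho,\theta)$ to be the class of $\mathcal{A}(\rho,\theta)$ in $Y$, for $\theta$ in a neighborhood of $1$ (where \eqref{rescaled_system} is uniquely solvable, being a small elliptic perturbation of the $\theta=1$ system). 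Then $\Phi(\rho,\theta)=0$ is precisely the statement that $\mathcal{A}(\rho,\theta)$ is constant, i.e. that $(\rho,C_\rho)$ solves the rescaled traveling wave problem; undoing the change of variables $x=\theta\tilde x$, $y=\theta\tilde y$, $\rho=\theta\tilde\rho$ then returns a genuine $\Pi$-periodic solution of \eqref{eqAW} with $\Pi=2\pi\theta/q_0$. First one must check $\Phi$ is well defined, i.e. that $\mathcal{A}$ sends even functions to even functions; this follows from the equivariance of \eqref{rescaled_system} under the reflection $x\mapsto -x$ together with uniqueness of its solution, which forces $v_x$ odd and $v_y$ even in $x$, so that $v_y-v_x\rho'$ is even when $\rho$ is even.

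Next I would verify conditions (i)--(iv) of Theorem \ref{CR_theorem}. Condition (i), $\Phi(0,\theta)=0$, is exactly the fact that the flat front of Section \ref{sec2_ffs} is a traveling wave for every $\theta$. Condition (ii), $\Phi\in C^2$ (in fact $C^\infty$), follows from Lemma \ref{th:oper_prop} for the dependence on $\rho$, and for the dependence on $\theta$ from the observation that $\theta$ enters \eqref{rescaled_system} polynomially, so that the elliptic estimates of \cite{Agmon1964} yield joint smoothness of $(\rho,\theta)\mapsto\mathcal{A}(\rho,\theta)$. For (iii) I would use the Fourier representation \eqref{lin opr}: since the flat front is $x$-independent, $\partial_\rho\Phi(0,1)=\mathcal{L}(1)$ commutes with $x$-translations and preserves the even subspace, hence is diagonal in the basis $\{\cos(jq_0x)\}_{j\geq 1}$ of $X$ with $\mathcal{L}(1)\cos(jq_0x)=\Lambda(jq_0)\cos(jq_0x)$. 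By \eqref{asymp_lambda} these eigenvalues tend to $-\infty$ and stay bounded away from $0$ for $j\geq 2$ by hypothesis, so $\mathcal{L}(1)$ acts as an index-zero Fredholm operator from $X$ to $Y$; since $\Lambda(q_0)=0$ is the only vanishing eigenvalue, both its kernel and its cokernel are spanned by $x_0:=\cos(q_0x)$, giving $\dim\mathrm{Ker}\,\partial_\rho\Phi(0,1)=\mathrm{codim}\,\mathrm{Ran}\,\partial_\rho\Phi(0,1)=1$.

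For the transversality condition (iv), note that $\mathcal{L}(\theta)\cos(q_0x)=\Lambda(q_0/\theta)\cos(q_0x)$, hence
$\partial^2_{\theta\rho}\Phi(0,1)x_0=\left.\tfrac{d}{d\theta}\Lambda(q_0/\theta)\right|_{\theta=1}\cos(q_0x)=-q_0\,\partial_q\Lambda(q_0)\,\cos(q_0x)$,
which is a nonzero multiple of $x_0$ precisely because $\partial_q\Lambda(q_0)\neq 0$; as $x_0$ spans the cokernel it does not lie in $\mathrm{Ran}\,\mathcal{L}(1)$, so (iv) holds. Theorem \ref{CR_theorem} then yields $\varepsilon>0$ and $C^1$ maps $\alpha\mapsto\psi(\alpha)\in\mathbb{R}$ and $\alpha\mapsto\phi(\alpha)\in\tilde X$ with $\phi(0)=\psi(0)=0$ such that $\rho(x,\alpha)=\alpha\cos(q_0x)+\alpha\phi(\alpha)$ and $\theta(\alpha)=1+\psi(\alpha)$ solve the rescaled problem and $\rho(x,0)=0$, $\theta(0)=1$. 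Undoing the rescaling gives the claimed family of non-flat traveling waves of \eqref{eqAW} with periods $\Pi=2\pi\theta(\alpha)/q_0$. Finally, $C^\infty$ dependence on $\alpha$ follows either from a higher-regularity version of the Crandall--Rabinowitz argument (here $\Phi\in C^\infty$) or by applying the implicit function theorem to the Lyapunov--Schmidt reduced equation after dividing out the trivial branch, whose linearization at the origin is invertible by (iv).

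\textbf{Main obstacle.} The one step that requires real care is the Fredholm analysis of $\mathcal{L}(1)$ in the Hölder scale and the precise identification of its kernel and cokernel. The Fourier diagonalization makes the computation transparent once one knows that $\mathcal{L}(1)$, whose symbol grows like $-\tfrac{\gamma}{\mu}|q|$ by \eqref{asymp_lambda}, behaves as an elliptic operator of order one between $C^{k,\delta}_{\#}$ and $C^{k-1,\delta}_{\#}$ — this is where the elliptic estimates already invoked in the proof of Theorem \ref{teorema_pro_spectr} must be used to justify that $\mathcal{L}(1)$ is bounded, Fredholm of index zero, and that its range is exactly the codimension-one subspace of functions orthogonal to $x_0$ in the Fourier sense. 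The remaining verifications ((i), (ii), (iv)) are essentially bookkeeping given Lemma \ref{th:oper_prop} and the explicit eigenvalue formula \eqref{Lambda_formula}.
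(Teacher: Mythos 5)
Your proposal follows the paper's proof essentially step for step: the same rescaling to the fixed period $\Pi_0$, the same quotient by constants and restriction to even functions to reduce the kernel to one dimension, the same verification of (i), (ii) via Lemma \ref{th:oper_prop}, the same Fourier diagonalization for (iii), and the identical transversality computation $\partial^2_{\theta\rho}\Phi(0,1)\cos q_0x=-q_0\,\partial_q\Lambda(q_0)\cos q_0x$ for (iv). The conclusion, including the $C^\infty$ dependence on $\alpha$ (the paper cites Theorem 1.18 of \cite{CranRab}), matches.

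The one place where you defer rather than prove is exactly the step you flag as the main obstacle: showing that $\mathrm{Ran}\,\mathcal{L}(1)$ is precisely the codimension-one subspace \eqref{rangeL} \emph{in the H\"older scale}. Your plan is to invoke ``the elliptic estimates already invoked in the proof of Theorem \ref{teorema_pro_spectr}'', but those estimates live in the Sobolev setting ($H^{3/2}_{\#}\to H^{1/2}_{\#}$) and do not transfer automatically to surjectivity of $\mathcal{L}(1)$ onto the orthogonal complement of $\cos q_0x$ in $C^{k-1,\delta}_{\#}/\mathbb{R}$; a Fredholm property in one scale does not by itself yield it in another. The paper closes this gap with a concrete device: writing the candidate preimage as $\varrho=\sum_{j\ge2}c_j\Lambda(jq_0)^{-1}\cos jq_0x$ as in \eqref{varrho}, it peels off the leading terms $-\mu/(\gamma q_0 j)-\mu\zeta/(4(\gamma q_0 j)^2)$ of $\Lambda(jq_0)^{-1}$, identifies them via the Sokhotski--Plemelj formula with the Hilbert-transform operators $\mathcal{K}$ and $\mathcal{K}^2$ in \eqref{hilbert+}, which are bounded on $C^{k,\delta}_{\#}(0,\Pi_0)$, and observes that the remainder is $O(j^{-3})$ by \eqref{asymp_lambda}, hence smoothing. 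If you supply this (or an equivalent order-one pseudodifferential argument in H\"older spaces), your proof is complete; as written, that single analytic step is asserted rather than established.
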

\begin{proof}
	Recall that 
	problem \eqref{eqAW}, where the prescribed period  $\Pi$ is considered as a bifurcation parameter, is reduced via rescaling with the factor $\theta>0$ to equation \eqref{eqAtheta} with fixed period $\Pi_0=2\pi/q_0$.   
	Observe that for every even function $\rho\in C^{k,\delta}_{ \#}(0,\Pi_0)$ there is a unique $\Pi_0$-periodic in $x$ and vanishing as $y\to-\infty$ solution ${\bf v}$, ${\bf p}$ of the rescaled problem \eqref{rescaled_system}, and the symmetry 
	\begin{align*}
		v_y(-x, y)&=v_y(x, y), \quad \quad p_y(-x, y)=p_y(x, y),\\
		v_x(-x, y)&=-v_x(x, y), \quad \, p_x(-x, y)=-p_x(x, y)
	\end{align*}
	holds. Thus 
	we can apply Theorem \ref{CR_theorem} to the family of operators $\mathcal{A}(\rho, \theta)$ with $X$ and $Y$ being  subspaces of $C^{k,\delta}_{ \#}(0,\Pi_0)/\mathbb{R}$ and $C^{k-1,\delta}_{ \#}(0,\Pi_0)/\mathbb{R}$  ($0<\delta<1, k=2, 3, \dots$)  
	of even functions. 
	
	The flat front traveling wave solution for $\rho=0$ constructed in Section \ref{sec2_ffs}  satisfies 
	the condition $(i)$ of Theorem \ref{CR_theorem}.  
	By virtue of Theorem \ref{th:oper_prop}, $(ii)$ is also satisfied.
	Since the linearized  operator $\mathcal{L}(\theta)$ has the following spectral representation: 
	\begin{align}
		\mathcal{L}(\theta): \cos jq_0x\mapsto \Lambda \left(\textstyle{\frac{jq_0}{\theta}}\right)\cos jq_0x, \quad j=1,2,\dots,
		\label{spectral_representation}
	\end{align} 
	the kernel of $\mathcal{L}=\mathcal{L}(1)$ is one-dimensional and is spanned by $\{\cos q_0x\}$. 
	
	We claim that
	\begin{equation}
		\text{Ran}\left(\mathcal{L}\right)=\left\{\rho \in C^{k-1,\delta}_{ \#}(0,\Pi_0)/\mathbb{R}\, \big| \, \text{$\rho$ is even,}\ \int_{\Pi_0}\rho \cos q_0xdx=0\right\}.
		\label{rangeL}
	\end{equation}
	Indeed, consider the equation $\mathcal{L}\varrho =\rho$, where $\rho$ belongs to the space given by the right-hand side of \eqref{rangeL}. Then expanding $\rho$ into Fourier series $\rho=\sum_{j\geq 2} c_j\cos jq_0x$ we have
	\begin{equation}
		\label{varrho}
		\begin{aligned}
			\varrho
			=\sum_{j\geq 2} \frac{c_j}{\Lambda(jq_0)} \cos jq_0x 
			&=-\sum_{j\geq 2} \left( \frac{\mu}{\gamma q_0 j}  + \frac{\mu\zeta}{4(\gamma q_0 j)^2}\right) c_j{\cos jq_0x}\\
			&+\sum_{j\geq 2}\left(\frac{1}{\Lambda(jq_0)}
			+\frac{\mu}{\gamma q_0 j} + \frac{\mu\zeta}{4(\gamma q_0 j)^2}\right)c_j \cos jq_0x.
		\end{aligned}
	\end{equation}
	Let us show that $\varrho\in C^{k,\delta}_{ \#}(0,\Pi_0)$. It follows from \eqref{asymp_lambda} that the second term in the right-hand side of \eqref{varrho} belongs to $W^{k+2, 2}_{ \#}(0,\Pi_0)$ and hence to $C^{k,\delta}_{ \#}(0,\Pi_0)$. 
	Using the Sokhotski-Plemelj formulas another term can be represented as 
	\begin{align}
		- \frac{\mu}{\gamma}\mathcal{K}\rho-	\frac{\mu\zeta}{4\gamma^2}\mathcal{K}^2\rho,   \quad \text{where}\   \mathcal{K}\rho= \frac{1}{\Pi_0} \text{p.v.}\int_{0}^{\Pi_0}\cot {\textstyle\frac{q_0(x-z)}{2}} \int_{0}^{z}\rho(s)dsdz.
		\label{hilbert+}
	\end{align}
	Since the Hilbert transform involved in \eqref{hilbert+} continuously maps  $C^{k,\delta}_{ \#}(0,\Pi_0)$ to $ C^{k,\delta}_{ \#}(0,\Pi_0)$, the first term of right-hand side of \eqref{varrho} also belongs to $ C^{k,\delta}_{ \#}(0,\Pi_0)$. 
	
	And finally, the transversality condition  $(iv)$ 
	is satisfied since
	\begin{align}
		\partial_\theta\mathcal{L}(1) \cos q_0x=-q_0 \partial_q \Lambda(q_0)\cos q_0x\not\in
		\text{Ran}\left(\mathcal{L}\right).
	\end{align}
	Thus all of the conditions of Theorem \ref{CR_theorem} are fulfilled. Also according to Theorem 1.18 from \cite{CranRab}, $\rho(x,\alpha)$ and $\theta(\alpha)$ are infinitely differentiable in $\alpha$ functions.   
\end{proof}

\begin{remark}\label{rem_even}
	Note that traveling wave problem \eqref{eqAtheta} is invariant with respect to shifts in $x$-axis, moreover, for any even solution $\rho$ of \eqref{eqAtheta}, its shift by the half-period is still an even solution. Thus we can assume that 
	$\forall \alpha$ $\rho(x,-\alpha)=\rho(x-\Pi_0/2,\alpha)$, $\theta(-\alpha)=\theta(\alpha)$.
\end{remark}

Now we address the issue of stability of traveling wave solutions. To this end we apply the following result obtained in \cite{CranRab73}. 

\begin{theorem} \label{CrRabSecond}
	Assume that conditions of Theorem \ref{CR_theorem} are fulfilled and $X\subseteq Y$ with continuous embedding. Then for sufficiently small $\alpha$ there exists the smallest in absolute value simple eigenvalue $\lambda (\alpha)$ of the linearized operator $\partial_x\Phi(\alpha x_0+\alpha\phi(\alpha),1+\psi(\alpha))$ and 
	\begin{align}
		\lambda (\alpha)= -\alpha\tilde\lambda'(1) \theta'(\alpha)(1+O(\alpha))  \ \text{as } \ \alpha \to 0,
	\end{align}	 
	where $\tilde\lambda(\theta)$ is the smallest in absolute value  eigenvalue  of $\partial_x\Phi(0,\theta)$.
\end{theorem}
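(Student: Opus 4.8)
The statement is essentially Theorem~1.16 of \cite{CranRab73} transplanted into the present functional setting, so the plan is to reproduce that argument while checking that its hypotheses are met here. The mechanism is to track, through the derivative of the bifurcation equation, how the eigenvalue $\lambda(\alpha)$ along the bifurcating branch is tied to the eigenvalue $\tilde\lambda(\theta)$ along the trivial branch, and to locate both by perturbation off the bifurcation point $(0,1)$.

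The first step is to set up the spectral picture. Write $T_\alpha:=\partial_x\Phi(\alpha x_0+\alpha\phi(\alpha),1+\psi(\alpha))$ and $T^0_\theta:=\partial_x\Phi(0,\theta)$; these are the linearizations $\partial_\rho\mathcal A$ of the rescaled operator, with $T^0_1=\mathcal L$, and by Theorem~\ref{teorema_pro_spectr} they are closed on $Y$ with domain $X$ and have discrete spectrum without finite accumulation points. By \eqref{spectral_representation} the operator $\mathcal L$ is diagonal in the Fourier basis $\{\cos jq_0x\}_{j\ge1}$, so $0$ is an algebraically simple eigenvalue, spanned by $x_0=\cos q_0x$, and it is isolated because $\Lambda(jq_0)\ne0$ for $j\ge2$ and $\Lambda(q)\to-\infty$. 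Fix $\ell\in Y^{*}$ with $\ker\ell={\rm Ran}(\mathcal L)$ and $\ell(x_0)=1$; then $\ell$ is the left eigenfunctional of $\mathcal L$ at $0$. Since $\alpha x_0+\alpha\phi(\alpha)\to0$, $1+\psi(\alpha)\to1$, and $\rho\mapsto\partial_\rho\mathcal A(\rho)$ is smooth into bounded operators $X\to Y$ by Lemma~\ref{th:oper_prop}, one has $T_\alpha\to\mathcal L$ in the norm--resolvent sense. Kato's perturbation theory for an isolated algebraically simple eigenvalue then gives, for $\alpha$ small, a unique eigenvalue $\lambda(\alpha)$ of $T_\alpha$ in a fixed small disc about $0$: it is simple, it is the smallest in absolute value (every other eigenvalue stays a fixed distance from $0$), and $\lambda(\alpha)$, a right eigenvector $z(\alpha)$, the dual eigenfunctional $\ell_\alpha$ (normalized by $\ell_\alpha(z(\alpha))=1$), and the rank-one Riesz projection $Q_\alpha=\ell_\alpha(\cdot)\,z(\alpha)$ depend $C^{1}$ on $\alpha$, with $\lambda(0)=0$, $z(0)=x_0$, $\ell_0=\ell$. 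The same argument applied to $\theta\mapsto T^0_\theta$ produces $\tilde\lambda(\theta)$ and an eigenvector $v(\theta)$ near $\theta=1$, with $\tilde\lambda(1)=0$, $v(1)=x_0$.

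The second step is to compute $\tilde\lambda'(1)$ and record the key identity. Differentiating $T^0_\theta v(\theta)=\tilde\lambda(\theta)v(\theta)$ at $\theta=1$ and pairing with $\ell$, the term $\ell(\mathcal L v'(1))$ vanishes because its argument lies in ${\rm Ran}(\mathcal L)=\ker\ell$, and $\tilde\lambda(1)=0$, whence $\tilde\lambda'(1)=\ell\big(\partial^2_{\theta x}\Phi(0,1)x_0\big)$, which is nonzero by the transversality condition~(iv). For the identity, differentiate the bifurcation equation $\Phi(\alpha x_0+\alpha\phi(\alpha),1+\psi(\alpha))=0$ in $\alpha$: with $w(\alpha):=x_0+\phi(\alpha)+\alpha\phi'(\alpha)$, so that $w(0)=x_0$, one gets $T_\alpha w(\alpha)=-\psi'(\alpha)\,\partial_\theta\Phi(\alpha x_0+\alpha\phi(\alpha),1+\psi(\alpha))$. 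Since $\Phi(0,\theta)\equiv0$ forces $\partial_\theta\Phi(0,\cdot)\equiv0$, a first-order Taylor expansion in the first argument gives $\partial_\theta\Phi(\alpha x_0+\alpha\phi(\alpha),1+\psi(\alpha))=\alpha\big(\partial^2_{\theta x}\Phi(0,1)x_0+O(\alpha)\big)$ (the $O(\alpha)$ relying on the $C^\infty$ regularity from Lemma~\ref{th:oper_prop}), hence, with $\theta'(\alpha)=\psi'(\alpha)$,
\begin{equation*}
T_\alpha w(\alpha)=-\alpha\,\theta'(\alpha)\big(\partial^2_{\theta x}\Phi(0,1)x_0+O(\alpha)\big).
\end{equation*}

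The last step is to apply $Q_\alpha$ to this identity. On the left, $Q_\alpha T_\alpha w(\alpha)=\lambda(\alpha)\,Q_\alpha w(\alpha)=\lambda(\alpha)\,\ell_\alpha(w(\alpha))\,z(\alpha)$; on the right, $Q_\alpha$ returns $-\alpha\theta'(\alpha)\big(\ell_\alpha(\partial^2_{\theta x}\Phi(0,1)x_0)+O(\alpha)\big)z(\alpha)$. Comparing the coefficients of $z(\alpha)$ and letting $\alpha\to0$, where $\ell_\alpha(w(\alpha))\to\ell(x_0)=1$ and $\ell_\alpha(\partial^2_{\theta x}\Phi(0,1)x_0)\to\tilde\lambda'(1)\ne0$, yields $\lambda(\alpha)=-\alpha\,\tilde\lambda'(1)\,\theta'(\alpha)(1+O(\alpha))$, which is the claim. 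I expect the main obstacle to be the first step: making ``the smallest in absolute value simple eigenvalue'' genuinely well defined and $C^{1}$ along the bifurcating branch requires combining the no-accumulation estimate of Theorem~\ref{teorema_pro_spectr}, the fact — read off from \eqref{spectral_representation} — that $0$ has algebraic (not merely geometric) multiplicity one at $\theta=1$, and the norm--resolvent continuity of $T_\alpha$; once that is in place, the remaining steps are careful but routine bookkeeping of $O(\alpha)$ remainders.
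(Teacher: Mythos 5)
Your argument is correct, but note that the paper itself offers no proof of Theorem~\ref{CrRabSecond}: it is imported verbatim as Theorem~1.16 of \cite{CranRab73}, so there is nothing in the text to compare against except that citation. What you have written is essentially a faithful reconstruction of the original Crandall--Rabinowitz proof (differentiate the branch equation $\Phi(\alpha x_0+\alpha\phi(\alpha),1+\psi(\alpha))=0$ in $\alpha$, Taylor-expand $\partial_\theta\Phi$ using $\Phi(0,\theta)\equiv 0$, and project onto the perturbed simple eigenspace), and your preliminary step --- using Theorem~\ref{teorema_pro_spectr}, the algebraic simplicity and isolatedness of $0$ read off from \eqref{spectral_representation}, and norm--resolvent continuity to make the rank-one Riesz projection $Q_\alpha$ and hence ``the smallest in absolute value simple eigenvalue'' well defined and $C^1$ --- is exactly the verification needed to legitimately apply that theorem in the present setting.
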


Consider traveling wave solution and linearized operator $\partial_\rho {\mathcal A}(\rho,\theta(\alpha))$ 
near the bifurcation point, i.e. when $|\alpha|$ is small. 
Its spectrum has the following structure. There is a zero eigenvalue of multiplicity two corresponding to infinitesimal shifts with eigenfunctions equal $1$ (vertical shifts) and $\rho^\prime$ (horizontal shifts) respectively. By Theorem \ref{CrRabSecond} the smallest in absolute value nonzero eigenvalue  of $\partial_\rho {\mathcal A}(\rho,\theta(\alpha))$  is given by the asymptotic formula
\begin{align}
	\lambda (\alpha)=q_0 \partial_q \Lambda(q_0) \alpha\theta'(\alpha)(1+O(\alpha))  \ \text{as } \ \alpha \to 0,
	\label{asymptotic_lambda}
\end{align}
so the sign of $\lambda (\alpha)$ is determined by that of $\alpha\theta'(\alpha)$. Other eigenvalues either remain bounded and converge as $\alpha\to 0$ to those of $\partial_\rho {\mathcal A}(0,1)$, or have a negative sufficiently large in absolute value imaginary part (and therefore do not affect stability).


\section{Asymptotic expansions of traveling wave solutions near the bifurcation point}
\label{Expansions}
Let $q_0>0$ be a solution of \eqref{eq_for_q0}, and assume that other conditions of Theorem \ref{main_theorem} are satisfied. Then we have a family of $\Pi_0$-periodic ($\Pi_0=2\pi/q_0$) traveling wave solutions $\rho(x,\alpha)$ of problem \eqref{eqAtheta}.  The corresponding polarisation and velocity fields, ${\bf p}(x,y,\alpha)$, ${\bf v}(x,y,\alpha)$ are unique solutions of \eqref{rescaled_system}.  

For $\alpha=0$ we have flat front solution $\rho(x,0)=0$, $\theta(0)=1$, and ${\bf p}(x,y,0)={\bf P}(y)$,  ${\bf v}(x,y,0)={\bf V}(y)$, where ${\bf P}(y)=(0, e^y)$ and ${\bf V}(y)= (0,V_y(y))$ with $V_y(y)$ given by \eqref{Vy}. Now, for small $\alpha$ we consider asymptotic expansions
\begin{align}
	\theta(\alpha)&=1+b\alpha^2+O(\alpha^4)\label{expan_theta}\\ 
	C_\rho&=V^{(0)}+\alpha^2 V^{(2)}+O(\alpha^4),\label{expan_Crho}\\ 
	\rho(x,\alpha)&=\alpha \rho^{(1)}(x)+\alpha^2\rho^{(2)}(x)+\alpha^3\rho^{(3)}(x)+O(\alpha^4),\\ 
	{\bf p}(x,y,\alpha)&={\bf P}(y)+\alpha {\bf p}^{(1)}(x,y)+\alpha^2 {\bf p}^{(2)}(x,y)+\alpha^3 {\bf p}^{(3)}(x,y)+O(\alpha^4),\label{expan_p}\\ 
	{\bf v}(x,y,\alpha)&={\bf V}(y)+\alpha {\bf v}^{(1)}(x,y)+\alpha^2 {\bf v}^{(2)}(x,y)+\alpha^3 {\bf v}^{(3)}(x,y)+O(\alpha^4),\label{expan_v}
\end{align}
where $V^{(0)}$ is given by \eqref{V0}, $\rho^{(k)}(x)$ are even functions with zero mean value, and in view of Remark \ref{rem_even}, the expansions \eqref{expan_theta}--\eqref{expan_Crho} contain only even powers of $\alpha$. Our main objective here is to identify the sign of the parameter $b$ that 
is decisive for the stability of the traveling wave solution.

We substitute the expansions above into \eqref{rescaled_system}--\eqref{eqAtheta} and equate the terms corresponding to the same powers of $\alpha$. 
At the order $\alpha$ we arrive at the linearized problem \eqref{linearized_system} for ${\bf p}={\bf p}^{(1)}$ and ${\bf v}={\bf v}^{(1)}$ coupled with the equation $\mathcal{L}\rho^{(1)}= 0$ for $\Pi_0$-periodic even fuction $\rho^{(1)}$. According to spectral analysis of the operator $\mathcal{L}$ (Section \ref{prop_A}) we have, up to a multiplicative constant, 
\begin{align}
	\rho^{(1)}=\cos q_0 x,
	\label{rho1}
\end{align} 
and
\begin{align}
	\label{p_1}
	p_x^{(1)}= q_0 e^{y\sqrt{q_0^2+1}} \sin q_0 x, \ p_y^{(1)}=- e^{y\sqrt{q_0^2+1}} \cos q_0 x,
\end{align}
while ${\bf v}^{(1)}$ is the real part of ${\bf v}= \zeta_i {\bf v}^t +\zeta{\bf v}^c+ \gamma{\bf v}^s$ explicitely found in Appendix \ref{mister_first}.

Next, equating terms of the order $\alpha^2$  in \eqref{rescaled_system} we find the following system
\begin{align}
	&\Delta {\bf p}^{(2)}={\bf p}^{(2)}+2b {\bf P} \quad &\text{for}\ y<0,\\
	&p^{(2)}_x=-\partial_y p_x^{(1)}\rho^{(1)}-[\rho^{(2)}]^\prime\quad  &\text{for}\ y=0,\\
	&p^{(2)}_y=-\partial_y P_y\rho^{(2)}-{\textstyle\frac{1}{2}}\partial^2_{yy} P_y[\rho^{(1)}]^2-  \partial_y p_y^{(1)}\rho^{(1)}
	-{\textstyle \frac{1}{2}}{[\rho^{(1)}]^\prime}^2 \quad &\text{for}\ y=0,
\end{align}
\begin{align}
	&\mu (\Delta {\bf v}^{(2)}+\nabla {\rm div}{\bf v}^{(2)})-\zeta {\rm div}\bigl({\bf p}^{(2)}\otimes {\bf P}+{\bf P}\otimes {\bf p}^{(2)} \bigr)\nonumber
	\\ 
	&\label{v^2_eq}\quad\quad -\zeta {\rm div}\bigl({\bf p}^{(1)}\otimes {\bf p}^{(1)}\bigr)
	-\xi {\bf v}^{(2)}-2\xi b {\bf V}
	+\zeta_i {\bf p}^{(2)} +\zeta_i b {\bf P}=0\quad \quad \text{for}\ y<0,\\
	&\mu\bigl(\partial_x v_y^{(2)}+\partial_y v_x^{(2)}\bigr)= 
	2\mu \partial_x v_x^{(1)}[\rho^{(1)}]^\prime -\mu \bigl(\partial^2_{xy}v_y^{(1)}+\partial^2_{yy}v_x^{(1)}\bigr)\rho^{(1)}
	\nonumber\\
	&\quad\quad\quad\quad\quad\quad\quad\quad-\zeta [\rho^{(2)}]^\prime -\gamma[\rho^{(1)}]^\prime[\rho^{(1)}]^{\prime\prime}
	\quad\quad\quad\quad\quad\quad\quad\quad\quad \ 
	\text{for}\ y=0,\label{v^2_boundarycondition_first}\\
	&2\mu \partial_yv_y^{(2)}= \mu \bigl(\partial_xv_y^{(1)}+\partial_yv_x^{(1)}\bigr)[\rho^{(1)}]^\prime
	-2\mu \partial^2_{yy}V_y \rho^{(2)}-\mu \partial^3_{yyy}V_y [\rho^{(1)}]^2 \nonumber\\
	&\quad\quad\quad\quad-2\mu \partial^2_{yy}v_y^{(1)}\rho^{(1)}
	+\gamma[\rho^{(2)}]^{\prime\prime}\quad\quad\quad\quad\quad\quad
	\quad\quad\quad\quad\quad\quad\quad
	\text{for}\ y=0,\label{v^2_boundarycondition_second}
\end{align}
and from \eqref{eqAtheta} we get
\begin{align}
	v^{(2)}_y+\partial_y V_y \rho^{(2)} =-\textstyle{\frac{1}{2}} \partial^2_{yy} V_y [\rho^{(1)}]^2+v_x^{(1)}[\rho^{(1)}]^\prime-\partial_y v_y^{(1)} \rho^{(1)}+V^{(2)}
	\quad
	\text{for}\ y=0.  
	\label{hochu}
\end{align}

Observe that   ${\bf p}^{(2)}$ can be represented as
\begin{equation}
	{\bf p}^{(2)}={\bf p}^{(21)}+ {\bf p}^{(22)},
	\label{splitting_p2}
\end{equation}
where 
\begin{align}
	&p^{(21)}_x=-{\textstyle\frac{q_0\sqrt{q_0^2+1}}{2}} \, e^{y\sqrt{4q_0^2+1}} \,\sin2q_0x, \label{p_21x}\\ 
	&p^{(21)}_y={\textstyle\frac{q_0^2+2\sqrt{q_0^2+1}-1}{4}}\, e^{y\sqrt{4q_0^2+1}} \cos2q_0x+
	{\textstyle\Bigl( by+\frac{2\sqrt{q_0^2+1}-q_0^2-1}{4}\Bigr)}e^y,\label{p_21y}
\end{align}
and ${\bf p}^{(22)}$ solves the problem
\begin{equation}
	\label{p22}
	\begin{cases}
		\Delta{\bf p}^{(22)}={\bf p}^{(22)} \quad &\text{for}\ y<0,\\ 
		p^{(22)}_x=-[\rho^{(2)}]^{\prime}, \quad 
		p^{(22)}_y=-\rho^{(2)} \quad &\text{for}\ y=0.
	\end{cases}
\end{equation}
Bearing in mind \eqref{splitting_p2} we represent the vector ${\bf v}^{(2)}$ as 
\begin{equation}
	{\bf v}^{(2)}={\bf v}^{(21)}+ {\bf v}^{(22)},
\end{equation}
with ${\bf v}^{(21)}$ found explicitly in Appendix \ref{app_v_21}
and ${\bf v}^{(22)}$ satisfying
\begin{equation}
	\label{v22}
	\begin{cases}
		\mu (\Delta {\bf v}^{(22)}+\nabla {\rm div}{\bf v}^{(22)})-\xi {\bf v}^{(22)}\\ 
		\quad\quad\quad\quad -\zeta{\rm div}\bigl({\bf p}^{(22)}\otimes {\bf P}+{\bf P}\otimes {\bf p}^{(22)}\bigr)+\zeta_i {\bf p}^{(22)}=0\quad & \text{for}\ y<0,\\
		\mu \bigl(\partial_x v_y^{(22)}+\partial_y v_x^{(22)}\bigr)=-\zeta [\rho^{(2)}]^{\prime}, 
		&\text{for}\ y=0,  \\ 
		2\mu\partial_y v_y^{(22)}
		=-2\mu \partial^2_{yy} V_y\rho^{(2)}+\gamma [\rho^{(2)}]^{\prime\prime}  &\text{for}\ y=0.
	\end{cases}
\end{equation} 
Then from \eqref{p22}, \eqref{v22} we have
\begin{align}
	v_y^{(22)}=-\frac{\zeta}{2\mu}\rho^{(2)}+\mathcal{L}\rho^{(2)}. 
	\label{vy22withL} 
\end{align}
Substituting expressions $	\rho^{(1)}$, ${\bf v}^{(1)}$, $v_y^{(21)}$ into 
\eqref{hochu} and taking into account \eqref{vy22withL}
we conclude that $\mathcal{L}\rho^{(2)}\in Span\{1, \cos 2q_0x\}$. This in turn implies that $\rho^{(2)}=\beta\cos 2q_0 x$ and we find that constants $\beta$ and $V^{(2)}$ are given by formulas \eqref{beta} and \eqref{bagatoV}, 
see Appendix \ref{app_v_21}. Having established $\rho^{(2)}$ we get the following explicit form of ${\bf p}^{(22)}$:
\begin{equation}
	p_x^{(22)}=2\beta q_0e^{y\sqrt{4q_0^2+1}}\sin 2q_0x, \quad p^{(22)}_y=-\beta e^{y\sqrt{4q_0^2+1}}\cos 2q_0x.
	\label{p_22}
\end{equation}

Now considering term of the order $\alpha^3$ in \eqref{rescaled_system} we get 
\begin{align}
	&\label{eq_p3} \Delta {\bf p}^{(3)}={\bf p}^{(3)}+2b{\bf p}^{(1)}, \quad\quad\quad\quad\quad\quad\quad\quad\quad\quad\quad\quad\quad\quad\quad\quad\quad
	\quad\quad\  \text{for}\ y<0,\\
	&p^{(3)}_x=-\partial_y p_x^{(1)}\rho^{(2)}
	-\partial_y p_x^{(2)}\rho^{(1)}-{\textstyle\frac{1}{2}}\partial^2_{yy} p_x^{(1)}[\rho^{(1)}]^2
	-[\rho^{(3)}]^\prime +{\textstyle\frac{1}{2}}{[\rho^{(1)}]^\prime}^3,
	\nonumber\\
	&p^{(3)}_y=-\partial_y p_y^{(1)}\rho^{(2)}-\partial_y p_y^{(2)}\rho^{(1)} -{\textstyle\frac{1}{2}}\partial^2_{yy} p_y^{(1)}\, [\rho^{(1)}]^2
	-\partial_y P_y\,\rho^{(3)} -\partial^2_{yy}P_y\, \rho^{(1)}\rho^{(2)}\nonumber\\  &\quad\quad\quad-{\textstyle\frac{1}{6}}\partial^3_{yyy}P_y\, [\rho^{(1)}]^3 
	-[\rho^{(1)}]^\prime[\rho^{(2)}]^\prime,  \quad\quad\quad\quad\quad\quad\quad\quad\quad\quad\quad\quad\text{for}\ y=0;\label{bc_p3}
\end{align}
\begin{align}
	&\mu (\Delta {\bf v}^{(3)}+\nabla {\rm div}{\bf v}^{(3)})
	-\zeta {\rm div}\, \bigl({\bf p}^{(3)}\otimes {\bf P}+{\bf P}\otimes {\bf p}^{(3)}+ {\bf p}^{(1)}\otimes{\bf p}^{(2)}+{\bf p}^{(2)}\otimes {\bf p}^{(1)} \bigr),\label{eq_v3}
	\nonumber\\ 
	&\quad\quad\quad 
	-\xi {\bf v}^{(3)}-2\xi b {\bf v}^{(1)}
	+\zeta_i {\bf p}^{(3)} + \zeta_i b {\bf p}^{(1)}=0\quad\quad \quad\quad\quad\quad\quad\quad\quad \text{for}\ y<0,
\end{align}
and for $y=0$ we have
\begin{align}
	&\mu\bigl(\partial_xv_y^{(3)}+\partial_yv_x^{(3)}\bigr)= 
	2\mu\partial^2_{xy}v_x^{(1)}\rho^{(1)}[\rho^{(1)}]^\prime+
	2\mu \partial_x v_x^{(1)}[\rho^{(2)}]^\prime+2\mu \partial_xv_x^{(2)}[\rho^{(1)}]^\prime
	\nonumber\\
	&\quad -\mu \bigl(\partial^2_{xy}v_y^{(1)}+\partial^2_{yy}v_x^{(1)}\bigr)\rho^{(2)}
	-{\textstyle\frac{\mu}{2}}\bigl(\partial^3_{xyy}v_y^{(1)}+\partial^3_{yyy}v_x^{(1)}\bigr)[\rho^{(1)}]^2
	\nonumber\\
	&\quad -\mu \bigl(\partial^2_{xy}v_y^{(2)}+\partial^2_{yy}v_x^{(2)}\bigr)\rho^{(1)}
	-\zeta [\rho^{(3)}]^\prime 
	-\gamma[\rho^{(1)}]^{\prime\prime}[\rho^{(2)}]^\prime
	-\gamma[\rho^{(2)}]^{\prime\prime}[\rho^{(1)}]^\prime, 
	\label{eto_est_predposlednii_cond}
\end{align}
\begin{align}
	&2\mu \partial_yv_y^{(3)}= \mu \bigl(\partial_xv_y^{(1)}+\partial_yv_x^{(1)}\bigr)[\rho^{(2)}]^\prime
	+\mu \bigl(\partial^2_{xy}v_y^{(1)}+\partial^2_{yy}v_x^{(1)}\bigr)\rho^{(1)}[\rho^{(1)}]^\prime 
	\nonumber\\
	&\quad+\mu\bigl(\partial_xv_y^{(2)}+\partial_yv_x^{(2)}\bigr)[\rho^{(1)}]^\prime
	-2\mu \partial^2_{yy}V_y \rho^{(3)}-2\mu \partial^3_{yyy}V_y \rho^{(1)}\rho^{(2)}
	\nonumber\\
	&\quad-{\textstyle\frac{\mu}{3}}  \partial^4_{yyyy}V_y [\rho^{(1)}]^3 -2\mu \partial^2_{yy}v_y^{(1)} \rho^{(2)}
	-\mu \partial^3_{yyy}v_y^{(1)}[\rho^{(1)}]^2-2\mu \partial^2_{yy}v_y^{(2)} \rho^{(1)}
	\nonumber\\
	&\quad+\gamma [\rho^{(3)}]''-\gamma b[\rho^{(1)}]''-{\textstyle\frac{3}{2}}\gamma {[\rho^{(1)}]^\prime}^2[\rho^{(1)}]'',
	\label{eto_est_poslednii_cond}
\end{align}
Finally, collecting terms of the order $\alpha^3$ in \eqref{eqAtheta} we derive that for $y=0$
\begin{align}
	v_y^{(3)}&+\partial_yV_y \rho^{(3)}+\partial^2_{yy}V_y \rho^{(1)}\rho^{(2)}
	+{\textstyle\frac{1}{6}}\partial^3_{yyy}V_y [\rho^{(1)}]^3 +\partial_yv_y^{(1)}\rho^{(2)}
	+{\textstyle\frac{1}{2}}\partial^2_{yy}v_y^{(1)}[\rho^{(1)}]^2 \nonumber\\
	& +\partial_yv_y^{(2)}\rho^{(1)}
	-\partial_yv_x^{(1)}\rho^{(1)}[\rho^{(1)}]^\prime
	-v_x^{(2)}[\rho^{(1)}]^\prime-v_x^{(1)}[\rho^{(2)}]^\prime=0.
	\label{fse} 
\end{align}
Similarly to the previous two steps the solution of \eqref{eq_p3}--\eqref{eto_est_poslednii_cond} 
can be found in an explicit form. However, this task is rather cumbersome. Instead, we only identify the parameter $b$ appearing in the expansion  \eqref{expan_theta} of $\theta$, 
whose sign determines the bifurcation type, $b>0$ ($b<0$) corresponds to the supercritical (subcritical) bifurcation (see \eqref{asymptotic_lambda} and Figure \ref{fig:krasota}). 
We notice that calculations of the Fourier coefficient corresponding to $\cos q_0x$ in \eqref{fse} 
give a linear function of $b$, $k_1b+k_2$, moreover 
\begin{align}
	k_1\cos q_0x= \partial_\theta\mathcal{L}(1)\cos q_0x=\partial_\theta\Lambda(q_0/\theta)\Bigr|_{\theta =1}\cos q_0x=-q_0 \partial_q \Lambda(q_0)\cos q_0x.
	\label{pokraschennya}
\end{align}
To find $k_2$ first we represent $	{\bf p}^{(3)}$, ${\bf v}^{(3)}$ as
\begin{equation}
	{\bf p}^{(3)}={\bf p}^{(31)}+ {\bf p}^{(32)},
	\quad 	{\bf v}^{(3)}={\bf v}^{(31)}+ {\bf v}^{(32)},
\end{equation}
where  ${\bf p}^{(32)}$ and ${\bf v}^{(32)}$ are solutions of problem \eqref{linearized_system} with $\rho=\rho^{(3)}$. Observe that $\bigl.\bigl(v_y^{(32)}+\partial_yV_y \rho^{(3)}\bigr)\bigr|_{y=0}\,$ is orthogonal to $\cos q_0x$ in $L^2(0,\Pi_0)$. 
Next we write ${\bf p}^{(31)}$ and ${\bf v}^{(31)}$ in the form
\begin{equation}
	{\bf p}^{(31)}={\bf p}^{(311)}+{\bf p}^{(312)},\quad {\bf v}^{(31)}={\bf v}^{(311)}+{\bf v}^{(312)},
\end{equation}
where 
\begin{equation*}
	{v}^{(311)}_x =\hat {v}^{(311)}_x(y) \sin q_0 x, \quad  {v}^{(311)}_y =\hat {v}^{(311)}_y(y) \cos q_0 x,
\end{equation*}
while $p^{(312)}_x$  and $v^{(312)}_x$ ($p^{(312)}_y$ and $v^{(312)}_y$) absorbs all the terms that contain the factor $\sin 3q_0x$ ($\cos 3 q_0x$) or/and $b$. 
An explicit form \eqref{v311_x}--\eqref{v311_y} of the vector function ${\bf v}^{(311)}$ is found in Appendixes \ref{predfinish}, \ref{finish}.
Then considering the Fourier coefficients of functions in \eqref{fse} we find (see Appendix \ref{finish_nutochnovzhe})  that for $y=0$
\begin{align}
	v_y^{(311)}& +\Bigl(\beta D_1+\textstyle{\frac{D_2}{8\mu\sqrt{4 q_0^2+{ \textstyle\frac{\xi}{2\mu}}}}}+\textstyle{\frac{1}{4\mu}}\Bigl( -\frac{\zeta}{2}+\zeta \sqrt{q_0^2+1} -\frac{\zeta_i}{8}+\frac{\zeta\xi}{16\mu}-\frac{5\zeta q_0^2}{8}-\beta \gamma q_0^2\Bigr)
	\Bigr. \notag\\
	&\Bigl.+\textstyle{\frac{3\beta}{4}}\Bigl(\frac{\xi\zeta}{2\mu(4\mu+\sqrt{2\mu \xi})} -
	\frac{\zeta_i}{\sqrt{2\mu\xi}+2\mu}+\frac{\zeta}{\mu}
	\Bigr)   -bq_0 \partial_q \Lambda(q_0)  \Bigr)\cos q_0 x =0, \label{ReLaTiOn} 
\end{align}
where $D_1$ and $D_2$ are defined by \eqref{I_znovu_zdraste_D1} and  \eqref{D2}.  Substituting \eqref{v311_y} into \eqref{ReLaTiOn} we get 
equation \eqref{alligator} for finding $b$.


\section{Conclusions}
\label{conclusions_graf}

In this section, we present and discuss several numerical results relevant to 
the bifurcation of nonflat traveling waves.  
Computations are done 
for the typical value of the characteristic length $L_c=25\, \mu m$ \cite{AleBlaCas2019} by adjusting 
to the case of the unit length via a spatial rescaling.

The finger-like pattern in the shape of the traveling wave solution is shown in Figure~\ref{fig:krasotaaaaa}, where the approximate shape corresponding to 
the two-term expansion $\rho=\alpha \rho^{(1)} +\alpha^2 \rho^{(2)}$ ($\alpha=0.5$) is depicted. 
 The shape is computed   
by using explicit formulas  \eqref{nyashechka} for $\rho^{(1)}$ and \eqref{veselytsya_i_psihue_ves_narod2}  for $\rho^{(2)}$, taking some typical values of parameters \cite{AleBlaCas2019}. 

\begin{figure}[h!]
	\begin{center}
		\includegraphics[width=0.5\textwidth]{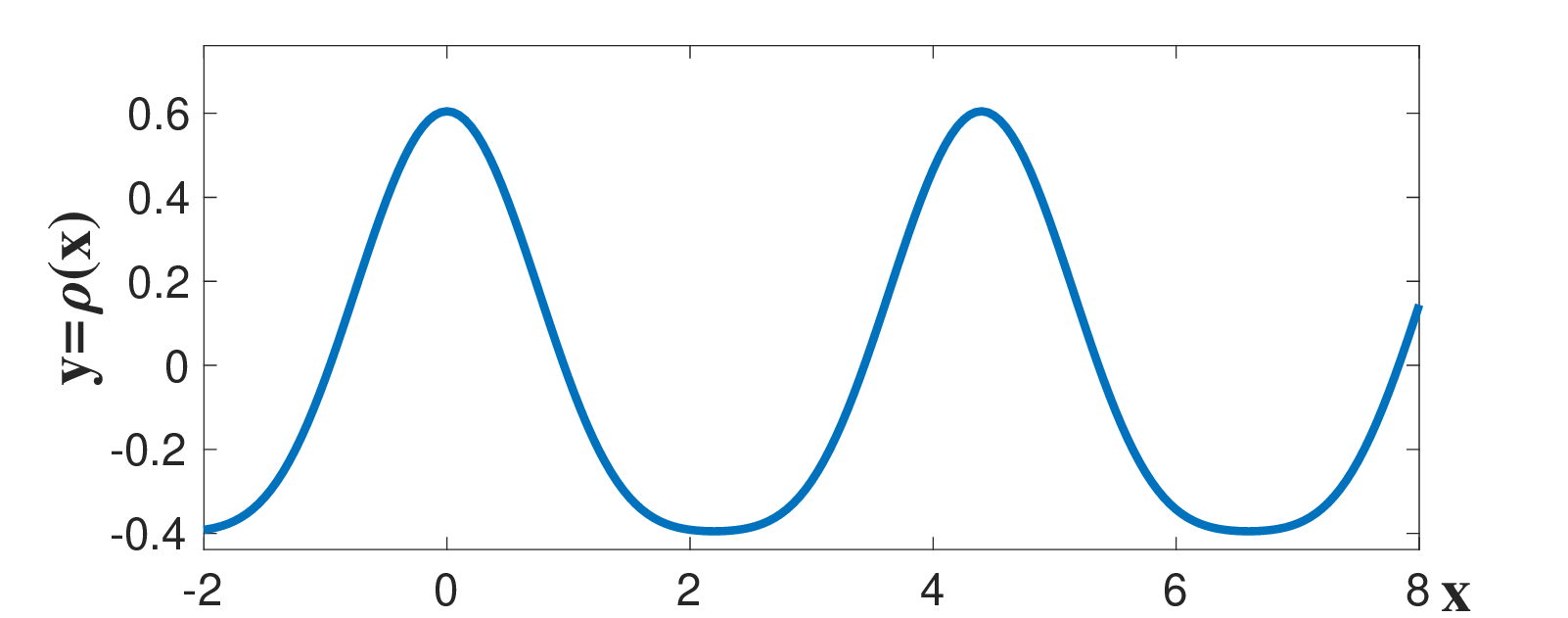}
		\caption{ Approximate shape of the traveling wave. 
			\label{fig:krasotaaaaa}
		}
	\end{center}
\end{figure} 

Figure \ref{fig:krasota} depicts graphs of the eigenvalue $\Lambda(q)$ (growth rate, computed by the formula \eqref{Lambda_formula})
for different values of the intercellular contractility $-\zeta$.  
\begin{figure}[H]
	\begin{center}
		\includegraphics[width=0.5\textwidth]{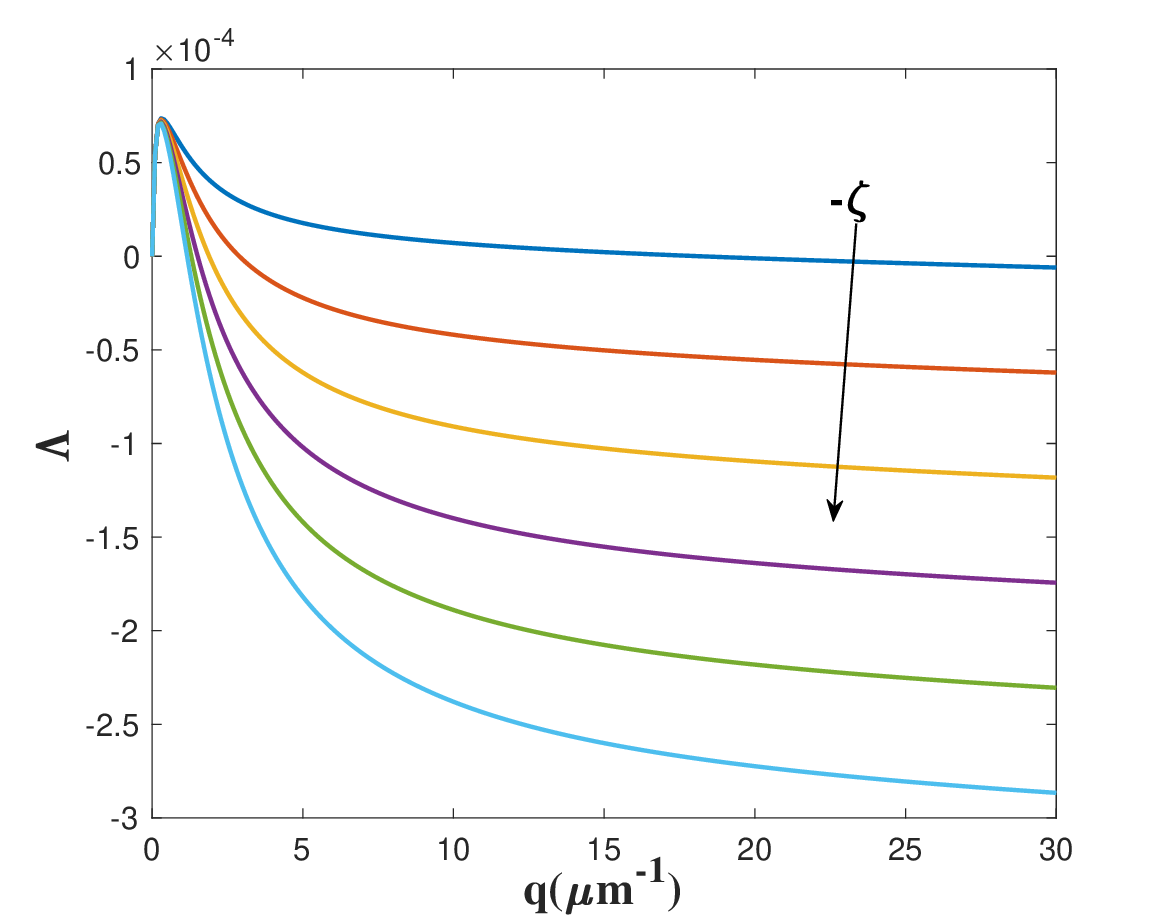}
		\caption{ Growth rate $\Lambda(q)$ as a function of  wave number $q$ for different contractilities $-\zeta$. For this plot $-\zeta=0$, $6$, $12$, $18$, $24$, $30$ (kPa). 
			Other parameters are $\zeta_i =0.1\, {\rm kPa}/{\rm \mu m}$, $\xi= 100 \, {\rm Pa\cdot s}/{\rm \mu m}^2$, $\gamma= 0.2 \, {\rm mN}/{\rm m}$, $\mu = 25\, {\rm MPa}\cdot {\rm s}$.
			\label{fig:krasota}
		}
	\end{center}
\end{figure}

Next, we study the dependence of the critical period $\Pi_0=\frac{2\pi}{q_0}$ on the contractility. It amounts to numerical solving of the equation  $\Lambda(q_0)=0$. 
The results are depicted in the Figure \ref{fig:krasota1}.
 Note that for contractility $-\zeta\sim 20$ kPa  the value of the period is close to one hundred micrometers which is in agreement with the measured finger spacing \cite{Vish2018}.
\begin{figure}[h!]
	\begin{center}
		\includegraphics[width=0.5\textwidth]{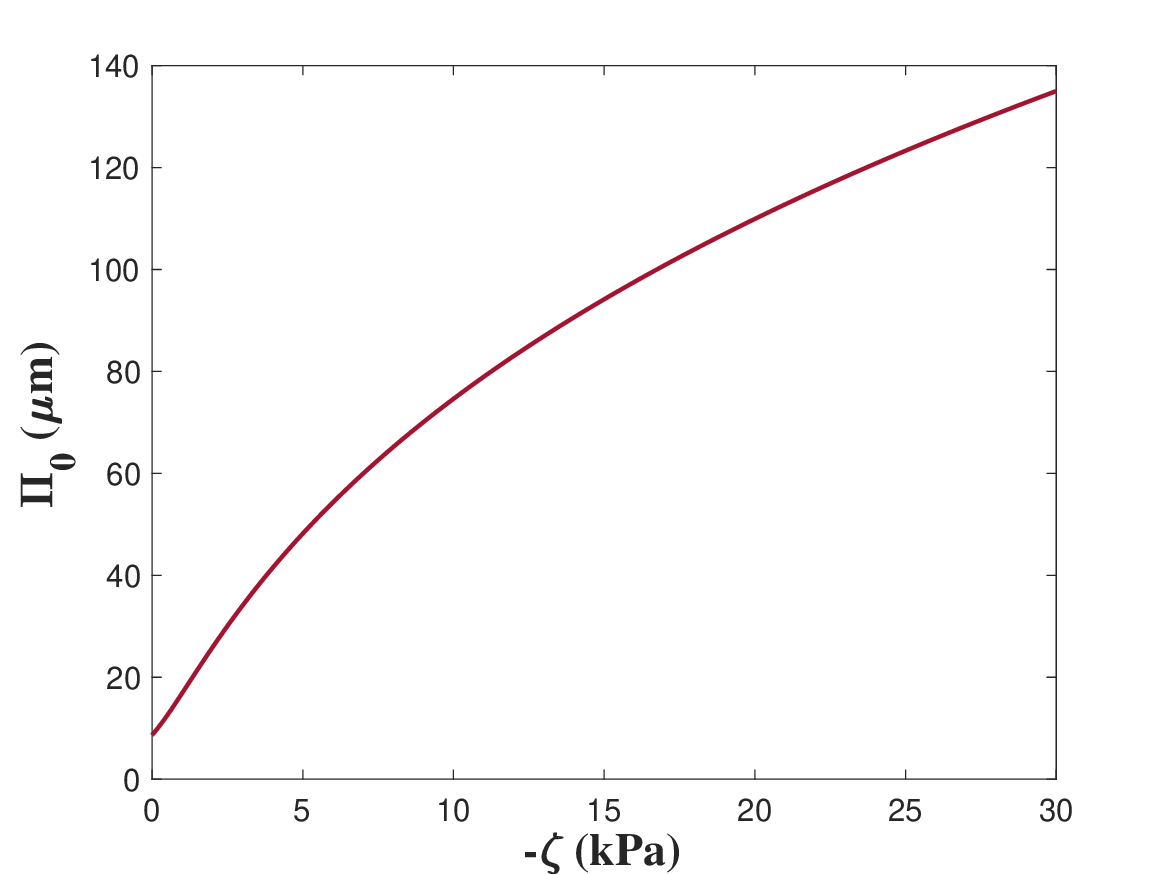}
		\caption{Dependence of the critical period on the contractility coefficient $-\zeta$. Computations carried out for  $\zeta_i =0.1\, {\rm kPa}/{\rm \mu m}$, $\xi= 100 \, {\rm Pa\cdot s}/{\rm \mu m}^2$, $\gamma= 0.2 \, {\rm mN}/{\rm m}$, $\mu = 25\, {\rm MPa}\cdot {\rm s}$
			\label{fig:krasota1}
		}
	\end{center}
\end{figure}

%
Finally Figures \ref{fig:krasota3} and \ref{fig:krasota2} present results of computations of the coefficient $b$. Recall that $b$ is the coefficient in the asymptotic expansion \eqref{expan_theta} of $\theta(\alpha)=\Pi/\Pi_0$. 
It follows from  \eqref{asymptotic_lambda} that the sign of the smallest in absolute value eigenvalue of the operator linearized around the traveling wave solution
coincides  (for sufficiently small $\alpha$)  with the sign of the product $b\partial_q\Lambda(q_0)$, while other nonzero eigenvalues have a negative real part. Since $\partial_q\Lambda(q_0)<0$ (see Figure \ref{fig:krasota}), $b>0$ correspond 
to stable case, while $b<0$ correspond to unstable case. In other words, for $b>0$ we have a supercritical bifurcation, while for $b<0$ we have a  subcritical one. 
Notice that $b<0$ for large values of the contractility as seen from Figure \ref{fig:krasota3}. 
\begin{figure}[h!]
	\begin{center}
		\includegraphics[width=0.5\textwidth]{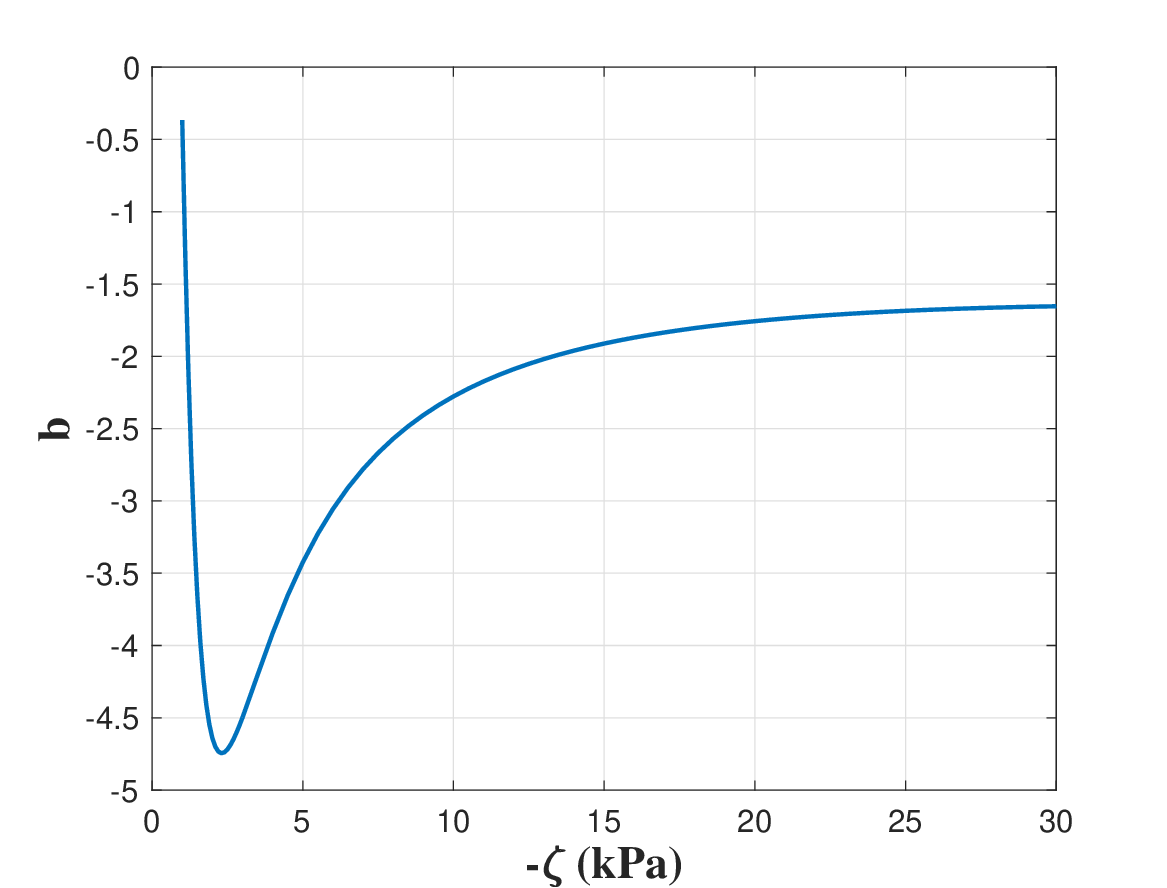}
		\caption{Coefficient $b$ for contractility  $-\zeta$ from $1$ kPa to $30$ kPa. Other parameters 
			are $\zeta_i =0.1\, {\rm kPa}/{\rm \mu m}$, $\xi= 100 \, {\rm Pa\cdot s}/{\rm \mu m}^2$, $\gamma= 0.2 \, {\rm mN}/{\rm m}$, $\mu = 25\, {\rm MPa}\cdot {\rm s}$.
			\label{fig:krasota3}
		}
	\end{center}
\end{figure}
\begin{figure}[h!]
	\begin{center}
		\includegraphics[width=0.5\textwidth]{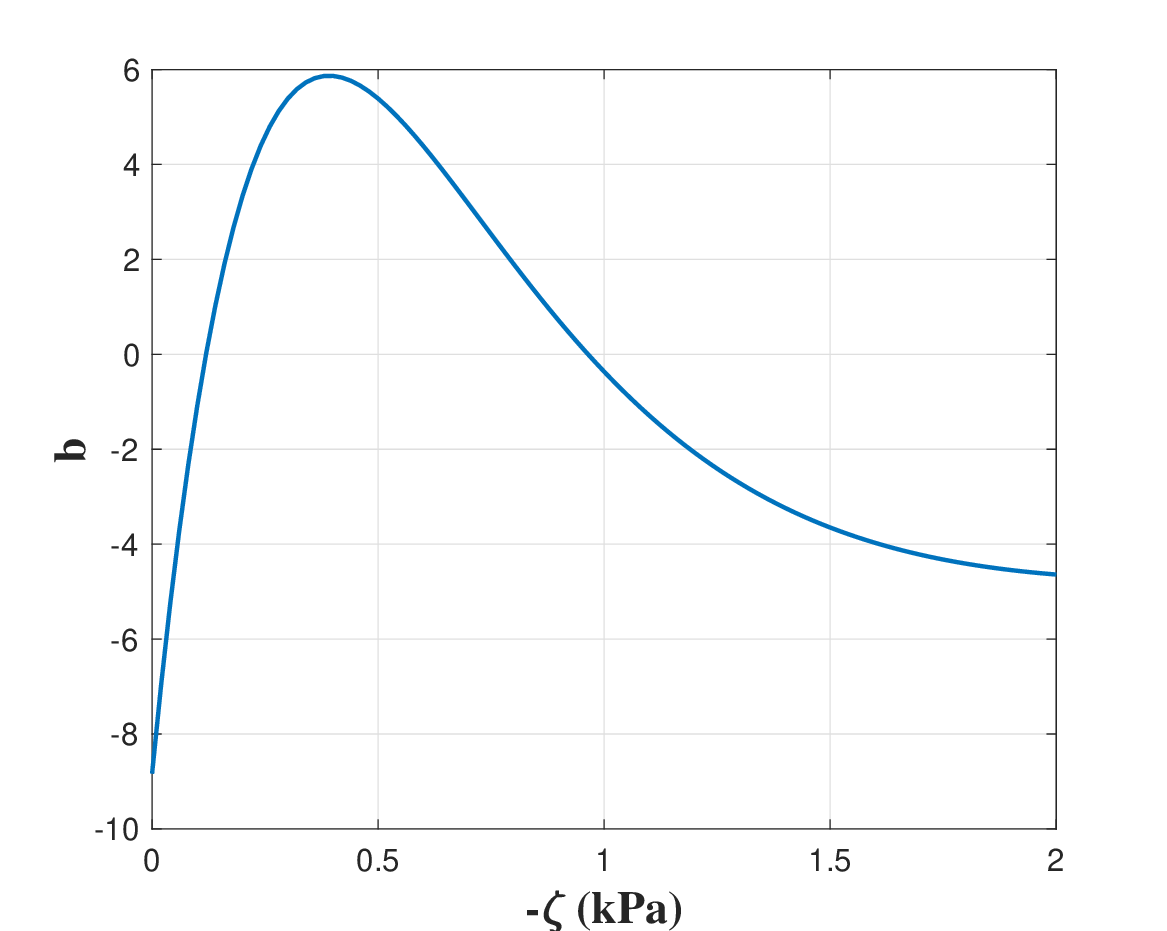}
		\caption{Coefficient $b$ for contractility $-\zeta$ from $0$ to $2$ kPa. Other parameters are $\zeta_i =0.1\, {\rm kPa}/{\rm \mu m}$, $\xi= 100 \, {\rm Pa\cdot s}/{\rm \mu m}^2$, $\gamma= 0.2 \, {\rm mN}/{\rm m}$, $\mu = 25\, {\rm MPa}\cdot {\rm s}$.
			\label{fig:krasota2}
		}
	\end{center}
\end{figure}

\noindent It is interesting to observe in Figure \ref{fig:krasota2} that for smaller $-\zeta$ both cases $b>0$ and $b<0$ occur. Thus the model exhibits both subcritical and supercritical bifurcation.

%
%
%
%


\section{Acknowledgments}
The work of L. Berlyand and C. Safsten was supported by the NSF grant no. DMS-2005262. The work of V. Rybalko 
was partially supported by the SSF grant for Ukrainian scientists Dnr UKR22-0004 and NSF grant no. DMS-2005262.
We are grateful to J. Casademunt, R. Alert, and C. Blanch-Mercader for the very useful discussion on the physical aspects of the problem.

\begin{appendices}

\section{Fourier analysis of the linearized operator}\label{mister_first}

The  solution of \eqref{lin pr1}-\eqref{lin bc2}  can be represented as ${\bf v}= \zeta_i {\bf v}^t +\zeta{\bf v}^c+ \gamma{\bf v}^s$ 
with $ {\bf v}^t$,  ${\bf v}^c$ and ${\bf v}^s$
solving
\begin{equation}
	\label{split 1}	\begin{cases}
		\mu (\Delta  v^t_x+\partial_x {\rm div }  {\bf v}^t)-\xi v^t_x=iq  e^{iqx +\sqrt{q^2+1}y} & \text{for}\ y<0,\\ 
		\mu (\Delta  v^t_y+\partial_y {\rm div } {\bf v}^t)-\xi  v^t_y= e^{iqx +\sqrt{q^2+1}y}  & \text{for}\ y<0,\\
		\mu (\partial_x v^t_y+\partial_y v^t_x)=0  & \text{for}\ y=0,\\
		2\mu \partial_y v^t_y=\frac{\sqrt{2\mu}}{\sqrt{\xi}+\sqrt{2\mu}}e^{iqx} \quad & \text{for} \ y=0,
	\end{cases}
\end{equation}
\begin{equation}
	\label{split 2}
	\begin{cases}
		\mu (\Delta  v^c_x+\partial_x {\rm div }  {\bf v}^c)-\xi v^c_x=-iq(\sqrt{q^2+1}+1) e^{iqx +y+y\sqrt{q^2+1}} & \text{for}\ y<0,\\ 
		\mu (\Delta  v^c_y+\partial_y {\rm div } {\bf v}^c)-\xi  v^c_y &\\ 
		\hskip 0.5 cm =q^2 e^{iqx +y+y\sqrt{q^2+1}} -2(\sqrt{q^2+1}+1)
		e^{iqx +y+y\sqrt{q^2+1}} & \text{for}\ y<0, \\
		\mu (\partial_x v^c_y+\partial_y v^c_x)=-iq e^{iqx} & \text{for}\ y=0,\\
		2\mu \partial_y v^c_y=-\bigg(\frac{\xi}{4\mu+\sqrt{2\mu\xi}}+2\bigg) e^{iqx} & \text{for}\ y=0, 
	\end{cases}
\end{equation}
\begin{equation}
	\label{split 3}
	\begin{cases}
		\mu (\Delta  {\bf v}^s+\nabla {\rm div }  {\bf v}^s)-\xi{\bf v}^s=0& \text{for}\ y<0, \\ 
		\mu (\partial_x v^s_y+\partial_y v^s_x)=0 & \text{for}\ y=0,\\
		2\mu \partial_y v^s_y=-q^2 e^{iqx} \quad& \text{for}\ y=0.
	\end{cases}
\end{equation}
We find explicit solutions to these problems,
starting with problem \eqref{split 1}.  Represent the equations in \eqref{split 1} as 
\begin{align}
	\mu (&\Delta {\bf v}^t+\nabla {\rm div }  {\bf v}^t)-\xi {\bf v}^t\notag \\ 
	&= \Bigl(\sqrt{q^2+1}-q^2\Bigr)  \nabla e^{iqx +\sqrt{q^2+1}y} +
	iq\Bigl(1-\sqrt{q^2+1}\Bigr)  \nabla^\perp e^{iqx +\sqrt{q^2+1}y},		
\end{align}
where $\nabla^\perp= (-\partial_y, \partial_x)$. Then
\begin{align}
	\label{vt}
	\notag	{\bf v}^{t}&= {\textstyle \frac{-q^2 +\sqrt{q^2 +1}}{2\mu-\xi}}\nabla  e^{iqx +\sqrt{q^2+1}y}
	+iq {\textstyle\frac{1-\sqrt{q^2 +1}}{\mu-\xi}}\nabla^\perp e^{iqx +\sqrt{q^2+1}y}\\
	&+A_1\tilde {\bf v}^{(1)}+A_2\tilde {\bf v}^{(2)},
\end{align}
where 
$$
\tilde{\bf v}^{(1)}=\nabla^\perp e^{iqx +y\sqrt{q^2+\xi/\mu}}, \quad \tilde {\bf v}^{(2)}=\nabla e^{iqx +y\sqrt{q^2+\xi/(2\mu)}} 
$$ 
are linearly independent solutions of the corresponding homogenous equations, vanishing as 
$y\to -\infty$. Substituting \eqref{vt} into boundary conditions of \eqref{split 1} we get a linear system for constants $A_1$, $A_2$, resolving which we obtain
\begin{align}
	{\bf v}^{t}&= {\textstyle\frac{\sqrt{q^2+1}-q^2}{2\mu-\xi}}
	\Bigl(\nabla e^{iqx +y\sqrt{q^2+1}} + A_{111} \tilde{\bf v}^{(1)} + A_{112} \tilde{\bf v}^{(2)} \Bigr) \nonumber \\
	&+iq {\textstyle\frac{\sqrt{q^2+1}-1}{\mu-\xi}}
	\Bigl( \nabla^\perp  e^{iqx +y\sqrt{q^2+1}} 
	+ A_{121}\tilde{\bf v}^{(1)}  + A_{122} \tilde{\bf v}^{(2)} \Bigr)\nonumber \\
	&+ A_{131}\tilde{\bf v}^{(1)}  + A_{132} \tilde{\bf v}^{(2)} \label{v_t}
\end{align}
with
\begin{equation}
	\notag
	A_{111} =-{\textstyle\frac{2\mu iq \sqrt{q^2+1}}{D(q,\mu,\xi)}}\Bigl(2\mu q^2+\xi- 2\mu \sqrt{q^2+\xi/(2\mu)}\sqrt{q^2+1}\Bigr),
\end{equation}
\begin{equation}
	\notag
	A_{112} ={\textstyle\frac{2\mu \sqrt{q^2+1}}{D(q,\mu,\xi)}}\Bigl( (2\mu q^2+\xi) \sqrt{q^2+1}-2\mu q^2 \sqrt{q^2+\xi/\mu}  \Bigr),
\end{equation}
\begin{equation}
	\notag
	A_{121}={\textstyle\frac{2\mu^2}{D(q,\mu,\xi)} \sqrt{q^2+\xi/(2\mu)} }\Bigl( (2q^2+1)\sqrt{q^2+\xi/(2\mu)}-2q^2\sqrt{q^2+1}\Bigr),
\end{equation}
\begin{equation}
	\notag
	A_{122}=-{\textstyle\frac{iq\mu}{D(q,\mu,\xi)}}\Bigl( 2\mu(2q^2+1) \sqrt{q^2+\xi/\mu} - 2(2\mu q^2+\xi) \sqrt{q^2+1}\Bigr),
\end{equation}
\begin{equation}
	\notag
	A_{131}={\textstyle\frac{-2\mu i q \sqrt{q^2+\xi/(2\mu)}}{D(q,\mu,\xi)}\frac{\sqrt{2\mu}}{\sqrt{\xi}+\sqrt{2\mu}}},\quad
	A_{132}={\textstyle\frac{-(2\mu q^2 +\xi)}{D(q,\mu,\xi)}\frac{\sqrt{2\mu}}{\sqrt{\xi}+\sqrt{2\mu}}}
\end{equation}
and $D(q,\mu,\xi)$ given by \eqref{det_D}.
Analogously we find solutions ${\bf v}^{c}$, ${\bf v}^{s}$ to problems \eqref{split 2} and
\eqref{split 3}. We have
\begin{align}
	{\bf v}^{c}&=
	{\textstyle \frac{q^2-\sqrt{q^2+1}-1}{4\mu(\sqrt{q^2+1}+1)-\xi}}
	\Bigl( \nabla e^{iqx+y+y\sqrt{q^2+1}}  + A_{211} \tilde{\bf v}^{(1)}  
	+A_{212} \tilde{\bf v}^{(2)} \Bigr) \nonumber \\
	&+ {\textstyle\frac{q^3}{(\xi -2\mu(\sqrt{q^2+1}+1))(\sqrt{q^2+1}+1)}}
	\Bigl( \nabla^\perp e^{iqx+y+y\sqrt{q^2+1}}  + A_{221} \tilde{\bf v}^{(1)} 
	+A_{222} \tilde{\bf v}^{(2)}\Bigr)\notag\\
	&+ A_{231} \tilde{\bf v}^{(1)} 
	+A_{232} \tilde{\bf v}^{(2)}, \label{v_c}
\end{align}
where
\begin{equation}
	\notag
	A_{211}=-{\textstyle\frac{2\mu iq (\sqrt{q^2+1}+1)}{D(q,\mu,\xi)}}\Bigl(2\mu q^2+\xi- 2\mu \sqrt{q^2+\xi/(2\mu)}\bigl(\sqrt{q^2+1}+1\bigr)\Bigr),
\end{equation}
\begin{equation}
	\notag
	A_{212}={\textstyle\frac{2\mu( \sqrt{q^2+1}+1)}{D(q,\mu,\xi)}}\Bigl((2\mu q^2+\xi) \bigl(\sqrt{q^2+1}+1\bigr) -2\mu q^2 \sqrt{q^2+\xi/\mu} \Bigr),
\end{equation}
\begin{equation}
	\notag
	A_{221}={\textstyle\frac{4\mu^2\sqrt{q^2+\xi/(2\mu)}}{D(q,\mu,\xi)}}  \Bigl( \sqrt{q^2+\xi/(2\mu)}\bigl(q^2+\sqrt{q^2+1}+1\bigr)
	-q^2\bigl(\sqrt{q^2+1}+1\bigr)\Bigr),
\end{equation}
\begin{equation}
	\notag
	A_{222}=-{\textstyle\frac{2\mu iq}{D(q,\mu,\xi)}}\Bigl(2 \mu \sqrt{q^2+\xi/\mu}\bigl(q^2+\sqrt{q^2+1}+1\bigr) - 	(2\mu q^2+\xi) \bigl(\sqrt{q^2+1}+1 \bigr) \Bigr),
\end{equation}
\begin{equation}
	\notag
	A_{231}={\textstyle\frac{2\mu iq}{D(q,\mu,\xi)}}\sqrt{q^2+\xi/(2\mu)}\Bigl({\textstyle\frac{\xi}{4\mu+\sqrt{2\mu \xi}}} - \sqrt{q^2+\xi/(2\mu)} +2 \Bigr),
\end{equation}
\begin{equation}
\notag
A_{232}={\textstyle\frac{1}{D(q,\mu,\xi)}}
\Bigl( 
(2\mu q^2+\xi)\Bigl( {\textstyle\frac{\xi}{4\mu+\sqrt{2\mu \xi}}} +2\Bigr) - 2\mu q^2 \sqrt{q^2+\xi/\mu} \Bigr),
\end{equation}
and 
\begin{align}
\label{v_s}
{\bf v}^{s}={\textstyle\frac{2\mu i q^3 \sqrt{q^2+\xi/(2\mu)}}{D(q,\mu,\xi)}} 
\tilde{\bf v}^{(1)} 
+{\textstyle\frac{q^2 (2\mu q^2+\xi)}{D(q,\mu,\xi)}}  \tilde{\bf v}^{(2)}.
\end{align}

Notice that ${\bf} v^{(1)}$ appearing in the first order term of the expansion \eqref{expan_v} can be obtain by taking the real part of ${\bf v}= \zeta_i {\bf v}^t +\zeta{\bf v}^c+ \gamma{\bf v}^s$. This yields 
$v_y^{(1)}\left.\right|_{y=0}=\Bigl(\zeta_i\Lambda^t(q_0,\mu,\xi)+\zeta\Lambda^c(q_0,\mu,\xi)+\gamma\Lambda^s(q_0,\mu,\xi)\Bigr)\cos q_0 x$ (with $\Lambda^{t,c,s}(q_0,\mu,\xi)$ given by \eqref{Lambda_t}--\eqref{Lambda_s}). For $q_0$ satisfying \eqref{eq_for_q0} this formula is simplified to
\begin{align}
v_y^{(1)}\left.\right|_{y=0}=-\frac{\zeta}{2\mu}\cos q_0 x. 
\label{nyashechka} 
\end{align}
To find higher order terms in the expansions \eqref{expan_theta}-\eqref{expan_v} we will also need to compute $v_x^{(1)}\left.\right|_{y=0}$. Although  
an explicit formula for $v_x^{(1)}$ is available via \eqref{v_t}--\eqref{v_s}, we can derive a more compact expression in the case $q=q_0$. Recall that ${\bf v}^{(1)}$ solves equations
\begin{align}
\label{pervoedlyapervogo}
\mu \bigl(\Delta  v_x^{(1)}+\partial_x {\rm div }  {\bf v}^{(1)}\bigr)&-\xi v_x^{(1)}
=-q_0 \zeta_i e^{y\sqrt{q_0^2+1}}\sin q_0x \nonumber\\ 
&+q_0\zeta\Bigl(\textstyle{\sqrt{q_0^2+1}}+1\Bigr) e^{y+y\sqrt{q_0^2+1}}\sin q_0x, 
\end{align}
\begin{align}
\label{vtoroedlyapervogo}
\mu \bigl(\Delta  v_y^{(1)}+\partial_y {\rm div } {\bf v}^{(1)}\bigr)&-\xi  v_y^{(1)}=\zeta_i  e^{y\sqrt{q_0^2+1}} \cos q_0x+q_0^2\zeta e^{y+y\sqrt{q_0^2+1}}\cos q_0x \nonumber\\
& -2\zeta\Bigl(\textstyle{\sqrt{q_0^2+1}}+1\Bigr)
e^{y+y\sqrt{q_0^2+1}}\cos q_0x  
\end{align}
with boundary conditions for $y=0$
\begin{align}
&\mu (\partial_x v_y^{(1)}+\partial_y v_x^{(1)})=\zeta q_0 \sin q_0x \label{bc1} \\
&
2\mu \partial_y v_y^{(1)}
=\Bigl(\textstyle{-\zeta\Bigl(\frac{\xi}{4\mu+\sqrt{2\mu\xi}}+2\Bigr)  +\zeta_i\frac{\sqrt{2\mu}}{\sqrt{\xi}+\sqrt{2\mu}}  -\gamma q^2}  \Bigr)
\cos q_0x . 	\label{bc2}
\end{align}
It follows from \eqref{vtoroedlyapervogo} that for $y=0$ 
\begin{align}
\label{d_yy v_y^1}
2\mu \partial^2_{yy}v_y^{(1)}=\bigl(\mu q_0^2+\xi\bigr)v_y^{(1)}-\mu \partial^2_{xy}v_x^{(1)}+\Bigl(\zeta_i+
q_0^2\zeta-2\zeta\bigl(\textstyle{\sqrt{q_0^2+1}}+1\bigr)\Bigr)\cos q_0x.
\end{align}
On the other hand differentiating 
\eqref{bc1}  in $x$ we get 
$	\mu \partial^2_{xy}v_x^{(1)}=\mu q_0^2v_y^{(1)} +q_0^2\zeta\cos q_0x$ for $y=0$,
and substituting \eqref{nyashechka} 
we obtain
\begin{align}
\label{39}
\mu \partial^2_{xy}v_x^{(1)}\Bigr|_{y=0}=\frac{q_0^2\zeta}{2}\cos q_0x.
\end{align}
Therefore \eqref{d_yy v_y^1} yields 
\begin{align}
\label{40}
2\mu \partial^2_{yy}v_y^{(1)}\Bigr|_{y=0}=\Bigl(\zeta_i-2\zeta\Bigl(\textstyle{\sqrt{q_0^2+1}}+1+\textstyle{\frac{\xi}{4\mu}}\Bigr)\Bigr)\cos q_0x.
\end{align}

Now we find ${\rm div}{\bf v}^{(1)}$. From \eqref{pervoedlyapervogo}--\eqref{vtoroedlyapervogo} 
\begin{align}
\label{eq div v}
2\mu \Delta{\rm div}{\bf v}^{(1)}&-\xi{\rm div}{\bf v}^{(1)} 
=\zeta_i\Bigl(\textstyle{\sqrt{q_0^2+1}} -q_0^2\Bigr) \cos q_0x \, e^{y\sqrt{q_0^2+1}}
\nonumber\\ 
&+2\zeta\Bigl(q_0^2\textstyle{\sqrt{q_0^2+1}}-2\textstyle{\sqrt{q_0^2+1}}-2 \Bigr)\cos q_0x \, e^{y+y\sqrt{q_0^2+1}}, 
\end{align}
also by  \eqref{39}--\eqref{40} we have for $y=0$ 
\begin{align}
\label{bc_div_v}
\mu \partial_y {\rm div} {\bf v}^{(1)}=\Bigl(\textstyle{\frac{\zeta_i}{2}}-\zeta\Bigl(\sqrt{q_0^2+1}+1+
\textstyle{\frac{\xi}{4\mu}- \frac{q_0^2}{2}}\Bigr)\Bigr)\cos q_0x.
\end{align}
One can find an explicit solution to the problem \eqref{eq div v}--\eqref{bc_div_v}, in particular

\begin{equation}
\label{Div_v1}
{\rm div}{\bf v}^{(1)}\bigl.\bigr|_{y=0}=D_1\cos q_0 x,
\end{equation}
where
\begin{equation}
\begin{aligned}
	\label{I_znovu_zdraste_D1}
	D_1&=
	\textstyle{	\frac{\zeta_i\Bigl(\sqrt{q_0^2+{\textstyle \frac{\xi}{2\mu}}}+ q_0^2\Bigr)}
		{2\mu\sqrt{q_0^2+{\textstyle \frac{\xi}{2\mu}}}\Bigl(\sqrt{q_0^2+{\textstyle \frac{\xi}{2\mu}}}+\sqrt{q_0^2+1}\Bigr)} }
	\\
	&-\textstyle{\frac{\zeta}{ {\mu\sqrt{q_0^2+{\textstyle \frac{\xi}{2\mu}}}}}} 
	\Biggl(
	\sqrt{q_0^2+1}+1+\frac{\xi}{4\mu}- \frac{q_0^2}{2}+ \textstyle{\frac{q_0^2\sqrt{q_0^2+1}-2\sqrt{q_0^2+1}-2}{\sqrt{q_0^2+1}+1+\sqrt{q_0^2+{\textstyle \frac{\xi}{2\mu}}}}}
	\Biggr).
\end{aligned}
\end{equation}
Then taking into account \eqref{bc2} 
\begin{align}
\label{pervayatrudnyashtuka}
\partial_x v_x^{(1)}=\Bigl({\textstyle D_1+ \frac{\zeta\xi}{2\mu\left(4\mu+\sqrt{2\mu\xi}\right)}-\frac{\zeta_i}{\sqrt{2\mu\xi}+2\mu} +\frac{2\zeta +\gamma q_0^2}{2\mu}}\Bigr)\cos q_0 x,
\end{align}
and 
\begin{align}
\label{pervayatrudnyashtuka_v_x}
v_x^{(1)}={\textstyle \frac{1}{q_0}}\Bigl({\textstyle D_1+ \frac{\zeta\xi}{2\mu\left(4\mu+\sqrt{2\mu\xi}\right)}-\frac{\zeta_i}{\sqrt{2\mu\xi}+2\mu} +\frac{2\zeta +\gamma q_0^2}{2\mu}}\Bigr)\sin q_0 x.
\end{align}


\section{ Calculations of ${\bf v}^{(2)}$ and $\rho^{(2)}$}\label{app_v_21}  
From \eqref{v^2_eq}   and taking into account explicit formulas \eqref{p_21x}--\eqref{p_21y} for ${\bf p}^{(21)}$ we have that ${\bf v}^{(21)}$ satisfies the equations 
\begin{align}
&\mu (\Delta v_x^{(21)}+\partial_x {\rm div}{\bf v}^{(21)})-\xi v_x^{(21)} \label{pr_v21_1}\\
&\quad\quad\quad ={\textstyle-\zeta_i p_x^{(21)} -\zeta  \frac{q_0\sqrt{q_0^2+1}(1+\sqrt{4q_0^2+1})}{2}}  e^{y+y\sqrt{4q_0^2+1}} \sin2q_0x \nonumber\\  
&\quad\quad \quad+\zeta q_0\bigl(q_0^2-\textstyle{\sqrt{q_0^2+1}}\bigr) e^{2y\sqrt{q_0^2+1}} \sin2q_0x, 	\nonumber  \\ 
&\mu (\Delta v_y^{(21)}+\partial_y {\rm div}{\bf v}^{(21)})-\xi v_y^{(21)} \label{pr_v21_2}\\ 
&\quad =2b\xi V_y-b\zeta_i P_y -\zeta_i p_y^{(21)}+\zeta \bigl(\textstyle{\sqrt{q_0^2+1}}-q_0^2\bigr) e^{2y\sqrt{q_0^2+1}} \cos2q_0x \nonumber\\  
&\quad+\zeta 
\biggl( {\textstyle\frac{(q_0^2+2\sqrt{q_0^2+1}-1)(1+\sqrt{4q_0^2+1})}{2}}-q_0^2\textstyle{\sqrt{q_0^2+1}}\biggr)
e^{y+y\sqrt{4q_0^2+1}} \cos2q_0x \nonumber\\  
&\quad +\zeta \textstyle{\sqrt{q_0^2+1}}e^{2y\sqrt{q_0^2+1}} +\zeta \bigl(2b+4by+2\textstyle{\sqrt{q_0^2+1}}-q_0^2-1\bigr) e^{2y}.	 \nonumber
\end{align}
Moreover, from \eqref{v^2_boundarycondition_first}--\eqref{v^2_boundarycondition_second} using 
\eqref{UrForVy}, \eqref{pervoedlyapervogo}, \eqref{bc1}, \eqref{40}, \eqref{pervayatrudnyashtuka} we find the boundary conditions for ${\bf v}^{(21)}$ as $y=0$:
$$
\mu (\partial_x v_y^{(21)}+\partial_y v_x^{(21)})=T_x\sin 2q_0x 
$$
\begin{equation}
2\mu\partial_y v_y^{(21)}
= T_y^{(1)}+ T_y^{(2)}\cos2q_0x, \notag
\end{equation} 
where
\begin{flalign*}
&T_x=\textstyle{\frac{q_0}{2}}\Bigl(\zeta_i -\zeta \sqrt{q_0^2+1}-\zeta\Bigr)
-\textstyle{\frac{\gamma q_0^3}{2}}\\
&\quad\quad
-\textstyle{\frac{1}{2q_0}}\left(4\mu q_0^2+\xi\right)
\Bigl(D_1+ \textstyle{\frac{\zeta\xi}{2\mu\left(4\mu+\sqrt{2\mu\xi}\right)}}-\textstyle{\frac{\zeta_i}{\sqrt{2\mu\xi}+2\mu}} +\textstyle{\frac{2\zeta +\gamma q_0^2}{2\mu}}\Bigr),\notag\\
&T_y^{(1)}=\zeta\Bigl(\textstyle{\sqrt{q_0^2+1}}-\textstyle{\frac{q_0^2}{2}}+\textstyle{\frac{\xi}{8\mu}}\Bigr)-\textstyle{\frac{1}{4}}\zeta_i, \quad
T_y^{(2)}=\zeta\left(\sqrt{q_0^2+1}+\textstyle{\frac{q_0^2}{2}}+\textstyle{\frac{\xi}{8\mu}} \right)-\textstyle{\frac{1}{4}}\zeta_i.\nonumber
\end{flalign*}
We represent ${\bf v}^{(21)}$ in the following way
\begin{align}
\label{explicit_v21x}
v_x^{(21)}&=\left(B^{(1)}_xe^{y\sqrt{4q_0^2+1}}+B_x^{(2)}e^{y+y\sqrt{4q_0^2+1}}
+B_x^{(3)}e^{2y\sqrt{q_0^2+1}}\right)\sin 2q_0x \nonumber\\
&-\left(2Eq_0 e^{y\sqrt{4q_0^2+\xi/(2\mu)}} +F\textstyle{\sqrt{4q_0^2+\xi/\mu}}e^{y\sqrt{4q_0^2+\xi/\mu}}\right)\sin 2q_0x, 
\end{align}
\begin{align}
\label{explicit_v21y}
v_y^{(21)}&=\left(B^{(1)}_ye^{y\sqrt{4q_0^2+1}}+B_y^{(2)}e^{y+y\sqrt{4q_0^2+1}}
+B_y^{(3)}e^{2y\sqrt{q_0^2+1}}\right)\cos 2q_0x \nonumber\\
&+\left(E\textstyle{\sqrt{4q_0^2+\xi/(2\mu)}} e^{y\sqrt{4q_0^2+\xi/(2\mu)}} +2q_0Fe^{y\sqrt{4q_0^2+\xi/\mu}}\right)\cos 2q_0x \nonumber\\
&+B_y^{(4)}e^{y}+B_y^{(5)}ye^{y}+B_y^{(6)}e^{2y}+B_y^{(7)}ye^{2y}+B_y^{(8)}e^{2y\sqrt{q_0^2+1}}\nonumber\\ 
&+ B_y^{(9)}ye^{y\sqrt{\xi}/\sqrt{2\mu}}+G e^{y\sqrt{\xi}/\sqrt{2\mu}}
\end{align}
where
\begin{flalign*}
B^{(1)}_x =\textstyle{\frac{\zeta_i}{4\mu^2-6\mu\xi+2\xi^2}}
\Bigl( \bigl(\mu(4q_0^2+2)-\xi\bigr)q_0\sqrt{q_0^2+1}-
\mu q_0\sqrt{4q_0^2+1}\bigl(q_0^2+2\sqrt{q_0^2+1}-1\bigr)\Bigr), 
&&
\end{flalign*}
\begin{flalign*}
&B_x^{(2)}=-\frac{\zeta q_0\sqrt{q_0^2+1}\bigl(1+\sqrt{4q_0^2+1}\bigr)\bigl(4\mu(q_0^2+\sqrt{4q_0^2+1}+1)-\xi\bigr)}{64\mu^2q_0^2+4(8\mu^2-3\mu\xi)(1+\sqrt{4q_0^2+1})+2\xi^2} \nonumber\\
&\quad\quad\quad+\frac{\zeta q_0\mu (1+\sqrt{4q_0^2+1})
		\bigl( (q_0^2+2\sqrt{q_0^2+1}-1)(1+\sqrt{4q_0^2+1})-2q_0^2\sqrt{q_0^2+1}\bigr)}
	{32\mu^2q_0^2+2(8\mu^2-3\mu\xi)(1+\sqrt{4q_0^2+1})+\xi^2},&&&&&&&&&&&&&&&
\end{flalign*}
\begin{flalign*}
B_x^{(3)}=\textstyle{\frac{\zeta\bigl(q_0^2-\sqrt{q_0^2+1}\bigr)}{32\mu^2-12\mu\xi+\xi^2}}
\Bigl(4\mu q_0^3+(8\mu-\xi)q_0-4\mu q_0\sqrt{q_0^2+1}\Bigr),&&
\end{flalign*}
\begin{flalign*}
B^{(1)}_y=\textstyle{\frac{\zeta_i}{2\mu^2-3\mu\xi+\xi^2}}
\Bigl( -\mu q_0^2 \sqrt{4q_0^2+1}\sqrt{q_0^2+1}-\frac{1}{4}\bigl(\mu(1-4q_0^2)-\xi\bigr)
\bigl(q_0^2+2\sqrt{q_0^2+1}-1\bigr)\Bigr),&&
\end{flalign*}
\begin{flalign*}
&B_y^{(2)}= \frac{\zeta q_0^2\mu \sqrt{q_0^2+1}\bigl(4q_0^2+2\sqrt{4q_0^2+1}+2\bigr)}
	{32\mu^2q_0^2+2(8\mu^2-3\mu\xi)(1+\sqrt{4q_0^2+1})+\xi^2}
\nonumber \\
& +\frac{\zeta \bigl(\mu(-4q_0^2+2\sqrt{4q_0^2+1}+2)-\xi\bigr)
		\bigl( (q_0^2+2\sqrt{q_0^2+1}-1)(1+\sqrt{4q_0^2+1})-2q_0^2\sqrt{q_0^2+1}\bigr)}
	{64\mu^2q_0^2+4(8\mu^2-3\mu\xi)(1+\sqrt{4q_0^2+1})+2\xi^2},&&&&&&&
\end{flalign*}
\begin{flalign*}
B_y^{(3)}=\textstyle{\frac{\zeta\bigl(q_0^2-\sqrt{q_0^2+1}\bigr)}{32\mu^2-12\mu\xi+\xi^2}}
\Bigl(-4\mu q_0^2\sqrt{q_0^2+1}+4\mu q_0^2-4\mu+\xi\Bigr),&&
\end{flalign*}
\begin{flalign*}
B_y^{(4)}= -\textstyle{\frac{\zeta_i}{2\mu-\xi}}
\Bigl(\textstyle{\frac{2\sqrt{q_0^2+1}-q_0^2-1}{4}}-b\Bigr),
\quad
B_y^{(5)}=-\frac{\zeta_ib}{2\mu-\xi},&&
\end{flalign*}
\begin{flalign*}
B_y^{(6)}=\textstyle{\frac{\zeta}{8\mu-\xi}}
\Bigl(2\sqrt{q_0^2+1}-q_0^2-1-2b\Bigr),
\quad B_y^{(7)}=\frac{4\zeta b}{8\mu-\xi},&&
\end{flalign*}
\begin{flalign*}
B_y^{(8)}=\textstyle{\frac{\zeta \sqrt{q_0^2+1}}{8\mu(q_0^2+1)-\xi}},
\quad\quad
B_y^{(9)}=\frac{\zeta_i b}{2\mu-\xi}-\frac{\zeta b\xi }{2\mu (8\mu-\xi)}, &&
\end{flalign*}
\begin{align*}
E=& -\textstyle{\frac{1}{D(2q_0)}}\biggl(4\mu q_0 \sqrt{4q_0^2+\xi/\mu}
\left(-\mu \Bigl(B^{(1)}_x\sqrt{4q_0^2+1}+B_x^{(2)}\bigl(1+\sqrt{4q_0^2+1}\bigr)
+2B_x^{(3)}\sqrt{q_0^2+1}\Bigr)\right.\biggr.\nonumber \\
&\quad\quad\quad\quad\quad\biggl.\left.+2\mu q_0 \bigl(B^{(1)}_y+B_y^{(2)}+B_y^{(3)}\bigr) +T_x \right)\biggr.\nonumber \\ 
&\biggl. +(8\mu q_0^2+\xi)
\Bigl(-2\mu \Bigl( B^{(1)}_y\textstyle{\sqrt{4q_0^2+1}}+B_y^{(2)}\bigl(1+\textstyle{\sqrt{4q_0^2+1}}\bigr)
+2B_y^{(3)}\sqrt{q_0^2+1}\Bigr)+ T_y^{(2)}\Bigr)\biggr), \nonumber 
\end{align*}
\begin{align*}
F=& \textstyle{\frac{1}{D(2q_0)}}\biggl( (8\mu q_0^2+\xi)
\Bigl(-\mu \left(B^{(1)}_x\sqrt{4q_0^2+1}+B_x^{(2)}\bigl(1+\sqrt{4q_0^2+1}\bigr)
+2B_x^{(3)}\sqrt{q_0^2+1}\right)\Bigr.\biggr.\nonumber \\
&\quad\quad\quad\quad\quad\quad\quad\quad\quad\biggl.\Bigl.+2\mu q_0 \bigl(B^{(1)}_y+B_y^{(2)}+B_y^{(3)}\bigr) +T_x \Bigr)\biggr.\nonumber \\ 
&\biggl. +4\mu q_0 \textstyle{\sqrt{4q_0^2+\xi/(2\mu)}}
\Bigl(-2\mu \Bigl( B^{(1)}_y\textstyle{\sqrt{4q_0^2+1}}+B_y^{(2)}\bigl(1+\textstyle{\sqrt{4q_0^2+1}}\bigr)
+2B_y^{(3)}\textstyle{\sqrt{q_0^2+1}}\Bigr)+ T_y^{(2)}\Bigr)\biggr), \nonumber 
\end{align*}
\begin{align*}
G=\textstyle{\frac{1}{\sqrt{2\mu\xi}}} \, T_y^{(1)}
-\sqrt{\textstyle{\frac{2\mu}{\xi}}}\left( B_y^{(4)}+B_y^{(5)}+2B_y^{(6)}+B_y^{(7)}+2B_y^{(8)}\sqrt{q_0^2+1}+B_y^{(9)}\right).\nonumber 
\end{align*}

From 	\eqref{hochu} we get
\begin{align}
v_y^{(22)}&+\textstyle{\frac{\zeta}{2\mu}}\rho^{(2)}
\label{tyzhelayanonuzhnayaformula} \\
&=-v_y^{(21)}+\textstyle{\frac{1}{2}}\bigl( \partial^2_{yy}V_y +\textstyle{\frac{\gamma q_0^2}{\mu}}  \bigr)\cos^2q_0 x -  q_0 v_x^{(1)}\sin q_0x+V^{(2)} 
\quad\text{for} \ y=0.\nonumber
\end{align}
Observe that the right-hand side of \eqref{tyzhelayanonuzhnayaformula} 
is a linear combination of a constant function and $\cos 2q_0x$. Then it follows from the spectral representation \eqref{spectral_representation} (for $\mathcal{L}=\mathcal{L}(1)$) that
$\rho^{(2)}=\beta \cos 2q_0x$. Moreover,
using \eqref{UrForVy}, \eqref{bc2}, \eqref{pervayatrudnyashtuka}, \eqref{explicit_v21x}--\eqref{explicit_v21y} we obtain
\begin{align}
\beta&= -\textstyle{\frac{1}{ \Lambda(2q_0)}}
\left(B^{(1)}_y+B_y^{(2)}+B_y^{(3)}+E\sqrt{4q_0^2+\xi/(2\mu)}+2q_0F\right) \nonumber\\
&+\textstyle{\frac{1}{ 2\Lambda(2q_0)}}\Bigl(D_1+\textstyle{\frac{\gamma q_0^2}{\mu}}\Bigr)+\textstyle{\frac{3}{4\Lambda(2q_0)}}\biggl(
\textstyle{\frac{\zeta\xi}{2\mu \bigl(4\mu+\sqrt{2\mu\xi}\bigr)}}-\textstyle{\frac{\zeta_i}{\sqrt{2\mu\xi}+2\mu}}+
\textstyle{\frac{\zeta}{\mu}}\biggr),
\label{beta}
\end{align}
and
\begin{align}
V^{(2)}&=B_y^{(4)}
+B_y^{(6)}+B_y^{(8)}+G +\textstyle{\frac{1}{2}}D_1+\textstyle{\frac{1}{4}}\Bigl(
\textstyle{\frac{\zeta\xi}{2\mu \bigl(4\mu+\sqrt{2\mu\xi}\bigr)}}-\textstyle{\frac{\zeta_i}{\sqrt{2\mu\xi}+2\mu}}+
\textstyle{\frac{\zeta}{\mu}}\Bigr)\nonumber
\\ 
&=-b V^{(0)}+\Bigl(2\textstyle{\sqrt{q_0^2+1}}-q_0^2-1\Bigr)\Bigl( \textstyle{\frac{\zeta_i}{4 \bigl(\sqrt{2\mu\xi}+\xi\bigr)}}- \textstyle{\frac{\zeta}{2\sqrt{2\mu\xi}+\xi}\Bigr)}
\nonumber\\
&
-\textstyle{\frac{\zeta\sqrt{q_0^2+1}}{2\sqrt{2\mu\xi}\sqrt{q_0^2+1}+\xi}} +\textstyle{\frac{1}{\sqrt{2\mu\xi}}} \, T_y^{(1)}+\textstyle{\frac{1}{2}}D_1+\textstyle{\frac{1}{4}}\Bigl(
\frac{\zeta\xi}{2\mu \bigl(4\mu+\sqrt{2\mu\xi}\bigr)}-\frac{\zeta_i}{\sqrt{2\mu\xi}+2\mu}+\frac{\zeta}{\mu}\Bigr)
\nonumber\\
&=:V^{(21)}-b V^{(0)}.
\label{bagatoV}
\end{align}

Next, we establish the boundary values of components of ${\bf v}^{(2)}$ and some of their derivatives on the line $y=0$,
that will be used in  \eqref{fse}  and in the calculations of the stress vector for ${\bf v}^{(3)}$. To this end substitute the expressions
for components of ${\bf V}$, ${\bf p}^{(1)}$ and ${\bf p}^{(2)}$ in \eqref{v^2_eq} (see  \eqref{Vy}, \eqref{p_1}, \eqref{splitting_p2}--\eqref{p_21y}, \eqref{p_22}):
\begin{align}
&\mu (\Delta v_x^{(2)}+\partial_x {\rm div}{\bf v}^{(2)})-\xi v_x^{(2)} 
=\zeta_i q_0\Bigl(\textstyle{\frac{\sqrt{q_0^2+1}}{2}}-2\beta\Bigr) \, e^{y\sqrt{4q_0^2+1}} \,\sin2q_0x \nonumber\\  
&\quad-\zeta q_0\bigl(1+\textstyle{\sqrt{4q_0^2+1}}\bigr) \Bigl(\textstyle{\frac{\sqrt{q_0^2+1}}{2}}-2\beta \Bigr) e^{y+y\sqrt{4q_0^2+1}} \sin2q_0x \nonumber\\  
&\quad+\zeta q_0\bigl(q_0^2-\textstyle{\sqrt{q_0^2+1}}\bigr) e^{2y\sqrt{q_0^2+1}} \sin2q_0x,
\label{strashni_rivnyannya_dlya_v^2_1}
\end{align}
\begin{align}
\mu &(\Delta v_y^{(2)}+\partial_y {\rm div}{\bf v}^{(2)})-\xi v_y^{(2)}
=\zeta_i \Bigl(\beta-\textstyle{\frac{q_0^2+2\sqrt{q_0^2+1}-1}{4}}\Bigr)\, e^{y\sqrt{4q_0^2+1}} \cos2q_0x \nonumber\\ 
& 	-\zeta_i \Bigl(\textstyle{\frac{2b\xi}{2\mu-\xi}}+b+
by+\textstyle{\frac{2\sqrt{q_0^2+1}-q_0^2-1}{4}}\Bigr) e^y+2b\xi\Bigl( \textstyle{\frac{\zeta_i}{2\mu-\xi}}\textstyle{\frac{\sqrt{2\mu}}{\sqrt{\xi}}}-
\textstyle{\frac{\zeta}{8\mu-\xi}}\textstyle{\frac{\sqrt{\xi}}{\sqrt{2\mu}}}\Bigr) e^{\sqrt{\xi}y/\sqrt{2\mu}}
\nonumber\\  
&+\zeta\Bigl(\textstyle{\frac{4b\xi}{8\mu-\xi}}+2b+4by+2\sqrt{q_0^2+1}-q_0^2-1\Bigr)e^{2y}
\label{strashni_rivnyannya_dlya_v^2_2}\\ 
&+\zeta \Bigl(\textstyle{\sqrt{q_0^2+1}}-q_0^2\Bigr) e^{2y\textstyle{\sqrt{q_0^2+1}}} \cos2q_0x +\zeta\sqrt{q_0^2+1} e^{2y\sqrt{q_0^2+1}}
\nonumber\\  
&+\zeta 
\Bigl( \textstyle{\frac{(q_0^2+2\sqrt{q_0^2+1}-1 -4\beta)(1+\sqrt{4q_0^2+1})}{2}}-q_0^2\sqrt{q_0^2+1}
+4\beta q_0^2\Bigr)
e^{y+y\sqrt{4q_0^2+1}} \cos2q_0x. \nonumber
\end{align}
Let ${\bf w}^{(2)}(x,y)$ denote the vector function  obtained by subtracting from ${\bf v}^{(2)}(x,y)$ its average in $x$ over the period.
Find the divergence of  ${\bf w}^{(2)}(x,y)$ for $y=0$. Taking derivative of   \eqref{v^2_boundarycondition_first} in $x$ and using \eqref{pervayatrudnyashtuka}, \eqref{pervoedlyapervogo} we obtain that
\begin{align}
\mu&\bigl(\partial^2_{xx} w_y^{(2)}+\partial^2_{xy} w_x^{(2)}\bigr)\notag\\
&=
\bigl(\zeta_i q_0^2 - \zeta q_0^2 \bigl(\textstyle{\sqrt{q_0^2+1}}+1\bigr)\bigr)\cos 2q_0 x+\left(4\zeta q_0^2 \beta -\gamma q_0^4\right) \cos 2q_0 x
\label{veselytsya_i_psihue_ves_narod1}
\\
& -(4\mu q_0^2+\xi)
\Bigl(D_1+ \textstyle{\frac{\zeta\xi}{2\mu\left(4\mu+\sqrt{2\mu\xi}\right)}}-
\textstyle{\frac{\zeta_i}{\sqrt{2\mu\xi}+2\mu}} +\textstyle{\frac{2\zeta +\gamma q_0^2}{2\mu}}\Bigr)\cos 2q_0 x, 	\notag	
\end{align}
for $y=0$, also  thanks to \eqref{tyzhelayanonuzhnayaformula} we have
\begin{align}
v_y^{(2)}&=-\textstyle{\frac{\zeta\beta}{2\mu}} \cos 2q_0x
+\textstyle{\frac{1}{2}}\Bigl(D_1+ \textstyle{\frac{3\zeta\xi}{4\mu\left(4\mu+\sqrt{2\mu\xi}\right)}}-
\textstyle{\frac{3\zeta_i}{2\sqrt{2\mu\xi}+4\mu}} +
\textstyle{\frac{6\zeta +4\gamma q_0^2}{4\mu}}\Bigr)		\cos 2 q_0x \notag \\
&-\textstyle{\frac{1}{2}} D_1+\textstyle{\frac{1}{4}}\Bigl(\textstyle{\frac{\zeta_i}{\sqrt{2\mu\xi}+2\mu}}-
\textstyle{\frac{\zeta\xi}{2\mu\left(4\mu+\sqrt{2\mu\xi}\right)}} -\textstyle{\frac{\zeta}{\mu}}\Bigr)+V^{(2)}.
\label{veselytsya_i_psihue_ves_narod2}
\end{align}
Since
$$
\mu (\Delta w_y^{(2)}+\partial_y {\rm div}{\bf w}^{(2)})-\xi w_y^{(2)}=
2  \mu \partial_y {\rm div} {\bf w}^{(2)}-
\mu\bigl(\partial^2_{xx} w_y^{(2)}+\partial^2_{xy} w_x^{(2)}\bigr) - \bigl(8\mu q_0^2+\xi \bigr)w^{(2)}_y,
$$
from \eqref{strashni_rivnyannya_dlya_v^2_2}--\eqref{veselytsya_i_psihue_ves_narod2} we get the following boundary condition for $y=0$:
\begin{align}
\label{krokodilchik}
&2\mu\partial_y {\rm div} {\bf w}^{(2)} =\bigl(2\mu q_0^2 -\textstyle{\frac{\xi}{4}}\bigr) 
\Bigl( \textstyle{\frac{\zeta\xi}{2\mu\left(4\mu+\sqrt{2\mu\xi}\right)}}-
\textstyle{\frac{\zeta_i}{\sqrt{2\mu\xi}+2\mu}} +\textstyle{\frac{\zeta}{\mu}}\Bigr)\cos2q_0x\nonumber\\
&\quad+ \Bigl(-\textstyle{\frac{\xi}{2}}D_1  +\gamma q_0^4+\zeta_i q_0^2+2\zeta q_0^2\bigl(4\beta-\sqrt{q_0^2+1}-1\bigr)+\zeta \sqrt{q_0^2+1} \Bigr)
\cos2q_0x\\
&\quad +\Bigl(\zeta	\textstyle{\frac{1+\sqrt{4q_0^2+1}}{2}}-\textstyle{\frac{\zeta_i}{4}} \Bigr)
\Bigl(q_0^2+2\sqrt{q_0^2+1}-1-4\beta\Bigr)\cos2q_0x.\notag
\end{align}
Then using \eqref{strashni_rivnyannya_dlya_v^2_1}--\eqref{strashni_rivnyannya_dlya_v^2_2} we obtain that $ {\rm div} {\bf w}^{(2)}$ satisfies the equation  
\begin{align}
&2\mu \Delta {\rm div} {\bf w}^{(2)}-\xi {\rm div} {\bf w}^{(2)} 
= 2\zeta \Bigl(q_0^2-\textstyle{\sqrt{q_0^2+1}}\Bigr)^2 e^{2y\sqrt{q_0^2+1}} \cos 2q_0x  \nonumber\\ 
&+\zeta_i \Bigl( q_0^2 \bigl( \textstyle{\sqrt{q_0^2+1}} -4\beta\bigr) +\textstyle{\frac{1}{4}} \textstyle{\sqrt{4q_0^2+1}}\bigl(4\beta -q_0^2 -2\sqrt{q_0^2+1} +1\bigr)\Bigr) \, e^{y\sqrt{4q_0^2+1}}\cos 2q_0x 
\nonumber\\  
& +\textstyle{\frac{\zeta}{2}} \bigl(1+\textstyle{\sqrt{4q_0^2+1}}\bigr)
\Bigl(  \bigl(q_0^2+2\sqrt{q_0^2+1}-1-4\beta\bigr)(1+\textstyle{\sqrt{4q_0^2+1}})
\nonumber\\
&\quad\quad \quad \quad\quad\quad \quad \quad\quad \Bigl. -2 q_0^2\textstyle{\sqrt{q_0^2+1}}+8\beta q_0^2\Bigr)
e^{y+y\sqrt{4q_0^2+1}} \cos2 q_0x, \nonumber
\end{align}
solving which with boundary condition \eqref{krokodilchik} we derive 
\begin{align}
\label{div_v2}
{\rm div} {\bf w}^{(2)}\bigl|_{y=0}\bigr.=\textstyle{\frac{D_2}{2\mu\sqrt{4 q_0^2+{ \textstyle\frac{\xi}{2\mu}}}}} \cos 2q_0x,
\end{align}
where 
\begin{align}
\label{D2}
&D_2= 
-\textstyle{\frac{\xi}{2}}D_1  +\gamma q_0^4+\zeta_i q_0^2+2\zeta q_0^2\bigl(4\beta-\sqrt{q_0^2+1}-1\bigr)+\zeta \sqrt{q_0^2+1} 
\notag\\
&+ (2\mu q_0^2 -\xi/4) 
\Bigl( \textstyle{\frac{\zeta\xi}{2\mu\left(4\mu+\sqrt{2\mu\xi}\right)}}-
\textstyle{\frac{\zeta_i}{\sqrt{2\mu\xi}+2\mu}} +\textstyle{\frac{\zeta}{\mu}}\Bigr)\notag\\
& + \textstyle{\frac{1}{4}}\bigl(2\zeta+2\zeta\sqrt{4q_0^2+1}-\zeta_i\bigr)
\bigl(q_0^2+2\sqrt{q_0^2+1}-1-4\beta\bigr)\nonumber\\
&
+\textstyle{\frac{\zeta_i}{ \bigl(\sqrt{4 q_0^2+{ \textstyle\frac{\xi}{2\mu}}}
	+\sqrt{4q_0^2+1}\bigr)}}\Bigl( 4\beta q_0^2- q_0^2 \sqrt{q_0^2+1}  +\frac{1}{4} \sqrt{4q_0^2+1}\bigl( q_0^2 +2\sqrt{q_0^2+1}-1-4\beta\bigr)\Bigr)\nonumber\\  
&-\textstyle{\frac{2\zeta}{\bigl(\sqrt{4 q_0^2+{ \textstyle\frac{\xi}{2\mu}}}+2\sqrt{q_0^2+1}\bigr)}} \bigl(q_0^2-\sqrt{q_0^2+1}\bigr)^2 
\nonumber\\
&-\textstyle{\frac{\zeta\bigl(1+\sqrt{4q_0^2+1}\bigr)}{\bigl(\sqrt{4 q_0^2+{ \textstyle\frac{\xi}{2\mu}}}+1+\sqrt{4q_0^2+1}\bigr)}}
\Bigl(4\beta q_0^2 -q_0^2\sqrt{q_0^2+1}
\Bigr.\nonumber\\
& \quad\quad\quad \quad \quad\quad\quad \quad \quad\quad \Bigl. + \textstyle{\frac{1}{2}}\bigl(q_0^2+2\sqrt{q_0^2+1}-1-4\beta\bigr)\bigl(1+\sqrt{4q_0^2+1}\bigr)\Bigr).
\end{align}
Now using \eqref{v^2_boundarycondition_second} we get 
\begin{align}
\partial_y v^{(2)}_y&=\textstyle{\frac{1}{2\mu}}\Bigl( -\textstyle{\frac{3}{4}}\zeta q_0^2 -\textstyle{\frac{\zeta_i}{4}} +\zeta \sqrt{q_0^2+1}+\textstyle{\frac{\zeta\xi}{8\mu}} \Bigr)
+\textstyle{\frac{1}{2\mu}}\Bigl(
\textstyle{\frac{3\zeta}{4}}
q_0^2+\textstyle{\frac{\zeta\xi}{8\mu}}+\zeta\sqrt{q_0^2+1}  \nonumber\\
&-2\mu \beta \Bigl(\textstyle{\frac{\xi\zeta}{2\mu\bigl(4\mu+\sqrt{2\mu \xi}\bigr)}} -
\textstyle{\frac{\zeta_i}{\sqrt{2\mu\xi}+2\mu}}+\textstyle{\frac{\zeta}{\mu}}
\Bigr)
-4\gamma q_0^2 \beta   -\textstyle{\frac{\zeta_i}{4}} 
\Bigr) \cos 2 q_0 x,
\label{before_bezymyashka}
\end{align}
therefore
\begin{align}
v^{(2)}_x&=\textstyle{\frac{\sin 2 q_0 x}{4q_0\mu}}\Bigl( 
\textstyle{	\frac{D_2}{\sqrt{4 q_0^2+{ \textstyle\frac{\xi}{2\mu}}}}} -
\textstyle{	\frac{\zeta}{4}}
\Bigl( 3q_0^2+\textstyle{\frac{\xi}{2\mu}}+4\sqrt{q_0^2+1} -4\beta\bigl( \textstyle{\frac{\xi}{4\mu+\sqrt{2\mu \xi}}}+2\bigr)\Bigr)\nonumber\\
&+4\gamma q_0^2 \beta -\zeta_i  
\Bigl( \textstyle{\frac{\beta\sqrt{2\mu}}{\sqrt{\xi}+\sqrt{2\mu}}}-\textstyle{\frac{1}{4}} \Bigr)	\Bigr).
\label{bezymyashka}
\end{align}   


\section{Representations of ${\bf p}^{(3)}$ and ${\bf v}^{(3)}$} \label{predfinish}
The vector functions ${\bf p}^{(3)}, {\bf v}^{(3)}$ appearing at 
the order $\alpha^3$ in the expansions \eqref{expan_p}, \eqref{expan_v} are represented as 
\begin{equation}
{\bf p}^{(3)}={\bf p}^{(311)}+{\bf p}^{(312)}+ {\bf p}^{(32)}, \ \ 
{\bf v}^{(3)}={\bf v}^{(311)}+{\bf v}^{(312)}+ {\bf v}^{(32)},
\end{equation}
and to find the coefficient $b$ in \eqref{expan_theta} we need to calculate only ${\bf p}^{(311)}$ and ${\bf v}^{(311)}$, whose $x$-component ($y$-component) contains all terms with the factor $\sin q_0x$ ($\cos q_0x$), except for those additionally having the multiplier $b$. It follows from  \eqref{bc_p3} that on the line $y=0$
\begin{align}
p&^{(311)}_x=
\Bigl(\beta q_0\textstyle{\frac{\sqrt{q_0^2+1}}{2}}+
\sqrt{4q_0^2+1}\bigl(\textstyle{\frac{q_0}{4}}\sqrt{q_0^2+1}- \beta q_0\bigr)-\textstyle{\frac{q_0^3}{2}}-\textstyle{\frac{q_0}{8}}\Bigr)\sin q_0x
\end{align}
\begin{align}
p^{(311)}_y&=
\Bigl(\textstyle{\frac{\beta}{2}}\bigl(\sqrt{q_0^2+1}-2q_0^2-1\bigr)-
\textstyle{\frac{\sqrt{q_0^2+1}}{2}} +\textstyle{\frac{5}{8}}q_0^2+\textstyle{\frac{1}{2}}\nonumber\\
&	
-\textstyle{\sqrt{4q_0^2+1}}\Bigl(\textstyle{\frac{\sqrt{q_0^2+1}}{4}}+
\textstyle{\frac{q_0^2}{8}}-\textstyle{\frac{1}{8}}-\textstyle{\frac{\beta}{2}}
\Bigr) \Bigr)\cos q_0x.
\end{align}
Then since $\Delta {\bf p}^{(311)}={\bf p}^{(311)}$ we have ${\bf p}^{(311)}(x,y)={\bf p}^{(311)}(x,0)e^{y\sqrt{q_0^2+1}}$. We substitute this expression for  ${\bf p}^{(311)}$ in place of ${\bf p}^{(3)}$ in \eqref{eq_v3} and use \eqref{p_1} and  \eqref{splitting_p2}--\eqref{p_21y} with \eqref{p22} to conclude that components of  ${\bf v}^{(311)}$ satisfy the following equations for $y<0$ 
\begin{align}
\mu (\Delta& v_x^{(311)}+\partial_x{\rm div}{\bf v}^{(311)})-\xi v_x^{(311)}\nonumber\\ 
&=\bigl(-\zeta_i H^{(1)}_x+\zeta H^{(2)}_x e^y	+\zeta H^{(3)}_xe^{y\sqrt{4q_0^2+1}}\bigr)e^{y\sqrt{q_0^2+1}}\sin q_0x,
\nonumber
\end{align}
\begin{align}
\mu (\Delta& v_y^{(311)}+\partial_x{\rm div}{\bf v}^{(311)})-\xi v_y^{(311)}
\nonumber\\ 
& =\bigl(-\zeta_i H^{(1)}_y+\zeta H^{(2)}_y e^y	+\zeta H^{(3)}_ye^{y\sqrt{4q_0^2+1}}\bigr)e^{y\sqrt{q_0^2+1}}\cos q_0x,\nonumber
\end{align}
where
\begin{align}
H^{(1)}_x&=
\beta q_0\textstyle{\frac{\sqrt{q_0^2+1}}{2}}+
\textstyle{\sqrt{4q_0^2+1}}\Bigl(\textstyle{\frac{q_0}{4}}\textstyle{\sqrt{q_0^2+1}}- \beta q_0\Bigr)-\textstyle{\frac{q_0^3}{2}}-\textstyle{\frac{q_0}{8}}, \nonumber
\end{align}
\begin{align}
H^{(1)}_y&=
\textstyle{\frac{\beta}{2}}\bigl(\textstyle{\sqrt{q_0^2+1}}-2q_0^2-1\bigr)-
\textstyle{\frac{\sqrt{q_0^2+1}}{2}}
+\textstyle{\frac{5}{8}}q_0^2+\textstyle{\frac{1}{2}}\\
&-\textstyle{\sqrt{4q_0^2+1}}\Bigl(\textstyle{\frac{\sqrt{q_0^2+1}}{4}}+
\textstyle{\frac{q_0^2}{8}}-\textstyle{\frac{1}{8}}-\textstyle{\frac{\beta}{2}}	\Bigr),  	\nonumber
\end{align}
\begin{align}
H^{(2)}_x&= 
q_0\bigl(1+\textstyle{\sqrt{q_0^2+1}}\bigr)\Bigl(
\frac{\sqrt{q_0^2+1}}{2}-\frac{q_0^2}{4}-\frac{1}{4}\Bigr)\nonumber\\
&+\bigl(1+\textstyle{\sqrt{q_0^2+1}}\bigr)	\Bigl(\beta q_0
\textstyle{\frac{\sqrt{q_0^2+1}}{2}}+
\textstyle{\sqrt{4q_0^2+1}}\Bigl(\textstyle{\frac{q_0}{4}}\textstyle{\sqrt{q_0^2+1}}- \beta q_0\Bigr)-\textstyle{\frac{q_0^3}{2}}-\textstyle{\frac{q_0}{8}}\Bigr),\nonumber
\end{align}
\begin{align}
H^{(2)}_y&= 
\Bigl(\textstyle{\frac{\sqrt{q_0^2+1}}{2}}	-\textstyle{\frac{q_0^2}{4}}-\textstyle{\frac{1}{4}}\Bigr)
\bigl(
q_0^2-2\bigl(1+\textstyle{\sqrt{q_0^2+1}}\bigr)\Bigr)\nonumber\\
&+q_0\Bigl(\beta q_0\textstyle{\frac{\sqrt{q_0^2+1}}{2}}+
\textstyle{\sqrt{4q_0^2+1}}\Bigl(\textstyle{\frac{q_0}{4}}\sqrt{q_0^2+1}- \beta q_0\Bigr)-\textstyle{\frac{q_0^3}{2}}-\textstyle{\frac{q_0}{8}}\Bigr)\nonumber\\
&+2\bigl(1+\textstyle{\sqrt{q_0^2+1}}\bigr)
\Bigl( \textstyle{\frac{\beta}{2}}\bigl(\textstyle{\sqrt{q_0^2+1}}-2q_0^2-1\bigr)-
\textstyle{\frac{\sqrt{q_0^2+1}}{2}}
+\textstyle{\frac{5}{8}}q_0^2+\textstyle{\frac{1}{2}}
\Bigr.\nonumber\\	&\Bigl.-\textstyle{\sqrt{4q_0^2+1}}\Bigl(\textstyle{\frac{\sqrt{q_0^2+1}}{4}}+
\textstyle{\frac{q_0^2}{8}}-\textstyle{\frac{1}{8}}-\textstyle{\frac{\beta}{2}}
\Bigr) \Bigr),\nonumber
\end{align}
\begin{align}
H^{(3)}_x&= 
\textstyle{\frac{q_0^3}{2}}\textstyle{\sqrt{q_0^2+1}} -2\beta q_0^3+\bigl(\textstyle{\sqrt{q_0^2+1}}+\textstyle{\sqrt{4q_0^2+1}}\bigr)\Bigl(
\textstyle{\frac{q_0}{8}}-\textstyle{\frac{q_0^3}{8}}-\textstyle{\frac{\beta q_0}{2}}\Bigr),\notag
\end{align}
\begin{align}
H^{(3)}_y&= 
\textstyle{\frac{q_0^2}{8}}-\textstyle{\frac{q_0^4}{8}}-\textstyle{	\frac{\beta q_0^2}{2}} 
-\bigl(\textstyle{\sqrt{q_0^2+1}}+\textstyle{\sqrt{4q_0^2+1}}\bigr)\Bigl(
\textstyle{\frac{q_0^2}{4}}+\textstyle{\frac{\sqrt{q_0^2+1}}{2}}-\textstyle{\frac{1}{4}}-\beta\Bigr).\nonumber
\end{align}

We also have boundary conditions for $y=0$:
\begin{align}
&\mu (\partial_xv_y^{(311)}+\partial_y v_x^{(311)})= Q\sin q_0x,\label{bound_cond_v311first}\\ 
&2\mu \partial_yv_y^{(311)}=R\cos q_0x,	\label{bound_cond_v311second}\
\end{align}
where constants $Q, R$ are obtained from \eqref{eto_est_predposlednii_cond}--\eqref{eto_est_poslednii_cond} (see computations in Appendix \ref{finish}).
The solution ${\bf v}^{311}$ is represented as follows:  
\begin{align}
\label{v311_x}
v_x^{(311)}&=
\bigl( I_x^{(1)}e^{y}+ I_x^{(2)}
+I_x^{(3)}e^{y\sqrt{4q_0^2+1}}\bigr) e^{y\sqrt{q_0^2+1}}\sin q_0x\nonumber\\	
&-\Bigl(q_0 M e^{y\sqrt{q_0^2+\xi/(2\mu)}} +\textstyle{\sqrt{q_0^2+\textstyle \frac{\xi}{\mu}}} N e^{y\sqrt{q_0^2+\xi/\mu}}
\Bigr)\sin q_0x, 
\end{align}
\begin{align}
\label{v311_y}
v_y^{(311)}&=\bigl( I_y^{(1)}e^{y}+ I_y^{(2)}
+I_y^{(3)}e^{y\sqrt{4q_0^2+1}}\bigr) e^{y\sqrt{q_0^2+1}} \cos q_0x \notag\\
&+\left(  \textstyle{ \sqrt{q_0^2+\frac{\xi}{2\mu}}} M e^{y\sqrt{q_0^2+\xi/(2\mu)}}+q_0 N e^{y\sqrt{q_0^2+\xi/\mu}} \right)\cos q_0x,
\end{align}
where 
\begin{align}
I^{(1)}_x&= \textstyle{\frac{\zeta}{d_1}} 
\left( H_x^{(2)}\Bigr(\mu\bigl(q_0^2+4\textstyle{\sqrt{q_0^2+1}}+4\bigr)-\xi \Bigl) +
H_y^{(2)}\mu q_0\bigl(1+\textstyle{\sqrt{q_0^2+1}}\bigr)  \right), \nonumber\\
I^{(1)}_y&=
\textstyle{\frac{\zeta}{d_1}} 
\left( H_y^{(2)}\Bigl(\mu\bigl(2\textstyle{\sqrt{q_0^2+1}}+2-q_0^2\bigr)-\xi \Bigr) -H_x^{(2)}\mu q_0\bigl(1+\textstyle{\sqrt{q_0^2+1}}\bigr)
\right),\nonumber
\end{align}
\begin{align}
d_1=8\mu^2 q_0^2 +(16\mu^2 -6\mu\xi)\bigl(1+\textstyle{\sqrt{q_0^2+1}}\bigr)+\xi^2,\nonumber
\end{align}
\begin{align}
I^{(2)}_x&=-\textstyle{\frac{\zeta_i}{2\mu^2-3\mu\xi+\xi^2}} \left( H_x^{(1)}(\mu q_0^2+2\mu-\xi)+H_y^{(1)}\mu q_0\sqrt{q_0^2+1}  \right),\nonumber
\\
I^{(2)}_y&=-\textstyle{\frac{\zeta_i}{2\mu^2-3\mu\xi+\xi^2} }
\left(
H_y^{(1)}(\mu-\mu q_0^2-\xi)-H_x^{(1)}\mu q_0\sqrt{q_0^2+1} \nonumber
\right),
\end{align}
\begin{align}
&I^{(3)}_x=\textstyle{\frac{\zeta}{d_2}}\left(\bigl(9\mu q_0^2+4\mu +4\mu\sqrt{4q_0^4+5q_0^2+1}-\xi\bigr) H_x^{(3)}	\right. \nonumber\\ 
& \quad\quad \quad \quad \quad \ \left.+\mu q_0 \bigl(\textstyle{\sqrt{q_0^2+1}}+\textstyle{\sqrt{4q_0^2+1}}\bigr) H_y^{(3)}\right),\nonumber\\
&I^{(3)}_y=\textstyle{\frac{\zeta}{d_2}}\left(\bigl(3\mu q_0^2+2\mu +2\mu\sqrt{4q_0^4+5q_0^2+1}-\xi\bigr) H_y^{(3)}	\right. \nonumber\\ 
& \quad\quad \quad \quad \quad \ \left.-\mu q_0 \bigl(\textstyle{\sqrt{q_0^2+1}}+\textstyle{\sqrt{4q_0^2+1}}\bigr) H_x^{(3)}\right),\nonumber\\
& d_2=4\mu q_0^2 \bigl(16\mu q_0^2 +18\mu-3\xi \bigr)
\nonumber\\
&\quad \quad+2\mu\bigl(16\mu q_0^2 +8\mu-3\xi \bigr)\textstyle{\sqrt{4q_0^4+5q_0^2+1}}+16\mu^2-6\mu\xi+\xi^2.
\end{align}
The last two terms
in \eqref{v311_x} and \eqref{v311_y}  represent the linear combination   $$M\nabla (e^{y\sqrt{q_0^2+\xi/(2\mu)}}\cos q_0x)+N\nabla^\perp (e^{y\sqrt{q_0^2+\xi/\mu}}\sin q_0x)$$ of vector functions satisfying  the homogeneous 
equation $\mu(\Delta \,\cdot\,+\nabla {\rm div}\,\cdot\,)-\xi \,\cdot\,=0$, and coefficients $M$ and $N$ are found from the boundary conditions \eqref{bound_cond_v311first}--\eqref{bound_cond_v311second},
\begin{align}
&M =-\textstyle{\frac{1}{D(q_0,\mu,\xi)}}\Bigl( 2\mu q_0 \sqrt{q_0^2+\textstyle\frac{\xi}{\mu}}\,\tilde Q+ (2\mu q_0^2+\xi) \,\tilde R      \Bigr),
\\
&N=\textstyle{\frac{1}{D(q_0,\mu,\xi)}}\Bigl( (2\mu q_0^2+\xi)\,\tilde Q+ 2\mu q_0 \textstyle{\sqrt{q_0^2+\frac{\xi}{2\mu}}} \,\tilde R \Bigr),
\end{align}
where
\begin{align}
\tilde Q&= Q+\mu q_0 \bigl( I_y^{(1)} + I_y^{(2)} +I_y^{(3)}\bigr)\\
&-\mu\Bigl( 
I_x^{(1)} +\textstyle{\sqrt{q_0^2+1}}\bigl( I_x^{(1)} +I_x^{(2)} + I_x^{(3)}\bigr)    
+\textstyle{\sqrt{4q_0^2+1}}
I_x^{(3)} \Bigr),\notag
\end{align}
\begin{align}
\tilde R
= R    
-2\mu
\Bigl( 
I_y^{(1)} +\textstyle{\sqrt{q_0^2+1}}\bigl( I_y^{(1)} +I_y^{(2)} + I_y^{(3)}\bigr) 
+\textstyle{\sqrt{4q_0^2+1}}
I_y^{(3)} \Bigr),\notag
\end{align}
and $Q$, $R$ are obtained below. 


\section{Boundary conditions for 
${\bf v}^{(311)}$} \label{finish}
In order to establish the coefficient $Q$ in \eqref{bound_cond_v311first} we consider each term in the right-hand side of the first boundary condition of \eqref{eto_est_predposlednii_cond} and collect coefficients in front of $\sin q_0x$. 

Equation \eqref{39} yields
\begin{align}
2\mu\partial^2_{xy}v_x^{(1)}\rho^{(1)}[\rho^{(1)}]^\prime=-\textstyle{\frac{\zeta q_0^3}{4}}\sin q_0x	+C\sin 3q_0x.
\label{bratskayamogila_nachalo}
\end{align}
Hereafter $C$ denotes a generic constant whose value may possibly change from line to line.
Next, by \eqref{pervayatrudnyashtuka} we have
\begin{align}
&2\mu \partial_x v_x^{(1)}[\rho^{(2)}]^\prime\\
&=-2\mu \beta q_0\Bigl(D_1+ \textstyle{\frac{\zeta\xi}{2\mu\left(4\mu+\sqrt{2\mu\xi}\right)}}-\textstyle{\frac{\zeta_i}{\sqrt{2\mu\xi}+2\mu}} +\textstyle{\frac{2\zeta +\gamma q_0^2}{2\mu}}\Bigr)\sin q_0x  +C\sin 3q_0x,\nonumber
\end{align}
and \eqref{bezymyashka} entails 
\begin{align}
2\mu \partial_x v_x^{(2)}[\rho^{(1)}]^\prime&=\textstyle{\frac{q_0}{2}}\Bigl(\Biggr. 
\textstyle{\frac{D_2}{\sqrt{4 q_0^2+{ \textstyle\frac{\xi}{2\mu}}}}} -
\textstyle{\frac{\zeta}{4}}
\Bigl( 3q_0^2+\textstyle{\frac{\xi}{2\mu}}+4\sqrt{q_0^2+1} -4\beta\bigl( \textstyle{\frac{\xi}{4\mu+\sqrt{2\mu \xi}}}+2\bigr)\Bigr)\notag\\
&+4\gamma q_0^2 \beta -\zeta_i  
\Bigl( \textstyle{\frac{\beta\sqrt{2\mu}}{\sqrt{\xi}+\sqrt{2\mu}}}-\textstyle{\frac{1}{4}} \Bigr)	\Biggl. \Bigr) \sin q_0 x +C\sin 3q_0x.
\label{ya_est_partial_x_v_x}
\end{align}   
Then considering \eqref{pervoedlyapervogo} for $y=0$ and using \eqref{pervayatrudnyashtuka} we get 
\begin{align}
\label{number136}
\mu&\bigl(\partial^2_{xy}v_y^{(1)}+\partial^2_{yy}v_x^{(1)}\bigr)\rho^{(2)}
=-\textstyle{\frac{\beta}{2}}\sin q_0x\biggl(q_0\zeta\bigl(1+\textstyle{\sqrt{q_0^2+1}}\bigr)-q_0\zeta_i
\biggr. \nonumber\\
&+2\mu q_0 \Bigl(D_1+ \textstyle{\frac{\zeta\xi}{2\mu\left(4\mu+\sqrt{2\mu\xi}\right)}}-
\textstyle{\frac{\zeta_i}{\sqrt{2\mu\xi}+2\mu}} +
\textstyle{\frac{2\zeta +\gamma q_0^2}{2\mu}}\Bigr)
\\
&\biggl.+\textstyle{\frac{\xi}{q_0}}	\Bigl(D_1+ \textstyle{\frac{\zeta\xi}{2\mu\left(4\mu+\sqrt{2\mu\xi}\right)}}-
\textstyle{\frac{\zeta_i}{\sqrt{2\mu\xi}+2\mu}} +
\textstyle{\frac{2\zeta +\gamma q_0^2}{2\mu}}\Bigr)
\biggr) +C\sin 3q_0x. \nonumber
\end{align}
Differentiating	\eqref{pervoedlyapervogo} in $y$ and substituting \eqref{40} we obtain, for $y=0$ 
\begin{align}
-\textstyle{\frac{\mu}{2}}&\bigl(\partial^3_{xyy}v_y^{(1)}+\partial^3_{yyy}v_x^{(1)}\bigr)[\rho^{(1)}]^2 \\
&=\textstyle{\frac{q_0}{8}}\sin q_0x \Bigl(\zeta_i \textstyle{\sqrt{q_0^2+1}}-
\textstyle{\frac{\zeta\xi }{2\mu}}
-2\zeta \bigl(q_0^2+\textstyle{\sqrt{q_0^2+1}}+1\bigr)\Bigr)+C\sin 3q_0 x. \nonumber
\end{align}
Similarly to \eqref{number136} using equation \eqref{strashni_rivnyannya_dlya_v^2_1} and \eqref{bezymyashka} we get
\begin{align}
-\mu &\bigl(\partial^2_{xy}v_y^{(2)}+\partial^2_{yy}v_x^{(2)}\bigr)\rho^{(1)}=
-\textstyle{\frac{\zeta q_0}{2}}\bigl(q_0^2-\sqrt{q_0^2+1}\bigr) \sin q_0x\notag\\ 
&+
\textstyle{\frac{\zeta q_0}{4}} \bigl( \sqrt{q_0^2+1}-4\beta\bigr) \bigl(1+\sqrt{4q_0^2+1}\bigr)\sin q_0x-\textstyle{\frac{\zeta_i q_0}{4}}\bigl(\sqrt{q_0^2+1} -4\beta\bigr)\sin q_0x    \nonumber\\ 
&+\textstyle{\frac{-8\mu q_0^2 -\xi}{8\mu q_0 }}\Bigl( \Bigr.
\textstyle{\frac{D_2}{\sqrt{4 q_0^2+{ \textstyle\frac{\xi}{2\mu}}}}} -
\textstyle{\frac{\zeta}{4}}
\Bigl( 3q_0^2+\textstyle{\frac{\xi}{2\mu}}+4\sqrt{q_0^2+1} -4\beta\Bigl( \textstyle{\frac{\xi}{4\mu+\sqrt{2\mu \xi}}}+2\Bigr)\Bigr)
\notag
\\ 	&\quad\quad\quad\quad\quad+4\gamma q_0^2 \beta -\zeta_i  
\Bigl( \textstyle{\frac{\beta\sqrt{2\mu}}{\sqrt{\xi}+\sqrt{2\mu}}}-\textstyle{\frac{1}{4}} \Bigr)	\Bigl.\Bigr) \sin  q_0 x +C\sin 3q_0 x.
\end{align}     
Finally,
\begin{align}
-\gamma&[\rho^{(1)}]^{\prime\prime}[\rho^{(2)}]^\prime
-\gamma[\rho^{(2)}]^{\prime\prime}[\rho^{(1)}]^\prime=\beta\gamma q_0^3\sin q_0x+ C\sin 3q_0x.
\label{bratskayamogila_konets}
\end{align}
Thus combining  \eqref{bratskayamogila_nachalo}--\eqref{bratskayamogila_konets} we have
\begin{align}
Q=& 	(\beta\gamma -\zeta) q_0^3+
\frac{\zeta q_0}{4} \bigl( \textstyle{\sqrt{q_0^2+1}}-4\beta\bigr) \bigl(1+\textstyle{\sqrt{4q_0^2+1}}\bigr)
+\frac{\zeta q_0}{4}\textstyle{\sqrt{q_0^2+1}}
\nonumber\\
&
-\beta \bigl(\mu  q_0-\textstyle{\frac{\xi}{2q_0}}\bigr)  \Bigl(D_1+ \textstyle{\frac{\zeta\xi}{2\mu\left(4\mu+\sqrt{2\mu\xi}\right)}}-
\textstyle{\frac{\zeta_i}{\sqrt{2\mu\xi}+2\mu}} +
\textstyle{\frac{2\zeta +\gamma q_0^2}{2\mu}}\Bigr)
\nonumber\\
& +\textstyle{\frac{\beta}{2}}\Bigl(q_0\zeta\bigl(1+\textstyle{\sqrt{q_0^2+1}}\bigr)+q_0\zeta_i
\Bigr)-\textstyle{\frac{q_0}{8}}\Bigl(\zeta_i \textstyle{\sqrt{q_0^2+1}}+
\textstyle{\frac{\zeta\xi }{2\mu}}+2\zeta \Bigr)	\nonumber\\
&\left. 
-\textstyle{\frac{4\mu q_0^2 +\xi}{8\mu q_0 }}\Biggl( 
\textstyle{\frac{D_2}{\sqrt{4 q_0^2+{ \textstyle\frac{\xi}{2\mu}}}}}
+4\gamma q_0^2 \beta -\zeta_i  
\Bigl(\textstyle{ \frac{\beta\sqrt{2\mu}}{\sqrt{\xi}+\sqrt{2\mu}}}-\textstyle{\frac{1}{4}} \Bigr)  \Bigr.\right.\nonumber\\
&\Bigl.-
\textstyle{\frac{\zeta}{4}}
\left( 3q_0^2+\frac{\xi}{2\mu}+4\textstyle{\sqrt{q_0^2+1}} -4\beta\Bigl( \textstyle{\frac{\xi}{4\mu+\sqrt{2\mu \xi}}}+2\Bigr)\right)
\Biggr).
\end{align}

Next we consider \eqref{eto_est_poslednii_cond}  and establish that  
\begin{align}
\label{pidrahuvaly2}
2\mu \partial_y v_y^{(3)}= b\bigl( \gamma q_0^2 -2\zeta +\zeta_i -\xi V^{(0)} \bigr)  \cos q_0 x   + R \cos q_0 x + C\cos 3q_0 x, 
\end{align}
where 
\begin{align}	R&= -2\beta \zeta q_0^2 -\textstyle{\frac{\zeta_i \beta}{2}}-\textstyle{\frac{\zeta_i}{4}}+\Bigl(\textstyle{\frac{\xi}{8}} -\textstyle{\frac{3\mu q^2_0}{4}}\Bigr)D_1 -\textstyle{\frac{3\mu q^2_0}{4}}	\Bigl(\textstyle{\frac{\zeta\xi}{2\mu\left(4\mu+\sqrt{2\mu\xi}\right)}}-
\textstyle{\frac{\zeta_i}{\sqrt{2\mu\xi}+2\mu}} \Bigr)
\nonumber\\
& 	+\beta\zeta\bigl(\textstyle{\sqrt{q_0^2+1}}+ \textstyle{\sqrt{4q_0^2+1}}\bigr)
+ \textstyle{\frac{3\zeta}{2}}-2\zeta\textstyle{\sqrt{q_0^2+1}} 
-\xi \Bigl(V^{(21)} -\textstyle{\frac{\zeta\beta}{4\mu}}\Bigr)-\textstyle{\frac{\xi\gamma q_0^2}{8\mu}}
\nonumber\\
&-\textstyle{\frac{\zeta}{4}}\bigl(q_0^2+2\textstyle{\sqrt{q_0^2+1}}-1 \bigr)\bigl(1+\textstyle{\sqrt{4q_0^2+1}}\bigr)+\textstyle{\frac{3\zeta_i}{8}}\textstyle{\sqrt{q_0^2+1}}
-\textstyle{\frac{1}{4}}\gamma q_0^4 +\zeta q_0^2. 
\label{pidrahuvaly2iOfigeli} 
\end{align}
For the first term in the right hand side of  \eqref{eto_est_poslednii_cond}  we have, by \eqref{bc1},
\begin{align}
\mu \bigl(\partial_xv_y^{(1)}+\partial_yv_x^{(1)}\bigr)[\rho^{(2)}]^\prime=
-\beta \zeta q_0^2 \cos q_0x +C\cos 3q_0x.
\label{nefuh}
\end{align}
Next two terms are transformed as follows
\begin{align}
\mu\bigl(&\partial_xv_y^{(2)}+\partial_yv_x^{(2)}\bigr)[\rho^{(1)}]^\prime
+\mu \bigl(\partial^2_{xy}v_y^{(1)}+\partial^2_{yy}v_x^{(1)}\bigr)\rho^{(1)}[\rho^{(1)}]^\prime=2\mu \partial_x v_x^{(1)} {[\rho^{(1)}]^\prime}^2\nonumber\\
&-\zeta [\rho^{(2)}]^\prime [\rho^{(1)}]^\prime -\gamma  {[\rho^{(1)}]^\prime}^2[\rho^{(1)}]^{\prime\prime}=\bigl(\textstyle{\frac{\gamma q_0^4}{4}}-\zeta\beta q_0^2\bigr)\cos q_0 x\nonumber\\
&+\textstyle{\frac{\mu q_0^2}{2}} \Bigl(D_1+ \textstyle{\frac{\zeta\xi}{2\mu\left(4\mu+\sqrt{2\mu\xi}\right)}}-
\textstyle{\frac{\zeta_i}{\sqrt{2\mu\xi}+2\mu}} +
\textstyle{\frac{2\zeta +\gamma q_0^2}{2\mu}}\Bigr)\cos q_0 x +C\cos 3q_0x,
\label{garna_f}
\end{align} 
where we have used   \eqref{v^2_boundarycondition_first}  and  \eqref{pervayatrudnyashtuka}.
With the help of  \eqref{UrForVy}  we find, for  $y=0$
\begin{align}
&-2\mu \partial^3_{yyy}V_y \rho^{(1)}\rho^{(2)}
-\textstyle{\frac{\mu}{3}}  \partial^4_{yyyy}V_y [\rho^{(1)}]^3=\cos q_0x \Bigl(
\textstyle{\frac{\beta\zeta_i}{2}}-2\beta\zeta-\zeta+\textstyle{\frac{\zeta_i}{8}}
\Bigr. \nonumber\\
&\quad\quad\quad\quad\quad\Bigl.-\textstyle{\frac{\beta\zeta\xi}{4\mu}}-\textstyle{\frac{\zeta\xi}{8\mu}}
-\textstyle{\frac{\zeta\xi^2}{16\mu\left(4\mu+\sqrt{2\mu\xi}\right)}}
+\textstyle{\frac{\zeta_i \xi}{8(\sqrt{2\mu\xi}+2\mu)}} 
\Bigr) +C\cos 3q_0x.\quad\quad
\end{align}
It follows from \eqref{40} that
\begin{align}
-2\mu \partial^2_{yy}v_y^{(1)} \rho^{(2)}=\cos q_0x \Bigl(
\beta\zeta\bigl(1+\textstyle{\sqrt{q_0^2+1}}+\textstyle{\frac{\xi}{4\mu}}\bigr)
-\textstyle{\frac{\beta\zeta_i}{2}}\Bigr)
+C\cos 3q_0x.
\end{align}
Differentiating \eqref{pervoedlyapervogo} in $x$ and subtracting the derivative of \eqref{vtoroedlyapervogo} in $y$, setting $y=0$, and using \eqref{bc2}, \eqref{pervayatrudnyashtuka} we get
\begin{align}
-\mu \partial^3_{yyy}v_y^{(1)}[\rho^{(1)}]^2&=-\textstyle{\frac{3}{8}}\cos q_0x \Bigl(
\zeta_i\bigl(q_0 ^2+\sqrt{q_0^2+1}\bigr)-(2\mu q_0^2+\xi)D_1 \Bigr. \nonumber\\
& -2(\mu q_0^2+\xi) \Bigl( \textstyle{\frac{\zeta\xi}{2\mu\left(4\mu+\sqrt{2\mu\xi}\right)}}
-\textstyle{\frac{\zeta_i}{\sqrt{2\mu\xi}+2\mu}} +
\textstyle{\frac{2\zeta +\gamma q_0^2}{2\mu}}\Bigr)\quad\quad\quad\quad
\nonumber\\
&	 -2\zeta\bigl(1+\textstyle{\sqrt{q_0^2+1}}\bigr)^2\Bigl.\Bigr) +C\cos 3q_0x.
\end{align}
Since $2\mu \partial^2_{yy}v_y^{(2)} =\mu (\Delta v_y^{(2)} +\partial _y {\rm div} {\bf v}^{(2)})-\mu \partial_x (\partial_x v_y^{(2)}+\partial_y v_x^{(2)})$ we 
derive with the help of \eqref{strashni_rivnyannya_dlya_v^2_2}--\eqref{veselytsya_i_psihue_ves_narod2}  that 
\begin{align}
&2\mu \partial^2_{yy}v_y^{(2)} \rho^{(1)}
=\biggl(\biggr.
\bigl(2\mu q_0^2+\textstyle{\frac{\xi}{4}}\bigr) D_1  +\xi \bigl(V^{(2)}+2b V^{(0)} -\textstyle{\frac{\zeta\beta}{4\mu}}+\textstyle{\frac{5\zeta}{8\mu}}\bigr)
\notag\\
&\quad+
\bigl(2\mu q_0^2+\textstyle{\frac{5\xi}{8}}\bigr)\Bigl(
\textstyle{\frac{\zeta\xi}{2\mu\left(4\mu+\sqrt{2\mu\xi}\right)}}
-\textstyle{\frac{\zeta_i}{\sqrt{2\mu\xi}+2\mu}}\Bigr)+\bigl(3\mu q_0^2+\xi\bigr) \textstyle{\frac{\gamma q_0^2}{2\mu}}
+\zeta_i\bigl(\textstyle{\frac{\beta}{2}}-b\bigr)\quad\quad
\nonumber		\\
&\quad+\textstyle{\frac{\zeta}{4}}\Bigl(\bigl(q_0^2+2\sqrt{q_0^2+1}-1 -4\beta\bigr)\bigl(1+\sqrt{4q_0^2+1}\bigr)-14\sqrt{q_0^2+1}-4q_0^2\Bigr)\nonumber		\\
&\quad+\zeta(2b-1)-\textstyle{\frac{3\zeta_i}{8}}\bigl(q_0^2+2\sqrt{q_0^2+1}-1\bigr)\biggl.\biggr)\cos q_0x
+C\cos 3q_0x. 
\end{align}
Finally, we have 
\begin{align}
-\gamma b[\rho^{(1)}]''-\textstyle{\frac{3}{2}}\gamma {[\rho^{(1)}]^\prime}^2[\rho^{(1)}]''=
\cos q_0x \Bigl(b\gamma q_0^2 +\textstyle{\frac{3}{8}}\gamma q_0^4 \Bigr)+C\cos 3q_0x.
\label{fuh}
\end{align}
Combining \eqref{nefuh}--\eqref{fuh} (and taking into account \eqref{bagatoV}) we obtain \eqref{pidrahuvaly2}  with $R$ given by \eqref{pidrahuvaly2iOfigeli} .

\section{The equation for the coefficient $b$}
\label{finish_nutochnovzhe}

Now we consider the boundary condition \eqref{fse} and find the coefficient $b$ which determines the bifurcation type.
Since 
$2\mu \partial_y v^{(1)}_y=-2\mu\partial^2_{yy}V_y\rho^{(1)}+\gamma [\rho^{(1)}]^{\prime\prime}$ for $y=0$, we have
$$
\partial^2_{yy}V_y \rho^{(1)}\rho^{(2)}+\partial_yv_y^{(1)}\rho^{(2)}
=-\textstyle{\frac{\beta \gamma q_0^2}{4\mu}} \cos q_0 x +C\cos 3q_0x.
$$
By \eqref{40}, \eqref{before_bezymyashka}--\eqref{bezymyashka} and  \eqref{pervayatrudnyashtuka_v_x} we get, correspondingly,
\begin{align*}
\textstyle{\frac{1}{2}}\partial_{yy}^2 v_y^{(1)}[\rho^{(1)}]^2=
\textstyle{\frac{3}{16\mu}}
\left(\zeta_i-2\zeta\bigl(\textstyle{\sqrt{q_0^2+1}}+1+\textstyle{\frac{\xi}{4\mu}}\bigr)\right)\cos q_0x+C\cos 3q_0x,
\end{align*}
\begin{align*}
\partial_y v^{(2)}_y\rho^{(1)}&-v_x^{(2)}{\rho^{(1)}}^\prime=\textstyle{\frac{1}{2\mu}}\Bigl( -\textstyle{\frac{3}{4}}\zeta q_0^2 -\textstyle{\frac{\zeta_i}{4}} +\zeta \textstyle{\sqrt{q_0^2+1}}+\textstyle{\frac{\zeta\xi}{8\mu}} \Bigr)
\cos q_0 x  \\
&+\textstyle{\frac{1}{8\mu}}\Bigl(
\frac{D_2}{\sqrt{4 q_0^2+{ \textstyle\frac{\xi}{2\mu}}}} +
\frac{
3\zeta}{4}
q_0^2+\frac{\zeta\xi}{8\mu}+\zeta\sqrt{q_0^2+1} -4\gamma q_0^2 \beta   -\frac{\zeta_i}{4}\Bigr.
\\
&\Bigl.
-2\mu \beta \Bigl(\textstyle{\frac{\xi\zeta}{2\mu(4\mu+\sqrt{2\mu \xi})}} -
\frac{\zeta_i}{\sqrt{2\mu\xi}+2\mu}+\frac{\zeta}{\mu}
\Bigr)
\Bigr)\cos q_0 x+C\cos 3q_0x,
\end{align*}
and
$$
- v_x^{(1)} [\rho^{(2)}]^\prime =\beta 
\Bigl(D_1+ \textstyle{\frac{\zeta\xi}{2\mu(4\mu+\sqrt{2\mu\xi})}}-\frac{\zeta_i}{\sqrt{2\mu\xi}+2\mu} +\frac{2\zeta +\gamma q_0^2}{2\mu}\Bigr)\cos q_0 x+C\cos 3q_0x.
$$
Using \eqref{UrForVy} we obtain
\begin{align*}
\textstyle{\frac{1}{6}}\partial_{yyy}^3 V_y [\rho^{(1)}]^3=
\frac{1}{16\mu}\Bigl( \frac{\zeta\xi}{2\mu} +4\zeta -\zeta_i\Bigr)\cos q_0 x +C\cos 3q_0x,
\end{align*}
and from \eqref{39} we have
$$
-\partial_y v_x^{(1)}\rho^{(1)}[\rho^{(1)}]^\prime =\frac{\zeta q_0^2}{8\mu} \cos q_0 x+C\cos 3q_0x.
$$
Substituting the expressions obtained above into \eqref{fse}, combining all the terms with $\cos q_0x$, and taking into account  \eqref{pokraschennya} we get the relation \eqref{ReLaTiOn}.
Finally, using \eqref{v311_x}--\eqref{v311_y} we obtain 
\begin{align}
\label{alligator}
&-bq_0 \partial_q \Lambda(q_0)=\textstyle{\frac{D(q_0)+2\mu\xi q_0^2+\xi^2}{2\mu q_0 D(q_0)}}Q +\frac{\xi \sqrt{q_0^2+\textstyle\frac{\xi}{2\mu}}}{D(q_0)}R-I_y^{(123)}
\notag\\ 
&\quad \quad +	\textstyle{\frac{D(q_0)+2\mu\xi q_0^2+\xi^2}{2q_0 D(q_0)}}\Bigl(
q_0 I_y^{(123)}\Bigr. 
-I_x^{(1)} 	
-\textstyle{\sqrt{q_0^2+1}}  I_x^{(123)}- 
\sqrt{4q_0^2+1} I_x^{(3)}  \Bigr)
\notag\\ 
&\quad \quad
-\textstyle{\frac{1}{4\mu}}\Bigl(\zeta \sqrt{q_0^2+1} -\frac{\zeta}{2} -\frac{\zeta_i}{8}+\frac{\zeta\xi}{16\mu}-\frac{5\zeta q_0^2}{8}-\beta \gamma q_0^2\Bigr)	\\
&\quad \quad-\textstyle{\frac{2 \mu \xi  \sqrt{q_0^2+\textstyle\frac{\xi}{2\mu}}}{D(q_0)}} 
\Bigl(I_y^{(1)}+\sqrt{q_0^2+1} I_y^{(123)} +\sqrt{4q_0^2+1} I_y^{(3)}  \Bigr)\notag 
\\
&\quad \quad-\beta D_1-\textstyle{\frac{D_2}{8\mu\sqrt{4 q_0^2+{ \textstyle\frac{\xi}{2\mu}}}}}-\frac{3\beta}{4}\Bigl(\frac{\xi\zeta}{2\mu(4\mu+\sqrt{2\mu \xi})} -
\frac{\zeta_i}{\sqrt{2\mu\xi}+2\mu}+\frac{\zeta}{\mu}
\Bigr),\notag
\end{align}
where $I_x^{(123)}=I_x^{(1)} +  I_x^{(2)} +  I_x^{(3)}$ and $I_y^{(123)}=I_y^{(1)} +  I_y^{(2)} +  I_y^{(3)}$.

\end{appendices}


\begin{thebibliography}{30}
	\bibitem{Agmon1964} S. Agmon, A. Douglis, and L. Nirenberg, Estimates near the boundary for solutions of elliptic partial differential equations satisfying general boundary conditions II, Comm. Pure Appl. Math. 17 (1964) 35--92.
	
	\bibitem{AleBlaCas2019} R. Alert, C. Blanch-Mercader, J. Casademunt, Active fingering instability in tissue spreading, Phys. rev. lett. 122(8) (2019) 088104.
	
	\bibitem{Ryba2018} L. Berlyand, J. Fuhrmann and V. Rybalko, Bifurcation of traveling waves in a Keller-Segel type free boundary model of cell motility, Commun. Math. Sci. 
	16(3) (2018) 735--762. https://doi.org/10.4310/CMS.2018.v16.n3.a5
	
	\bibitem{BlaMerCas2013} C. Blanch-Mercader and J. Casademunt, Spontaneous motility of actin lamellar fragments, Phys. rev. lett. 110(7) (2013) 078102. https://doi.org/10.1103/PhysRevLett.110.078102
	
	
	

	
	
	
	
	\bibitem{Casa2004} J.~Casademunt, Viscous fingering as a paradigm of interfacial pattern formation: recent results and new challenges, Chaos, 14(3) (2004) 809-24.  https://doi: 10.1063/1.1784931
	
	\bibitem{CranRab} M. G. Crandall, P. H. Rabinowitz, Bifurcation from simple eigenvalues, J. Funct. Anal. 8 (1971) 321--340.
	
	\bibitem{CranRab73}	M. G. Crandall, P. H. Rabinowitz, Bifurcation, perturbation of simple eigenvalues, itand linearized stability, Arch. Rational Mech. Anal. 52 (1973) 161--180. https://doi.org/10.1007/BF00282325
	
	
	\bibitem{Cucchi2022} A. Cucchi, A. Mellet, N. Meunier, Self polarization and traveling wave in a model for cell crawling migration, Discrete Contin. Dyn. Syst.  42(5) (2022) 2381--2407. doi: 10.3934/dcds.2021194 
	
	
	\bibitem{Friedman2001} A. Friedman and F. Reitich, Symmetry-breaking bifurcation of analytic solutions to free boundary problems: an application to a model of tumor growth,	Trans. AMS 353 (2001) 1587--1634.
	
	\bibitem{Friedman2004} A. Friedman, A Hierarchy of Cancer Models and their Mathematical Challenges, Discrete Contin. Dyn. Syst. - Ser. B. 4(1) (2004) 147--159.
	
	
	\bibitem{FriedmanHu2006} A. Friedman and B. Hu, Bifurcation From Stability to Instability for a Free Boundary Problem Arising in a Tumor Model, Arch. Rational Mech. Anal. 180 (2006) 293--330. https://doi.org/10.1007/s00205-005-0408-z
	
	\bibitem{Gollub1999} J. P. Gollub and J. S. Langer, Pattern formation in nonequilibrium physics, Rev. Mod. Phys. 71(2) (1999) 396--403 doi:https://doi.org/10.1103/RevModPhys.71.S396
	
	\bibitem{GuVas2006} B. Gustafsson, A. Vasil'ev. Conformal and potential analysis in Hele-Shaw cells, Springer Science \& Business Media, 2006.
	
	
	\bibitem{Lavi2020} I. Lavi, N. Meunier, R. Voituriez and J. Casademunt, Motility and morphodynamics of confined cells, Phys. Rev. E.  101.2 (2020) 022404. doi: 10.1103/PhysRevE.101.022404. PMID: 32168566.
	
	
	\bibitem{LiHeZhaJia20213} L. Li, Y. He, M. Zhao, and J. Jiang, Collective cell migration: Implications for wound healing and cancer invasion, Burns \& trauma, 1(1) (2013) 2321--3868.
	
	\bibitem{Ryba2016} M. S. Mizuhara, L. Berlyand, V. Rybalko and L. Zhang, On an evolution equation in a cell motility model, Physica D, 318 (2016) 12--25.
	
	
	\bibitem{MorMor21} L. C. Morrow, T. J. Moroney, M. C. Dallaston, S. W. McCue, A review of one-phase Hele-Shaw flows and a level-set method for nonstandard configurations, ANZIAM J. 63(3) (2021) 269--307.
	
	\bibitem{OmeVasGelFedBon2003} T. Omelchenko, J.M. Vasiliev, I.M. Gelfand, H.H. Feder, and E.M. Bonder, Rho-dependent formation of epithelial "leader" cells during wound healing, Proc. Natl. Acad. Sci. U.S.A. 100(19) (2003) 10788--10793.
	
	\bibitem{PouGraHerJouChaLadBugSil2007} M. Poujade, E. Grasland-Mongrain, A. Hertzog, J. Jouanneau, P. Chavrier, B. Ladoux, A. Buguin, P. Silberzan, Collective migration of an epithelial monolayer in response to a model wound, Proc. Natl. Acad. Sci. U.S.A. 104(41) (2007) 15988--15993.
	
	\bibitem{Prost2015} J. Prost, F. J{\" u}licher, J. F. Joanny, Active gel physics, Nature physics, 11(2) (2015) 111--117.
	
	\bibitem{Ryba2023} V. Rybalko, L. Berlyand, Emergence of traveling waves and their stability in a free boundary model of cell motility, Transactions of the American Mathematical Society, 376(3) (2023) 1799--1844.
	
	\bibitem{SafTay1958} P. G. Saffman, G. I. Taylor. The penetration of a fluid into a porous medium or Hele-Shaw cell containing a more viscous liquid. Proc. R. Soc. London, Ser. A., Mathematical and Physical Sciences, 245(1242) (1958) 312--329. 
	
	\bibitem{TreBon2021} C. Trenado, L. L.  Bonilla, A. Martinez-Calvo. Fingering instability in spreading epithelial monolayers: roles of cell polarisation, substrate friction and contractile stresses, Soft Matter 17(36) (2021) 8276--8290.
	
	\bibitem{Vish2018} M. Vishwakarma, J. Di Russo, D. Probst, et al., Mechanical interactions among followers determine the emergence of leaders in migrating epithelial cell collectives, Nat. Commun. Aug 27; 9(1) (2018) 3469. https://doi.org/10.1038/s41467-018-05927-6
	
	
\end{thebibliography}

\end{document}